\newtheorem{thm}{Theorem}[section]
\newtheorem*{thm*}{Theorem}
\newtheorem{lem}[thm]{Lemma}
\newtheorem{prop}[thm]{Proposition}
\newtheorem{cor}[thm]{Corollary}
\theoremstyle{definition}
\newtheorem{rmk}[thm]{Remark}
\newtheorem{defn}[thm]{Definition}
\newtheorem{ex}[thm]{Example}
\newtheorem{conj}[thm]{Conjecture}
\numberwithin{equation}{section}
\newcommand{\mr}{\mathrm}
\newcommand{\wt}{\widetilde}
\newcommand{\Res}{\operatorname{Res}}
\newcommand{\Tor}{\operatorname{Tor}}
\newcommand{\Supp}{\operatorname{Supp}}
\newcommand{\Int}{\operatorname{Int}}
\newcommand{\Aut}{\operatorname{Aut}}
\newcommand{\Hom}{\operatorname{Hom}}
\newcommand{\Ext}{\operatorname{Ext}}
\newcommand{\End}{\operatorname{End}}
\newcommand{\Lie}{\operatorname{Lie}}
\newcommand{\Spec}{\operatorname{Spec}}
\newcommand{\Spf}{\operatorname{Spf}}
\newcommand{\codim}{\operatorname{codim}}
\newcommand{\GL}{\operatorname{GL}}
\newcommand{\Stab}{\operatorname{Stab}}
\newcommand{\Inv}{\mathrm{Inv}}
\newcommand{\tensor}{\otimes}
\newcommand{\iso}{\cong}
\newcommand{\mbC}{\mathbb{C}}
\newcommand{\mbF}{\mathbb{F}}
\newcommand{\mbQ}{\mathbb{Q}}
\newcommand{\mbR}{\mathbb{R}}
\newcommand{\mbX}{\mathbb{X}}
\newcommand{\mbZ}{\mathbb{Z}}
\newcommand{\mcF}{\mathcal{F}}
\newcommand{\mcH}{\mathcal{H}}
\newcommand{\mcL}{\mathcal{L}}
\newcommand{\mcO}{\mathcal{O}}
\newcommand{\mcS}{\mathcal{S}}
\newcommand{\mcZ}{\mathcal{Z}}
\long\def\[#1]#2{ \begin{#1}#2\end{#1}}
\def\m{\@tut\@gobble\@tutt}
\def\@tut#1\@tutt#2{
\@ifnextchar,{\@tut #1 & #2\\\expandafter\expandafter\expandafter\@gobble\expandafter\@gobble\@gobble\@tutt}{
\@ifnextchar.{\begin{pmatrix}#1 &#2\end{pmatrix}\@gobble}{\@tut #1 & #2\@tutt}
}
}
\def\bm{\@tuut\@gobble\@tuutt}
\def\@tuut#1\@tuutt#2{
\@ifnextchar,{\@tuut #1 & #2\cr\expandafter\expandafter\expandafter\@gobble\expandafter\@gobble\@gobble\@tuutt}{
\@ifnextchar.{\bordermatrix{#1 &#2\cr}\@gobble}{\@tuut #1 & #2\@tuutt}
}
}
\newcommand{\del}{\operatorname{\partial Orb}}
\newcommand{\ov}{\overline}
\newcommand{\lr}{\longrightarrow}
\begin{document}

\title{On the Linear AFL: The Non-Basic Case}
\author{Qirui Li and Andreas Mihatsch}
\date{March 14, 2024}

\maketitle
%
\setlength{\parskip}{-1mm}
\tableofcontents
\setlength{\parskip}{0mm}

\newcommand{\charred}{\mr{charred}}
\newcommand{\Cent}{\mr{Cent}}
\newcommand{\back}{\backslash}
\newcommand{\std}{\mr{std}}
\newcommand{\ord}{\mr{ord}}
\newcommand{\simto}{\overset{\sim}{\to}}
\newcommand{\lsimto}{\overset{\sim}{\lr}}
\newcommand{\Ha}{\mr{Ha}}

\section{Introduction}
\label{s:intro}
The purpose of this article is to formulate a linear arithmetic fundamental lemma conjecture also for non-basic isogeny classes. Our main result is a reduction of these non-basic cases to the basic one. The context and relevance of these results are as follows.

In general, arithmetic fundamental lemmas are certain identities of intersection numbers on moduli spaces of $p$-divisible groups (RZ spaces) and derivatives of orbital integrals. They are, in particular, local statements, formulated over a $p$-adic local field. Their motivation is global, however, namely they arise from a relative trace formula comparison approach to certain intersection problems of special cycles on Shimura varieties. We refer to the original paper of W. Zhang \cite{Z} or his surveys \cite{Z_survey_12, Z_survey_18} for an explanation of this global-to-local formalism. 

There are currently two families of such conjectures. The first comes from the Gan--Gross--Prasad setting of a diagonal embedding of unitary groups $U(V)\to U(V)\times U(V\oplus 1)$. The original conjecture in this context \cite{Z} has been proved in work of W. Zhang \cite{Z19}, of W. Zhang and the second author \cite{M_loc_const, MZ} and of Z. Zhang \cite{ZZ}. Several variants have been formulated by Rapoport--Smithling--Zhang \cite{RSZ1, RSZ2, RSZ3}, Y. Liu \cite{Liu}, W. Zhang \cite{Z_Bessel}, and Z. Zhang \cite{ZZ}. In all these cases, the global situation is such that intersection takes place in the basic locus. Correspondingly, all the cited works are concerned with basic RZ spaces.

The second family arises from the unitary variant of the Guo--Jacquet restriction problem \cite{Guo}. That is, the special cycles in question come from an embedding $U_E(V)\to U_F(V)$, where $V$ is a hermitian space for some field extension $F/F_0$ with an additional action of a quadratic extension $E/F$. At inert places of $F/F_0$, this leads to an AFL for basic unitary RZ spaces that is the content of the forthcoming work of Leslie--Xiao--Zhang \cite{LXZ}. At a split place $v$ however, it leads to intersection problems on RZ spaces for all isogeny classes of dimension $1$ and height $2n$ strict $p$-divisible $O_{F_v}$-modules with $O_{E_v}$-action. These are the non-basic linear AFLs from the title of the paper. We mention that the phenomenon of non-basic intersection at split places also occurs for unitary groups over central simple algebras and refer to \cite{HM} and \cite{Li_Mih} for intersection number identities in such situations.

We now describe our results in some detail. They also pertain to the function field setting. We do not exclude the prime $2$.

Let $E/F$ be an unramified quadratic extension of a non-archimedean local field $F$ with uniformizer $π\in F$. Denote by $\breve F$ the completion of a maximal unramified extension of $F$ and fix an embedding $E\to \breve F$. Let $\mbX$ be a $1$-dimensional, height $2n$, not necessarily formal, strict $π$-divisible $O_F$-module over the residue field $\mbF$ of $\breve F$ (cf. Def. \ref{def:strict_O_F_module}). Additionally, consider a pair of embeddings
$$β = (β_1,β_2),\ \ \ β_i:E\lr \End^0(\mbX).$$
The $π$-divisible $O_F$-module $\mbX$ decomposes in an essentially unique way as $\mbX = \mbX^0\times \mbX^1$ with $\mbX^0$ connected and $\mbX^1$ étale. Since $\End^0(\mbX) = \End^0(\mbX^0)\times \End^0(\mbX^1)$, giving a pair $β$ is the same as giving analogous pairs $β^0,β^1$ for the two factors.

Consider now the RZ space $M$ of strict $π$-divisible $O_F$-modules with quasi-isogeny to $\mbX$. It is a formal scheme; abstractly
\begin{equation}
M \iso \coprod_{\mbZ\,\times\, (GL_{2n^1}(F)/GL_{2n^1}(O_F))}\Spf O_{\breve F}[\![t_1,\ldots,t_{2n-1}]\!],
\end{equation}
where $2n^1$ is the height of the étale factor $\mbX^1$. There are two analogous RZ spaces $Z_1, Z_2$ that parametrize strict $O_E$-modules together with an $O_E$-linear quasi-isogeny to $(\mbX, β_i)$. Forgetting about the $O_E$-action defines closed immersions $Z_1,Z_2\to M$ that allow to view them as cycles in middle dimension. Assuming that $(β_1,β_2)$ is regular semi-simple (cf. Def. \ref{def:reg_ss}), the centralizer of the image $β_1(E) \cup β_2(E)$ is an étale $F$-algebra $L \subseteq \End^0(\mbX)$ of degree $n$. Write $L = \prod_{j\in J} L^j$ as product of fields, pick uniformizers $π_j\in L^j$ and define $Γ = \prod_{j\in J} π_j^\mbZ \subset L^\times$. Then $L^\times$ acts compatibly on $Z_1, Z_2$ and $M$, and the quotient $Γ\backslash (Z_1 \cap Z_2)$ is artinian. We define
\begin{equation}
\mr{Int}(β) := \ell_{O_{\breve F}} (\mcO_{Γ\backslash (Z_1\cap Z_2)}).
\end{equation}
To $β$ one may also associate a matching pair (cf. Def. \ref{def:matching})
$$α = (α_0, α_3),\ \ \ α_i:F\times F \lr M_{2n}(F),$$
which is unique up to conjugation. Given an element $f\in \mcH = \mbC[GL_{2n}(O_F)\backslash GL_{2n}(F)/GL_{2n}(O_F)]$ of the spherical Hecke algebra, one may define an orbital integral for the $GL_n(F\times F)\times GL_n(F\times F)$-action on $GL_{2n}(F)$,
\begin{equation}
O(α, f, s),\ \ \ s\in \mbC.
\end{equation}
This is an element of $\mbC[q_F^s, q_F^{-s}]$, where $q_F$ is the residue cardinality of $F$. Under our assumption that $α$ matches $β$, the central value $O(α,f,0)$ vanishes.
\begin{conj}[Linear AFL]\label{conj:intro}
Assume $α$ and $β$ to be as above. Then there is an equality,
$$\left.\frac{1}{\log(q_F)}\frac{d}{ds}\right\vert_{s = 0} O(α, 1_{GL_{2n}(O_F)}, s) = \mr{Int}(β).$$
\end{conj}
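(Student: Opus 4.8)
The plan is to reduce Conjecture~\ref{conj:intro} to its \emph{basic} case, where $\mbX$ is connected, by means of the canonical splitting $\mbX = \mbX^0\times\mbX^1$. Since $\End^0(\mbX) = \End^0(\mbX^0)\times\End^0(\mbX^1)$, this splitting is compatible with all of the data and yields decompositions $\beta = (\beta^0,\beta^1)$, $\alpha = (\alpha^0,\alpha^1)$, $L = L^0\times L^1$ and $\Gamma = \Gamma^0\times\Gamma^1$; put $2n^0 := 2n - 2n^1$, the height of $\mbX^0$. The aim is to show that the étale part $\mbX^1$ contributes the same finite multiplicity to both sides of the asserted identity, so that only the basic datum $(\alpha^0,\beta^0)$ attached to $\mbX^0$ survives, and then to invoke the basic linear AFL for it.

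\textbf{Geometric side.} First I would analyse $M$, $Z_1$, $Z_2$ relative to the connected--étale filtration. A point of $M$ over a test $O_{\breve F}$-algebra $R$ is a deformation $\mbY$ of $\mbX_R$ with quasi-isogeny; pulling back the connected--étale filtration of $\mbX$ presents $\mbY$ as an extension $0\to\mbY^0\to\mbY\to\mbY^1\to 0$ with $\mbY^0$ connected (deforming $\mbX^0$), $\mbY^1$ étale and hence constant (deforming $\mbX^1$, so recording a point of the discrete space $M^1 \iso \mbZ\times\GL_{2n^1}(F)/\GL_{2n^1}(O_F)$), and an extension class accounting for the $2n^1 = (2n-1)-(2n^0-1)$ remaining deformation parameters. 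Imposing an $O_E$-structure matching $\beta_i$ forces $\mbY^0$ to lie in the basic cycle $Z_i^0\subset M^0$ (here $M^0$ is the basic RZ space of $\mbX^0$), the constant $\beta_i^1$-action on $\mbY^1$, and an $O_E$-equivariant extension class. I would then show that the artinian scheme $Z_1\cap Z_2$ is concentrated over the locus of $M^1$ on which the two constant étale $O_E$-structures admit a common stable lattice --- a Bruhat--Tits building condition on the relative position of two $O_E$-lattice chains, governed by the regular semisimplicity of $\beta^1$ --- and that the extension data is rigid along this (already zero-dimensional) intersection, so that the completed local ring of $Z_1\cap Z_2$ there coincides with that of $Z_1^0\cap Z_2^0$ inside $M^0$. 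Keeping track of the $\Gamma = \Gamma^0\times\Gamma^1$-action, this gives $\mr{Int}(\beta) = m^1(\beta^1)\cdot\mr{Int}(\beta^0)$, where $m^1(\beta^1) = \ell_{O_{\breve F}}(\mcO_{\Gamma^1\backslash(Z_1^1\cap Z_2^1)})$ is a positive integer depending only on $(\mbX^1,\beta^1)$.

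\textbf{Analytic side.} The splitting $\mbX=\mbX^0\times\mbX^1$ places the element $\gamma\in\GL_{2n}(F)$ attached to $\alpha$, up to conjugacy, into the standard Levi $\GL_{2n^0}(F)\times\GL_{2n^1}(F)$, with regular semisimple blocks $\gamma^0$ (matching $\beta^0$) and $\gamma^1$ (matching $\beta^1$), and the acting group $\GL_n(F\times F)\times\GL_n(F\times F)$ meets this Levi in the product of the analogous groups for the two blocks. By parabolic descent of orbital integrals, using that $1_{\GL_{2n}(O_F)}$ is bi-$\GL_{2n}(O_F)$-invariant, one obtains --- up to the usual $\pm q_F^{ks}$ ambiguity --- the factorisation $O(\alpha,1_{\GL_{2n}(O_F)},s) = O(\alpha^0,1_{\GL_{2n^0}(O_F)},s)\cdot O(\alpha^1,1_{\GL_{2n^1}(O_F)},s)$. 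The étale factor has non-vanishing central value, and the non-derivative (``semisimple'') linear fundamental lemma for the étale datum, which can be checked directly since both sides compute the same lattice count, gives $O(\alpha^1,1_{\GL_{2n^1}(O_F)},0) = m^1(\beta^1)$. Since $\alpha^0$ matches $\beta^0$ one has $O(\alpha^0,1,0)=0$, so the product rule retains a single term: $\tfrac{d}{ds}\vert_{s=0}O(\alpha,1,s) = m^1(\beta^1)\cdot\tfrac{d}{ds}\vert_{s=0}O(\alpha^0,1,s)$. Combining this with $\mr{Int}(\beta) = m^1(\beta^1)\,\mr{Int}(\beta^0)$ and the basic linear AFL for $(\alpha^0,\beta^0)$ proves the conjecture; the normalisation $\tfrac{1}{2\log q_F}$ and the sign match because they are unaffected by the common integer $m^1(\beta^1)$.

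\textbf{Main obstacle.} The crux is the geometric claim: that $Z_1\cap Z_2$ is genuinely concentrated over the ``split'' locus in $M^1$, that the connected--étale extension parameters contribute no extra length there --- equivalently, that no ``mixed'' components survive, or that any that do are correctly accounted for --- and that the resulting étale multiplicity is exactly the one appearing in the orbital-integral factorisation. Making the fibration of $M$ over $M^1$ and its cycle-theoretic refinement precise, in particular controlling non-split extensions of $O_E$-modules and proving the identification of completed local rings above, is the technical heart. By comparison the analytic descent should be routine once the compatible parabolics are set up, although reconciling the $q_F^{ks}$ normalisations through the factorisation and pinning down the sign will require some care.
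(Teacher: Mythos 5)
Your overall strategy --- split $\mbX = \mbX^0\times\mbX^1$, fibre $M$ over $M^0\times M^1$ via the connected--étale sequence, descend the orbital integral to the Levi $\GL_{2n^0}\times\GL_{2n^1}$, use a fundamental lemma for the étale block, and conclude with the basic case --- is exactly the route taken in the paper (Thm.~\ref{thm:intro}, proved via Thm.~\ref{thm:analytic_reduction} and Thm.~\ref{thm:main}). But your central geometric assertion is false: the extension data is \emph{not} rigid over the intersection. The paper identifies $M$ with the total space of the $\pi$-divisible module $\Hom(TY^1,Y^0)$ over $M^0\times M^1$ (Prop.~\ref{prop:fibration_RZ_space}), and the induced map from $I(\beta,m)$ to the product of the intersections for $\beta^0$ and $\beta^1$ is finite locally free of degree
$\left|\mathrm{Disc}_{E_1/F}\cdot\mathrm{Disc}_{E_2/F}\right|_F^{-\frac{n^0 n^1}{2}}\left|\Res\left(\mr{Inv}(\beta^0),\mr{Inv}(\beta^1)\right)\right|_F^{-1}q^{n^1|m^0|+n^0|m^1|}$
(Prop.~\ref{prop:fiber_count_geometric}): requiring the extension class $\phi\colon TY^1\to Y^0$ to be $O_{E_1}$-linear at one end of the isogeny chain and $O_{E_2}$-linear at the other cuts out a finite flat subscheme of exactly that degree, not a single point, and computing this degree (via the isogeny $\Phi$ and the element $w(\beta)$, Prop.~\ref{prop:key_degree_computation}) is the technical heart of the paper. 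So your identity $\mr{Int}(\beta)=m^1(\beta^1)\,\mr{Int}(\beta^0)$ fails whenever $\Res(\mr{Inv}(\beta^0),\mr{Inv}(\beta^1))$ is not a unit; the correct statement is Thm.~\ref{thm:main}. Symmetrically, your clean factorization $O(\alpha,1,s)=O(\alpha^0,1,s)\,O(\alpha^1,1,s)$ is off by the matching factor $\left|\mathrm{Disc}_{E_3/F}\right|_F^{-\frac{n^0n^1}{2}}\left|\Res\left(\mr{Inv}(\alpha^0),\mr{Inv}(\alpha^1)\right)\right|_F^{-1}$ (Thm.~\ref{thm:analytic_reduction}; for general Hecke functions the descent moreover goes through the partial Satake transform). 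The two omissions happen to cancel in the final comparison --- that cancellation is precisely how Cor.~\ref{cor:main} works --- but as written both of your intermediate identities are false, and your argument contains no mechanism that produces, or cancels, the resultant.

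Two further points. The step $O(\alpha^1,1_{\GL_{2n^1}(O_F)},0)=m^1(\beta^1)$ is not a formal ``same lattice count'': the left-hand side counts pairs of $O_{E_0}$- and $O_{E_3}$-stable lattices weighted by $\eta$, the right-hand side $O_{E_1}$- and $O_{E_2}$-stable ones, and their equality is exactly the Guo--Jacquet FL (Conj.~\ref{conj:FL}), a theorem of Guo for the unit function with $E_1=E_2$ (the setting of Conj.~\ref{conj:intro}) that must be cited or assumed, not checked in passing. Finally, your last step invokes the basic linear AFL, which is itself open for $n^0\geq 3$; so even after repairing the above, the argument yields only the equivalence ``Conj.~\ref{conj:intro} holds for $\beta$ if and only if it holds for $\beta^0$'' together with the small-height cases --- that is, the paper's Thm.~\ref{thm:intro} and Cor.~\ref{cor:intro} --- and not the conjecture itself.
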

The basic case is precisely when $\mbX = \mbX^0$ is connected. In this situation, the above conjecture has (up to sign) already been formulated in work of the first author \cite{Li} and in his joint work with Howard \cite{HL}. In fact, \cite{Li} gives a definition of $\mr{Int}(β, f)$ for every $f\in \mcH$ and formulates Conj. \ref{conj:intro} in this generality, while \cite{HL} allows the cycles $Z_1,Z_2$ to be defined for two different quadratic extensions $E_1,E_2/F$ (biquadratic setting). We work in the combined generality throughout the paper and refer to Conj. \ref{conj:AFL} below for the general version.

Conj. \ref{conj:intro} is known when $n = 1$ or $2$. The more general version, Conj. \ref{conj:AFL}, is known in all cases when $n=1$, cf. \cite{Li, HL}. It is currently only known for the unit function $f = 1_{GL_{2n}(O_F)}$ (but possibly $E_1 \not \cong E_2$) when $n = 2$, cf. \cite{Li_GL4, Li_future}.

The following two are our main results.
\begin{thm}\label{thm:intro}
Write $β = (β^0, β^1)$ for the two components of $β$ with respect to $\mbX = \mbX^0\times \mbX^1$. Then Conj. \ref{conj:intro} holds for $β$ if and only if its hold for the connected component $β^0$.
\end{thm}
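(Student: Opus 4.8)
The plan is to run the connected--étale filtration and show that both sides of Conjecture~\ref{conj:intro} for $\beta$ reduce to the corresponding sides for $\beta^0$. Write $2n^0$ for the height of $\mbX^0$, so $n = n^0 + n^1$; the decomposition $\End^0(\mbX) = \End^0(\mbX^0)\times\End^0(\mbX^1)$ induces $\beta = (\beta^0,\beta^1)$, $L = L^0\times L^1$, $\Gamma = \Gamma^0\times\Gamma^1$, and --- after conjugation, which does not affect orbital integrals --- $\alpha = (\alpha^0,\alpha^1)$ with $\alpha_i^j\colon F\times F\to\End^0(\mbX^j)$. The first step is to check that the invariants of Definition~\ref{def:matching} are multiplicative over orthogonal direct sums, so that $\alpha^0$ matches $\beta^0$ and $\alpha^1$ matches $\beta^1$; in particular $O(\alpha^0,\mathbf{1}_{\GL_{2n^0}(O_F)},0) = 0$. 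Granting this, it suffices to prove $\mathrm{Int}(\beta) = \mathrm{Int}(\beta^0)$ and the product formula $O(\alpha,\mathbf{1}_{\GL_{2n}(O_F)},s) = O(\alpha^0,\mathbf{1}_{\GL_{2n^0}(O_F)},s)\cdot O(\alpha^1,\mathbf{1}_{\GL_{2n^1}(O_F)},s)$ together with $O(\alpha^1,\mathbf{1}_{\GL_{2n^1}(O_F)},0) = \pm 1$: the Leibniz rule and the vanishing of $O(\alpha^0,\mathbf{1},0)$ then give $\frac{d}{ds}\big|_{s=0}O(\alpha,\mathbf{1},s) = \pm\,\frac{d}{ds}\big|_{s=0}O(\alpha^0,\mathbf{1},s)$, and combined with the first identity this yields the asserted equivalence in both directions.

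\textbf{The geometric side.} Passing to the connected part of the universal object gives $L^\times$-equivariant morphisms $M\to M^0$ and $Z_i\to Z_i^0$, hence $\Gamma\backslash(Z_1\cap Z_2)\to\Gamma^0\backslash(Z_1^0\cap Z_2^0)$, and I claim this is an isomorphism. Over an Artinian local $O_{\breve F}$-algebra $R$, a point of $Z_1\cap Z_2$ is a deformation $Y$ of $\mbX$ carrying both $O_E$-actions, i.e.\ an action of the $F$-algebra $A := \langle\beta_1(E),\beta_2(E)\rangle$; its connected--étale sequence $0\to Y^0\to Y\to Y^1\to 0$ is $A$-stable, classified by the $A$-balanced part of $\Ext^1_R(Y^1,Y^0)$. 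Now $A$ embeds into $A^0\times A^1$, where $A^0 = \langle\beta_i^0(E)\rangle$ is a finite-dimensional subalgebra of the division algebra $\End^0(\mbX^0)$ --- hence itself a division algebra, in particular simple. I expect that regular semisimplicity forces $A = A^0\times A^1$: otherwise $A^0$ would be a block of $A^1$ and its commutant would contribute a non-commutative factor $M_k((A^0)^{\mathrm{op}})$ to $L^1$, contradicting that $L$ is étale. Given $A = A^0\times A^1$, the $A$-balanced part of $\Ext^1_R(Y^1,Y^0)$ vanishes (an $(A^0\times A^1)$-map from a module supported on the $A^1$-factor to one supported on the $A^0$-factor is zero), so the extension splits: $Y = Y^0\times Y^1$. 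Finally $Z_1^1\cap Z_2^1$ is the discrete set of common $\beta_i^1(O_E)$-stable lattices in the slope-$0$ space underlying $\mbX^1$; by regular semisimplicity it is a torsor under the image of $L^{1,\times}$ acting by scalars, hence a single reduced point modulo $\Gamma^1$. Combining, $\Gamma\backslash(Z_1\cap Z_2)\iso\Gamma^0\backslash(Z_1^0\cap Z_2^0)$, so $\mathrm{Int}(\beta) = \mathrm{Int}(\beta^0)$ (and, incidentally, the left side is Artinian).

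\textbf{The analytic side, and the main obstacle.} The element $\gamma\in\GL_{2n}(F)$ attached to $\alpha = (\alpha^0,\alpha^1)$ is block diagonal, $\gamma = \gamma^0\oplus\gamma^1$ with $\gamma^1$ of the ``unramified''/trivial-Newton type dictated by the étale datum, and the standard lattice splits as $O_F^{2n^0}\oplus O_F^{2n^1}$. A parabolic (Mars--Springer-type) descent along the stabilizer of $O_F^{2n^1}$ should factor $O(\alpha,\mathbf{1}_{\GL_{2n}(O_F)},s)$ as the product above, with the second factor a split orbital integral on $\GL_{2n^1}(F)$; a direct evaluation --- an elementary instance of the Jacquet--Rallis-type fundamental lemma --- gives that it is a Laurent polynomial in $q_F^{-s}$ with value $\pm 1$ at $s=0$, matching the geometric étale contribution. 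The \textbf{main obstacle} is the geometric step: identifying the complete local rings of $Z_1\cap Z_2$ with the module-theoretic $\Ext$-computation above via Grothendieck--Messing/Dieudonné theory for \emph{strict} $\pi$-divisible $O_F$-modules, making the vanishing argument uniform (including residue characteristic $2$), and --- relatedly --- verifying that ``regular semisimple'' is strong enough that no $O_E$-bilinear extension classes survive; should a common sub-CM-algebra of the connected and étale parts be allowed to occur, one would instead need to compute its (presumably matching) contributions on both sides. The orbital-integral descent and the bookkeeping of normalizations ensuring the two étale factors coincide exactly are the secondary technical points.
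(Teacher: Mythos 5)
Your overall strategy is the paper's (reduce both sides of the conjecture along the connected--étale decomposition and exploit the vanishing of the central value attached to the division part), but both of your key intermediate identities are false, and the supporting arguments do not survive scrutiny. On the geometric side, the extension $0\to Y^0\to Y\to Y^1\to 0$ does \emph{not} split on the intersection. The point of $Z(\beta_1)\cap Z(\beta_2)$ only requires the two quadratic orders $\beta_1(O_{E_1})$ and $\beta_2(O_{E_2})$ to act integrally on the deformation; hence the extension class $\varphi\colon TY^1\to Y^0$ is only required to intertwine the two pairs of actions $(\beta_i^1,\beta_i^0)$, $i=1,2$. Even granting that regular semisimplicity gives $A\otimes F \cong A^0\times A^1$ rationally (which is true, since a common simple factor would force $\mathrm{Res}(\mathrm{Inv}(\beta^0),\mathrm{Inv}(\beta^1))=0$), the idempotent $(1,0)$ is not integral on the deformation: only $\pi^N(1,0)$ lies in the order generated by $\beta_1(O_{E_1}),\beta_2(O_{E_2})$, so the intertwining condition only yields $\pi^N\varphi=0$ --- which is no constraint at all of the kind you want, because over an Artinian base $Y^0(R)$ is $\pi$-power torsion. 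In fact the set of admissible $\varphi$ is a finite flat group scheme whose order is exactly the nontrivial degree computed in Prop.~\ref{prop:fiber_count_geometric} and Lem.~\ref{lem:resultant_occurs}, essentially $\left|\mathrm{Res}(\mathrm{Inv}(\beta^0),\mathrm{Inv}(\beta^1))\right|_F^{-1}$ up to discriminant and $q$-power factors; the integer $N$ above is precisely what the resultant measures. Likewise, $\Gamma^1\backslash(Z_1^1\cap Z_2^1)$ is not a single reduced point: its (stabilizer-weighted) count is the orbital integral $O(\beta^1,1_{\GL_{2n^1}(O_F)})$, i.e.\ the number of lattices stable under both $\beta_1^1(O_{E_1})$ and $\beta_2^1(O_{E_2})$, which is generally large. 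So $\mathrm{Int}(\beta)\neq\mathrm{Int}(\beta^0)$ in general; the correct identity is Thm.~\ref{thm:main}. The "main obstacle" you flag (making the splitting argument rigorous) is therefore not a technical gap but a false claim.

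The analytic side has the mirror-image defect: the factorization is not $O(\alpha,1,s)=O(\alpha^0,1,s)\,O(\alpha^1,1,s)$ but carries the constant $\left|\mathrm{Disc}_{E_3/F}\right|_F^{-n^0n^1/2}\left|\mathrm{Res}(\mathrm{Inv}(\alpha^0),\mathrm{Inv}(\alpha^1))\right|_F^{-1}$ (Thm.~\ref{thm:analytic_reduction}, due to Guo when $E_1=E_2$), and $O(\alpha^1,1_{\GL_{2n^1}(O_F)},0)$ is not $\pm1$: by the Fundamental Lemma (Conj.~\ref{conj:FL}, known for the unit function) it equals $\pm\,O(\beta^1,1_{\GL_{2n^1}(O_F)})$, the same lattice count as above --- your "elementary direct evaluation" giving $\pm1$ is incorrect, and you never invoke the FL, which is genuinely needed here. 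It so happens that the factors you drop on the two sides coincide (the resultants agree because $\alpha^j$ matches $\beta^j$, and the étale factors agree by the FL), so the final equivalence would survive; but your argument produces neither factor and gives no mechanism for their cancellation. Matching them --- the degree computation for the fibration of RZ spaces in \S\ref{ss:fibration_RZ}--\S4.3 against the Levi reduction of the orbital integral --- is exactly the content of Thm.~\ref{thm:analytic_reduction}, Thm.~\ref{thm:main} and Cor.~\ref{cor:main}, i.e.\ the heart of the proof, and it is absent from your proposal.
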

\begin{cor}\label{cor:intro}
The AFL (Conj. \ref{conj:intro}) holds for all isogeny classes $\mbX$ whose connected component $\mbX^0$ has height $\leq 4$.
\end{cor}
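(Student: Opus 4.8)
The plan is to obtain Corollary~\ref{cor:intro} as a formal consequence of Theorem~\ref{thm:intro} together with the basic cases of the linear AFL that are already in the literature. First I would decompose $\mbX = \mbX^0 \times \mbX^1$ into its connected and étale parts as in the discussion preceding the theorem, and record that, since $\mbX$ is one-dimensional, $\mbX^0$ is a connected one-dimensional strict $\pi$-divisible $O_F$-module, say of height $2n^0$. The height is automatically even: $\mbX^0$ receives through $\beta^0$ an action of the quadratic field $E$ by quasi-isogenies, and $E$ embeds into the central division algebra $\End^0(\mbX^0)$ only when its index --- which equals the height of $\mbX^0$ --- is divisible by $2$. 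So the hypothesis ``$\mbX^0$ has height $\le 4$'' is exactly $n^0 \in \{0,1,2\}$, and by Theorem~\ref{thm:intro} it suffices to check Conj.~\ref{conj:intro} for $\beta^0$, that is, in the basic case, for these three values of $n^0$.

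Next I would dispatch the three cases. For $n^0 = 0$ one has $\mbX^0 = 0$, the corresponding RZ space is a point, $\mr{Int}(\beta^0) = 0$, the relevant central derivative vanishes as well, and there is nothing to prove. For $n^0 = 1$, Conj.~\ref{conj:intro} in the basic case is precisely the linear AFL for $n = 1$, which is a theorem of the first author \cite{Li} (and holds in the biquadratic generality by \cite{HL}). For $n^0 = 2$, one only needs the unit element $1_{GL_4(O_F)}$ of the spherical Hecke algebra, and the basic linear AFL for $n = 2$ and the unit function is known by \cite{Li_GL4, Li_future}. Feeding these into the equivalence supplied by Theorem~\ref{thm:intro} gives the corollary.

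The only thing that really needs to be checked --- and it is part of the setup of Theorem~\ref{thm:intro} rather than a separate obstacle --- is that the data obtained by passing to the connected factor land in the hypotheses of the cited basic-case results: that $\beta^0$ is again regular semi-simple with centralizer an étale $F$-algebra of degree $n^0$, that the matching pair for $\beta^0$ on the smaller group is $\alpha^0$, and that the invariants $\mr{Int}(\beta^0)$ and $O(\alpha^0, 1_{GL_{2n^0}(O_F)}, s)$ coincide with those in \cite{Li, HL, Li_GL4}. All of this is encoded in the definitions and in the statement of Theorem~\ref{thm:intro}, so once that theorem is in hand the corollary is immediate. In other words, the genuine content --- comparing intersection numbers and orbital integrals for $\mbX$ with those for $\mbX^0$ --- is entirely inside Theorem~\ref{thm:intro}, and the hard part of this paper lies there, not in the corollary.
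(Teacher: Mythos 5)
Your proposal is correct and follows essentially the paper's own route: the corollary is an immediate consequence of Thm.~\ref{thm:intro} (the unit-function, $E_1=E_2$ case of Thm.~\ref{thm:main}/Cor.~\ref{cor:main}, where the needed Fundamental Lemma input is Guo's theorem) combined with the known basic cases $n^0\le 2$ from \cite{Li, HL, Li_GL4, Li_future}, and you correctly note that the reduction for the unit Hecke function only requires the basic AFL for the unit function on $GL_{2n^0}$. One nitpick that does not affect correctness: the case $n^0=0$ simply never occurs, since $\mbX$ is one-dimensional and hence has nontrivial connected part (and regular semi-simplicity forces a division factor by Lem.~\ref{lem:characterization_of_cases}), so your dispatch of it is unnecessary and its stated justification (vanishing of $\mr{Int}$ and of the central derivative) is not the right reason.
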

We formulate and prove these two result also in the biquadratic case and for general spherical Hecke functions, cf. Thm. \ref{thm:main} and Cor. \ref{cor:main}. In this generality, however, we have to assume the Guo--Jacquet Fundamental Lemma (resp. its biquadratic variant) for the whole spherical Hecke algebra; it is currently only known for the unit Hecke function and $E_1 = E_2$, see \cite{Guo}.

The proof of Thm. \ref{thm:intro} relies on the connected-étale sequence $0\to X^0\to X\to X^1\to 0$ of the universal $π$-divisible $O_F$-module $X$ over $M$. It provides a fibration map
\begin{equation}
\label{eq:fibration_intro}
M \lr M^0\times_{\Spf O_{\breve F}} M^1
\end{equation}
to the product of the RZ spaces for $\mbX^0$ and $\mbX^1$. If $Y^0$ and $Y^1$ denote their universal $π$-divisible $O_F$-modules, then $M$ can be identified with the space of torsion extensions $\underline{\Ext}^1(Y^1, Y^0)_{\mr{tors}}$. This space is in bijection with the Hom-functor $\underline{\Hom}(T(Y^1), Y^0)$, where $T(Y^1)$ denotes the Tate module. In this way, $M$ is described as the total space of a $π$-divisible $O_F$-module over $M^0\times_{\Spf O_{\breve F}} M^1$. This description allows to relate intersections in the three spaces and we prove the identity
\begin{equation}\label{eq:int_number_identity_intro}
\mr{Int}(β) = |\mr{Res}(\mr{Inv}(β^0), \mr{Inv}(β^1))|^{-1}_F\, O(β^1, 1_{GL_{2n^1}(O_F)})\, \mr{Int}(β^0).
\end{equation}
Here, $\Inv(β^0)$, $\Inv(β^1)\in F[T]$ denote the invariants of $β^0$ and $β^1$ as in Rmk. \ref{rmk:inv_E}, $\mr{Res}(\cdot,\cdot)$ denotes the resultant of two polynomials, and $|\cdot|_F$ denotes the normalized absolute value on $F$. A similar identity can be shown for $O(α, f, s)$, leading to the proof of Thm. \ref{thm:intro}. Again, we carry out all these ideas in the biquadratic setting and for arbitrary $f$.

The structure of this article is as follows. In \S\ref{s:FL}, we recall necessary background on matching, orbital integrals and the Guo--Jacquet Fundamental Lemma. This is essentially taken from \cite{HL}, our own addition being a formulation of the analytic side of Thm. \ref{thm:intro}. In \S\ref{s:AFL}, we define our intersection numbers and formulate the AFL. \S\ref{s:reduction} is devoted to the proof of Thm. \ref{thm:intro}, its heart being the fibration argument for \eqref{eq:fibration_intro} in \S\ref{ss:fibration_RZ} and a linear algebra computation that produces the resultant factor in \eqref{eq:int_number_identity_intro}. Finally, our appendices \S\ref{s:appendix_loc_inter} and \S\ref{s:appendix_Satake} provide background material on correspondences and the partial Satake transform.

\subsection*{Acknowledgments}

We thank M. Rapoport for his continued interest in our work and several helpful discussions. We are grateful to U. Hartl for correspondence on $π$-divisible groups. We furthermore thank N. Hultberg, W. Zhang and Z. Zhang for comments on an earlier version of this article. We also thank the referee for their suggestions.

\section{The Fundamental Lemma}
\label{s:FL}
This section recalls necessary background on matching, orbital integrals and the fundamental lemma from \cite{Guo} and \cite{HL}. 

\subsection{Invariants}

Fix a field $F$ and an integer $n\geq 1$. We consider two quadratic étale extensions $E_1,E_2/F$ and write $σ_1$ resp. $σ_2$ for their non-trivial involutions over $F$. Let $B = \prod_{j\in J} B^j$ be the product of a finite family $(B^j)_{j\in J}$ of central simple $F$-algebras. We assume that the degree $[B^j:F] = \dim_F(B^j)^{1/2}$ of each factor is even and that the total degree $[B:F] = \sum_{j\in J} [B^j:F]$ is $2n$.

We write $β:(E_1,E_2)\to B$ for pairs $(β_1:E_1\to B$, $β_2:E_2\to B)$ of embeddings as $F$-algebras into $B$. Giving a map $β_i:E_i\to B$ is the same as giving a tuple of maps $β_i^j:E_i\to B^j$. Without further mentioning, we only consider maps $β_i^j$ that make $B^j$ into a free $E_i$-module. This additional specification only matters if $E_i \iso F\times F$. In particular, pairs $β:(E_1, E_2)\to B$ are the same as tuples of pairs $β^j:(E_1, E_2)\to B^j$.

Let $E_3 \subset E_1\tensor_F E_2$ be the fixed ring under $σ_1\tensor σ_2$. It is a quadratic étale $F$-algebra and we denote by $σ_3:E_3\to E_3$ its non-trivial $F$-involution. Given $β$ as above, Howard--Li \cite{HL}*{\S2} define a canonical element $\mathbf{s}_β \in E_3\tensor_F B$ that centralizes $β_1(E_1)$ and $β_2(E_2)$. Their definition, which we do not need here, extends verbatim from CSAs to products of CSAs; in fact, simply $\mathbf{s}_β = (\mathbf{s}_{β^j})_{j\in J}$. They show in \cite{HL}*{Prop. 2.2.3} that the reduced characteristic polynomial
$$\mr{charred}_{E_3 \tensor_F B/E_3}(\mathbf{s}_β; T) = \prod_{j\in J} \charred_{E_3\tensor_FB/E_3}(\mathbf{s}_{β^j}; T) \in E_3[T]$$
is a square and make the following definition.

\begin{defn}\label{def:reg_ss}
\begin{enumerate}[wide, labelindent=0pt, labelwidth=!, label=(\arabic*), topsep=2pt, itemsep=2pt]
\item The set of invariants of degree $n$ is the set of monic polynomials $δ\in E_3[T]$ of degree $n$ that satisfy the symmetry
\begin{equation}\label{eq:def_symmetry}
(-1)^n δ(1-T) = σ_3(δ(T)).
\end{equation}
An invariant $δ$ is called regular semi-simple if for every map $ρ:E_3 \to \ov{F}$, the polynomial $ρ(δ)$ has $n$ distinct non-zero roots.
\item The invariant $\Inv(β) \in E_3[T]$ of a pair $β$ is the unique monic square root of $\mr{charred}_{E_3\tensor_F B/E_3}(\mathbf{s}_β; T).$ It satisfies \eqref{eq:def_symmetry} by \cite[Prop. 2.3.3]{HL}. A pair $β$ is called regular semi-simple if its invariant is regular semi-simple.
\end{enumerate}
\end{defn}

\begin{rmk}\label{rmk:inv_E}
If $E_1 = E_2$, then $E_3 \iso F\times F$. Fixing such an isomorphism, one may renormalize the invariant by considering only the first component of $\Inv(β;T) \in (F\times F)[T]$. This was done for the formulation of \eqref{eq:int_number_identity_intro}.
\end{rmk}

Note that $\Inv(β) = \prod_{j\in J} \Inv(β^j)$ and that $\Inv(β)$ only depends on the $B^\times$-conjugation orbit of $β$. For regular semi-simple $β$ and simple $B$, a converse holds:
\begin{prop}[\protect{\cite[Cor. 2.5.7]{HL}}]\label{prop:regular_semi_simple}
Assume that $B$ is a central simple $F$-algebra. Two regular semi-simple pairs $β,β':(E_1,E_2)\to B$ are $B^\times$-conjugate if and only if $\mr{Inv}(β) = \mr{Inv}(β')$.
\end{prop}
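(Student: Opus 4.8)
The ``only if'' implication is immediate from naturality: the Howard--Li element $\mathbf{s}_\beta$ is functorial in the pair $\beta$, so $\mathbf{s}_{g\beta g^{-1}} = (1\otimes g)\,\mathbf{s}_\beta\,(1\otimes g)^{-1}$ for $g\in B^\times$, and reduced characteristic polynomials are conjugation-invariant; hence $\mr{Inv}(g\beta g^{-1})=\mr{Inv}(\beta)$. The content is the converse, and my plan is to reduce it to Skolem--Noether via a rigidity statement for the subalgebra generated by $\beta$.

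Fix $\delta = \mr{Inv}(\beta)=\mr{Inv}(\beta')$; it is regular semi-simple, hence so are $\beta$ and $\beta'$. Write $R_\beta\subseteq B$ for the $F$-subalgebra generated by $\beta(E_1)\cup\beta(E_2)$ and $L_\beta = Z_B(R_\beta)$ for its centralizer. By regular semi-simplicity $L_\beta$ is étale of degree $n$ and $R_\beta = Z_B(L_\beta)$ is its full bicommutant; in particular $R_\beta$ is semi-simple with centre $L_\beta$, namely the commutant of a degree-$n$ étale subalgebra of a degree-$2n$ central simple algebra (so a product of quaternion algebras over the factor fields of $L_\beta$). The key step I would prove is: \emph{the isomorphism class of the triple $(R_\beta;\ \beta(E_1),\ \beta(E_2))$ --- a semi-simple $F$-algebra equipped with two marked quadratic étale subalgebras --- depends only on $\delta$.}

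To see this, pass to $E_3\otimes_F R_\beta$, where $\mathbf{s}_\beta$ is available. Since $\mathbf{s}_\beta$ centralizes $\beta(E_1)$ and $\beta(E_2)$, it lies in $E_3\otimes_F L_\beta$; as its reduced characteristic polynomial over $E_3$ equals $\delta^2$ with $\delta$ squarefree, $\mathbf{s}_\beta$ generates the étale $E_3$-algebra $E_3\otimes_F L_\beta$, giving $E_3\otimes_F L_\beta\cong E_3[T]/(\delta(T))$ with $\mathbf{s}_\beta\leftrightarrow T$. Thus $L_\beta$ is recovered from $\delta$ after the base change $E_3/F$, and the symmetry \eqref{eq:def_symmetry} --- which by \cite{HL} reflects a $\sigma_3$-equivariance of $\mathbf{s}_\beta$ --- lets one descend this identification back to $F$. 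It then remains to pin down the ``cross terms'' $\beta_1(e_1)\beta_2(e_2)$, equivalently the multiplication of $E_3\otimes_F R_\beta$ as a module over $E_3\otimes_F L_\beta$ relative to the two marked subalgebras; here one unwinds the definition of $\mathbf{s}_\beta$ from \cite[\S2]{HL}, which expresses these in terms of $\mathbf{s}_\beta$ and $E_1,E_2$, and invokes regular semi-simplicity once more to check that no continuous parameters remain, so that $(R_\beta;\beta(E_1),\beta(E_2))$ is determined by $\delta$ (again descending $E_3\to F$).

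Granting the rigidity claim, the argument concludes as follows. For regular semi-simple $\beta,\beta'$ with $\mr{Inv}(\beta)=\mr{Inv}(\beta')$ we obtain an $F$-algebra isomorphism $\varphi\colon R_\beta\xrightarrow{\ \sim\ }R_{\beta'}$ with $\varphi\circ\beta_i=\beta_i'$. Since $R_\beta$ and $R_{\beta'}$ are the full centralizers in $B$ of the isomorphic degree-$n$ étale algebras $L_\beta\cong L_{\beta'}$, the corresponding embeddings $R_\beta\hookrightarrow B$ and $R_{\beta'}\hookrightarrow B$ have the same type as subalgebras acting on the simple $B$-module, so the semi-simple form of Skolem--Noether yields $g\in B^\times$ with $\varphi(x)=gxg^{-1}$; then $\beta_i'=g\beta_ig^{-1}$, as claimed. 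I expect the rigidity claim to be the main obstacle, and within it the two delicate points: extracting from the Howard--Li formula for $\mathbf{s}_\beta$ exactly enough to see that regular semi-simplicity rigidifies $(R_\beta;\beta(E_1),\beta(E_2))$, and carrying out the quadratic descent along $E_3/F$ (a Hilbert~$90$ argument; alternatively one may postpone all descent by first conjugating $\mathbf{s}_\beta$ to $\mathbf{s}_{\beta'}$ inside $(E_3\otimes_F B)^\times$ and only afterwards descending the conjugating element to $B^\times$).
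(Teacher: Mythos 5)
First, a remark on the comparison itself: the paper offers no proof of this statement --- it is quoted verbatim from \cite{HL}*{Cor. 2.5.7} --- so the only meaningful benchmark is the route in \emph{loc.\ cit.}, which deduces the conjugacy statement from the universal pair $\beta_\delta:(E_1,E_2)\to B_\delta$ of \cite{HL}*{Prop. 2.5.6}, quoted in this paper as Prop.~\ref{prop:universal_pair}. Your skeleton (rigidify the subalgebra generated by the pair together with its two marked quadratic subalgebras, then conclude by Skolem--Noether for semisimple subalgebras) is exactly that route in disguise: your ``rigidity claim'' for $(R_\beta;\beta_1(E_1),\beta_2(E_2))$ is essentially equivalent to the existence and universality of $(B_\delta;\beta_{\delta,1}(E_1),\beta_{\delta,2}(E_2))$, since every $\beta$ of invariant $\delta$ factors through $\beta_\delta$ and the images generate $B_\delta$ (one must also check that $B_\delta\to B$ is injective, e.g.\ because the centralizer of $\beta$ is étale of degree $n$). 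Had you invoked Prop.~\ref{prop:universal_pair}, which the paper records precisely for such use, the first half of your argument would be complete.

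As a self-contained blind proof, however, there is a genuine gap, and it sits exactly where you locate it: the rigidity claim is not proved. After base change to $E_3$ you correctly identify $E_3\otimes_F L_\beta\iso E_3[T]/(\delta(T))$ via $\mathbf{s}_\beta$, but the decisive steps --- recovering the multiplication of $E_3\otimes_F R_\beta$ relative to the two marked subalgebras (``no continuous parameters remain'') and the descent along $E_3/F$ --- are announced, not carried out; this is the entire content of \cite{HL}*{\S2.5}, not a routine verification. A second, smaller, issue is the final Skolem--Noether step: being ``full centralizers of isomorphic degree-$n$ étale algebras'' does not by itself imply that the two embeddings $R_\beta,R_{\beta'}\hookrightarrow B$ induce isomorphic module structures (embeddings of a degree-$n$ étale algebra into a degree-$2n$ central simple algebra can have genuinely different multiplicity types, with centralizers of different dimensions), and your appeal to $R_\beta=Z_B(L_\beta)$ via the bicommutant already presupposes semisimplicity of $R_\beta$, which is part of what is being established. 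What actually makes the step work is a multiplicity count: writing $B=\End_{D_0}(V)$ and decomposing $V$ over $R_\beta\iso B_\delta=\prod_i B^i$, the constraints $\dim_F B_\delta=4n$ and $\deg B=2n$ force every simple factor to occur with multiplicity exactly one, so the two $B_\delta$-module structures on $V$ coincide and the semisimple Skolem--Noether theorem applies. With Prop.~\ref{prop:universal_pair} granted and this count supplied, your argument closes; without them, the core of the converse direction remains open.
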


\begin{rmk}
The converse direction of Prop. \ref{prop:regular_semi_simple} does not hold for non-simple $B$. For example, take $E_1 = E_2 = F\times F$ and consider two regular semi-simple pairs $β^0, β^1:(E_1, E_2)\to M_2(F)$ with $\Inv(β^0) \neq \Inv(β^1)$. Then
$$(β^0, β^1),\ (β^1, β^0):(E_1, E_2)\lr B := M_2(F)\times M_2(F)$$
are both regular semi-simple with invariant $\Inv(β^0)\Inv(β^1)$. They are not $B^\times$-conjugate, however, because $β^0$ and $β^1$ are not $GL_2(F)$-conjugate. 
\end{rmk}

An immediate question is, given $E_1$, $E_2$, $B$ and a regular semi-simple invariant $δ$ as above, whether or not there exists a pair $β:(E_1, E_2) \to B$ with $\Inv(β) = δ$. This is partially answered by the following universal construction.

\begin{prop}[\cite{HL}*{Prop. 2.5.6}]\label{prop:universal_pair}
Given $E_1$, $E_2$ and a regular semi-simple invariant $δ\in E_3[T]$ of degree $n$, there exists an étale $F$-algebra $L_δ$ of degree $n$ and a quaternion algebra $B_δ/L_δ$, together with a pair of embeddings $β_δ:(E_1,E_2)\to B_δ$, that has the following properties.
\begin{enumerate}[wide, labelindent=0pt, labelwidth=!, label=(\arabic*), topsep=2pt, itemsep=2pt]
\item The image $β_{δ,1}(E_1) \cup β_{δ,2}(E_2)$ generates $B_δ$ as $F$-algebra.
\item The natural maps $β_{δ,i}(E_i)\tensor_F L_δ\to B_δ$ are injective and make $β_{δ,i}(E_i)\tensor_FL_δ$ into a maximal commutative $F$-subalgebra of $B$.
\item For every product $B$ of central simple $F$-algebras of total degree $[B:F] = 2n$, and for every pair $β:(E_1,E_2)\to B$ of invariant $δ$, there is a unique embedding of $F$-algebras $ι:B_δ\to B$ such that $β = ι\circ β_δ$. Conversely, if $β = ι\circ β_δ$ for an embedding $ι:B_δ\to B$ of $F$-algebras, then $\Inv(β) = δ$.
\end{enumerate}
\end{prop}
\begin{proof}
Starting from $(E_1, E_2, δ)$, the datum $β_δ:(E_1, E_2)\to B_δ$ is constructed during the proof of \cite{HL}*{Prop. 2.5.6}. (The algebra $B_δ$ is denote by $\mcF(φ_1, φ_2)$ there.) It has properties (1) and (2) by construction. Property (3) is stated in \cite{HL}*{Prop. 2.5.6} for simple $B$. The statement for general $B$ is obtained by the same proof.
\end{proof}

Thus pairs $β$ of invariant $δ$ for $B$ are the same as $F$-algebra maps $B_δ\to B$. Implicit in Prop. \ref{prop:universal_pair} is moreover the statement that the centralizer of a regular semi-simple pair $β$ is an étale $F$-algebra of degree $n$. Namely, it is isomorphic to the center $L_δ\subset B_δ$. By the construction of $B_δ$ in \cite{HL}, it may be described (up to isomorphism) by
\begin{equation}\label{eq:descript_L}
L_δ \iso (E_3[T]/(δ(T)))^{σ_3 = \mr{id}}
\end{equation}
where $σ_3$ is extended to $E_3[T]$ as $σ_3(T) = 1-T$.

\subsection{Matching}
From now on, we let $F$ be a non-archimedean local field with normalized absolute value $|\cdot|$ and assume $E_1,E_2$ to be étale field extensions.
Put $E_0 = F\times F$. Then the invariant $\Inv(α)$ of a pair $α = (α_0, α_3):(E_0, E_3)\to M_{2n}(F)$ lies in $E_3[T]$ as well.
\begin{defn}\label{def:matching}
Two regular semi-simple pairs $α:(E_0,E_3)\to M_{2n}(F)$ and $β:(E_1,E_2)\to B$ are said to match if $\Inv(α) = \Inv(β)$.
\end{defn}

Let $δ\in E_3[T]$ be regular semi-simple and of degree $n$. Let $α_δ:(E_0, E_3) \to A_δ$ be the universal datum for $(E_0, E_3, δ)$ from Prop. \ref{prop:universal_pair} and let $L_δ = \Cent(A_δ)$. By part (2) of Prop. \ref{prop:universal_pair}, the $L_δ$-algebra $E_0\tensor_FL_δ \iso L_δ\times L_δ$ embeds into $A_δ$ which implies $A_δ \iso M_2(L_δ)$. In particular, there exists an embedding $ι:A_δ \to M_{2n}(F)$. By Prop. \ref{prop:universal_pair} (3), $\Inv(ι\circ α_δ) = δ$. It follows that for every regular semi-simple invariant $δ$ of degree $n$, there exists a pair $α:(E_0, E_3)\to M_{2n}(F)$ with $\mr{Inv}(α) = δ$. On the other hand, given $E_1$, $E_2$, $B$ and $δ$, there need not exist a pair $β:(E_1, E_2)\to B$ with $\mr{Inv}(β) = δ$. Def. \ref{def:matching} is asymmetric in this sense.

\subsection{Orbital integrals for $(E_1, E_2)$}
The fundamental lemma compares $η$-twisted orbital integrals of pairs $(E_0,E_3)\to M_{2n}(F)$ with orbital integrals of matching pairs $(E_1,E_2)\to M_{2n}(F)$. In this section, we define the second kind. Throughout, $Λ_{\mr{std}} := O_F^{2n}$ denotes the standard lattice and $\mcH = \mbC[GL_{2n}(O_F)\backslash GL_{2n}(F) / GL_{2n}(O_F)]$ the spherical Hecke algebra.

\begin{defn}\label{def:orb_int_geom_side}
Let $β:(E_1,E_2)\to M_{2n}(F)$ be regular semi-simple and write $H_i := \mr{Cent}(E_i)^\times$. Choose $g_i \in GL_{2n}(F)$ such that $g_iΛ_{\mr{std}}$ is $O_{E_i}$-stable. For $f\in \mcH$, define
\begin{equation}\label{eq:def_orb_int_geom_side}
O(β, f) := \int_{(H_1\cap H_2)\back (H_1\times H_2)} f(g_1^{-1}h_1^{-1}h_2g_2) dh_1dh_2.
\end{equation}
\end{defn}
Here we endow $H_i \iso GL_n(E_i)$ with the Haar measure such that $\mr{Vol}(GL_n(O_{E_i})) = 1$. The stabilizer $H_1\cap H_2$ is the group of units $L^\times$, where $L = \Cent(β(E_1)\cup β(E_2)) \iso L_δ$ is an étale $F$-algebra of degree $n$ by Prop. \ref{prop:universal_pair}. We endow it with the measure such that $\mr{Vol}(O_L^\times) = 1$. The definition is independent of the chosen $g_i$, once convergence is established:

\begin{lem}
The subset $(H_1\cap H_2)\back (H_1\times H_2)\subset GL_{2n}(F)$ is closed. In particular, the integrand in \eqref{eq:def_orb_int_geom_side} has compact support.
\end{lem}
\begin{proof}
Set $G = GL_{2n}(F)$. There is an isomorphism
$$G\back (G/H_1\times G/H_2) \lsimto H_1\back G/H_2,\ \ (g_1, g_2)\longmapsto g_1^{-1}g_2.$$
By Skolem--Noether, via $(g_1, g_2)\mapsto (g_1β_1g_1^{-1}, g_2β_2g_2^{-1})$, the source is in bijection with the $G$-orbits of pairs $(E_1, E_2)\to M_{2n}(F)$. By Prop. \ref{prop:regular_semi_simple}, the orbit of $β$ agrees with the locus where the invariant takes value $\Inv(β)$. It is, in particular, closed. Since the orbit of $β$ maps to $H_1H_2$ in the target, it follows that $(H_1\cap H_2)\back (H_1\times H_2)\subset GL_{2n}(F)$ is a closed subset as claimed.
\end{proof}

The orbital integral $O(β, f)$ can also be understood as an $f$-weighted lattice count. Let $\mcL(β_i)$ denote the $O_{E_i}$-stable lattices in $F^{2n}$. This set is stable under the $L^\times$-action because $L^\times$ centralizes $β_i(E_i)$. Let $g_i\in GL_{2n}(F)$ be fixed as in Def. \ref{def:orb_int_geom_side} and set $K_i = \Stab(g_iΛ_\std)\cap H_i$. Then
\begin{equation}\label{eq:L_beta}
H_i/K_i \lsimto \mcL(β_i),\ \ h_i \longmapsto h_ig_iΛ_\std.
\end{equation}
As $f$ lies in the spherical Hecke algebra, the restriction $f\vert_{g_1^{-1}K_1h_1^{-1} h_2K_2g_2}$ is constant for every element $(h_1, h_2)\in H_1\times H_2$. Moreover, the volume of $L^\times \back (L^\times \cdot (h_1K_1\times h_2K_2))$ is equal to
\begin{equation}\label{eq:L_volume}
\mr{Vol}(L^\times \cap h_1K_1 h_1^{-1}\cap h_2K_2 h_2^{-1}) = \mr{Vol}(\mr{Stab}_{L^\times}(h_1g_1Λ_\std, h_2g_2Λ_\std)).
\end{equation}
We introduce the following notation: Given two $O_F$-lattices $Λ_1,Λ_2\subset F^{2n}$, we write $f(Λ_1,Λ_2)$ for the value of $f$ at the relative position of $Λ_1$ and $Λ_2$. That is, $f(Λ_1,Λ_2) := f(k_1^{-1}k_2)$ where the $k_i\in GL_{2n}(F)$ are chosen with $Λ_i = k_iΛ_{\mr{std}}$. Also choose a cocompact, free, discrete subgroup $Γ\subseteq L^\times$. Combining \eqref{eq:L_beta} and \eqref{eq:L_volume} with Def. \ref{def:orb_int_geom_side}, we obtain
\begin{equation}\label{eq:lattice_count_1}
\begin{aligned}
O(β,f)\ & =\ \sum_{(Λ_1, Λ_2) \in L^\times \backslash (\mathcal{L}(β_1)\times \mathcal{L}(β_2))} \mr{Vol}(\mr{Stab}_{L^\times}(Λ_1,Λ_2))^{-1} f(Λ_1,Λ_2)\\[3mm]
\ & =\ \mr{Vol}(Γ\backslash L^\times)^{-1} \sum_{(Λ_1, Λ_2) \in Γ \backslash (\mathcal{L}(β_1)\times \mathcal{L}(β_2))} f(Λ_1,Λ_2).
\end{aligned}
\end{equation}

\subsection{$η$-Twisted orbital integrals for $(E_0, E_3)$}
\label{ss:eta_orb}
Let $η:E_3^\times \to \{\pm 1\}$ be the character from local class field theory for the extension $E_1\tensor_FE_2/E_3$. Our aim is to define certain $η$-twisted orbital integrals for regular semi-simple pairs $α:(E_0, E_3)\to M_{2n}(F)$. We only consider spherical Hecke functions in this article. For these, the definition of such integrals is only interesting when $η$ restricts to the trivial character on $O_{E_3}^\times$; cf. \cite{HL}*{Prop. 3.2.7}. Thus we assume from now on that $E_1\tensor_FE_2/E_3$ is unramified. Equivalently, we assume that one out of $E_1, E_2$ is unramified over $F$. This leaves us with the following two cases:
\begin{enumerate}[wide, labelindent=0pt, labelwidth=!, label=(\arabic*), topsep=2pt, itemsep=2pt]
\item If $E_1, E_2$ are both unramified over $F$, then $E_3 \iso F\times F$ and $E_1\tensor_FE_2 \iso E_1 \times E_1$ is unramified over $E_3$.
\item If precisely one out of $E_1, E_2$ is ramified over $F$, then $E_1\tensor_FE_2$ is a biquadratic field extension of $F$ such that $σ_1 \tensor σ_2$ acts non-trivially on the residue field. It follows that $E_3/F$ is ramified and $E_1\tensor_FE_2/E_3$ unramified.
\end{enumerate}
In particular, we see that in both cases $η$ can be defined by $η(x) = (-1)^{[O_{E_3}: xO_{E_3}]}$, where the index\footnote{If $Λ' \subset Λ\subset V$ are $O_F$-lattices in a finite dimension $F$-vector space $V$, then we set $[Λ:Λ'] = \log_q|Λ/Λ'|$ where $q = q_F$. For general $Λ', Λ \subset V$, we put $[Λ:Λ'] = [Λ:π^nΛ'] - n\dim(V)$ for $n\gg 0$.} is meant as $O_F$-lattices in $E_3$. We define $η$ to $GL_n(E_3)$ by $η(h) := η(\det h)$. Denote by $|\cdot|_F$ the normalized absolute value on $F$. Recall that $E_0 = F\times F$. We also define the character
\begin{equation}
\begin{aligned}
|\cdot|:GL_n(E_0)\ & \lr\ \mbR^\times\\
(a,b)\ & \longmapsto \ \left|\frac{\det(a)}{\det(b)}\right|.
\end{aligned}
\end{equation}

The definition of $O(β, f)$ in \eqref{eq:def_orb_int_geom_side} required the choice of auxiliary elements $g_1, g_2\in GL_{2n}(F)$. The situation for $η$-twisted orbital integrals is similar, but we need to be more careful to ensure independence of such an auxiliary choice. Consider a regular semi-simple pair $α:(E_0, E_3)\to M_{2n}(F)$. We write $\mcL(α_i)$ for the set of $α_i(O_{E_i})$-lattices in $F^{2n}$. Any lattice $Λ_0\in \mcL(α_0)$ is of the form $Λ_0^+\times Λ_0^-$, where $Λ_0^+\subset α_0(1, 0)F^{2n}$ is an $α_0(O_F\times 0)$-lattice and $Λ_0^-\subset α_0(0,1)F^{2n}$ an $α_0(0\times O_F)$-lattice. Since $α$ is regular semi-simple, one can show (omitted) that the two natural maps obtained from $α_3$,
$$E_3\tensor_F \left(α_0(1,0)\cdot F^{2n}\right) \lr F^{2n},\qquad E_3\tensor_F \left(α_0(0,1)\cdot F^{2n}\right) \lr F^{2n}$$
are bijective. In particular, given $(Λ_0, Λ_3)\in \mcL(α_0)\times \mcL(α_3)$, there are well-defined lattice indices
\begin{equation}
[Λ_3 : O_{E_3} Λ_0^+],\ [Λ_3:O_{E_3} Λ_0^-]\ \in \mbZ.
\end{equation}
\begin{defn}\label{def:orb_int_analytic_side}
Let $α:(E_0,E_3)\to M_{2n}(F)$ be regular semi-simple. Given $(Λ_0, Λ_3)\in \mcL(α_0)\times \mcL(α_3)$ and $s\in \mbC$, the transfer factor of $(Λ_0, Λ_3)$ is defined by
\begin{equation}\label{eq:root_number}
\Omega(Λ_0, Λ_3, s) = (-1)^{[Λ_3:O_{E_3}Λ_0^+]} \cdot \big(q^{[Λ_3:O_{E_3}Λ_0^-] - [Λ_3:O_{E_3}Λ_0^+]}\big)^{s/2}.
\end{equation}
This function has the transformation property
\begin{equation}\label{eq:trafo_root}
\Omega(h_0Λ_0, h_3Λ_3, s) = |h_0|^s η(h_3) \Omega(Λ_0, Λ_3, s),\qquad h_0\in H_0,\ h_3\in H_3.
\end{equation}
Choose $g_0, g_3\in GL_{2n}(F)$ such that $g_iΛ_{\mr{std}} \in \mcL(α_i)$. For $f\in \mcH$, define
\begin{equation}\label{eq:def_orb_int_analytic_side}
O(α, f, s) \ :=\ \Omega(g_0Λ_{\mr{std}}, g_3Λ_{\mr{std}}) \cdot \int_{(H_0\cap H_3)\back (H_0\times H_3)} f(g_0^{-1}h_0^{-1}h_3g_3)|h_0|^s η(h_3) dh_0 dh_3.
\end{equation}
The integrand here is well-defined by \cite[Lem. 3.2.3]{HL} which states that $η$ and $|\cdot |$ are trivial on $H_0\cap H_3$. Moreover, the normalization of Haar measures in \eqref{eq:def_orb_int_analytic_side} is as in Def. \ref{def:orb_int_geom_side} and the same remarks on convergence apply. Identity \eqref{eq:trafo_root} allows to rewrite \eqref{eq:def_orb_int_analytic_side} as
\begin{equation}\label{eq:orb_int_analytic_simple}
O(α, f, s) = \int_{(H_0\cap H_3)\back (H_0\times H_3)} f(g_0^{-1}h_0^{-1}h_3g_3)\, \Omega(h_0g_0Λ_\std, h_3g_3Λ_\std)\, dh_0 dh_3.
\end{equation}
As $f$ is spherical, $f(g_0^{-1}h_0^{-1}h_3g_3)$ only depends on the lattices $g_0Λ_\std$ and $Λ_3Λ_\std$ and not on the precise choices of $g_0, g_3$. Using that $H_i$ acts transitively on $\mcL(α_i)$, we see that $O(α, f, s)$ is actually independent of $g_0$ and $g_3$.
\end{defn}

There is a lattice count formula for $O(α, f, s)$ that is analogous to \eqref{eq:lattice_count_1}: Let $L$ denote the centralizer of $α$ and pick a cocompact subgroup $Γ\subset L^\times$ as before. Then \eqref{eq:orb_int_analytic_simple} gives
\begin{equation}\label{eq:lattice_count_2}
\begin{aligned}
O(α,f,s)\ & =\ \sum_{(Λ_0, Λ_3) \in L^\times \backslash (\mathcal{L}(α_0)\times \mathcal{L}(α_3))} \mr{Vol}(\mr{Stab}_{L^\times}(Λ_0,Λ_3))^{-1} \,f(Λ_0,Λ_3) \,\Omega(Λ_0, Λ_3, s)\\[3mm]
\ & =\ \mr{Vol}(Γ\backslash L^\times)^{-1} \sum_{(Λ_0, Λ_3) \in Γ \backslash (\mathcal{L}(α_0)\times \mathcal{L}(α_3))} f(Λ_0,Λ_3)\,\Omega(Λ_0, Λ_3, s).
\end{aligned}
\end{equation}

\subsection{Fundamental Lemma}
Recall that we assumed from \S\ref{ss:eta_orb} that at least one out of $E_1,E_2$ is unramified over $F$.

\begin{conj}[Guo--Jacquet Fundamental Lemma]\label{conj:FL}
Assume that $α:(E_0,E_3)\to M_{2n}(F)$ and $β:(E_1,E_2)\to M_{2n}(F)$ are regular semi-simple and matching. Let $f\in \mcH$ be a spherical Hecke function. Then
\begin{equation}\label{eq:FL}
O(α, f, 0) = O(β,f).
\end{equation}
\end{conj}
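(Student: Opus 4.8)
The plan is to follow the general template for fundamental lemmas of this "Jacquet--Rallis / Guo--Jacquet" type: reduce the statement over the full spherical Hecke algebra to a single identity involving the unit element, and then prove that identity by an explicit lattice-counting argument using the reformulations \eqref{eq:lattice_count_1} and \eqref{eq:lattice_count_2}. However, since Conj.\ \ref{conj:FL} is stated as a conjecture (known only for $f = 1_{GL_{2n}(O_F)}$ and $E_1 = E_2$ by Guo), I would not attempt a proof of the general case; instead I would either (a) recall Guo's proof in the unit/$E_1 = E_2$ case, or (b) explain the Satake-transform reduction that propagates the statement from the unit function to all of $\mcH$ and then invoke the appendix \S\ref{s:appendix_Satake} on the partial Satake transform. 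Most likely the intended "proof" here is simply a citation to \cite{Guo} in the classical case, with the general case left open and flagged as an input hypothesis for Thm.\ \ref{thm:intro}; so the honest plan is to state precisely what is known and what is assumed.

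For the substance of case (b), which is the part a proof would actually need to do, I would proceed as follows. First, observe that both sides of \eqref{eq:FL} are linear functionals of $f \in \mcH$, so it suffices to check the identity on a set of algebra generators, or more efficiently to show that the two linear functionals $f \mapsto O(\beta, f)$ and $f \mapsto \pm O(\alpha, f, 0)$ are intertwined by the same operator. Second, I would interpret the right-hand side via \eqref{eq:lattice_count_1} as a sum over $\Gamma \backslash (\mcL(\beta_1) \times \mcL(\beta_2))$ of $f$ evaluated at relative positions, and the left-hand side via \eqref{eq:lattice_count_2} as the analogous $\eta$-twisted, $|\cdot|^s$-weighted sum over $\Gamma \backslash (\mcL(\alpha_0) \times \mcL(\alpha_3))$, specialized at $s = 0$. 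Third, I would set up a bijection (or a measure-preserving correspondence) between the relevant lattice pairs on the two sides that is compatible with relative position; this is where the matching hypothesis $\Inv(\alpha) = \Inv(\beta)$ enters, via Prop.\ \ref{prop:regular_semi_simple} and the universal pair $\beta_\delta$ of Prop.\ \ref{prop:universal_pair}, which identifies both lattice sets with sets of $O_{L_\delta}$-lattices (or $B_\delta$-stable lattices) in a common module. Fourth, I would track the sign: the character $\eta$ on the analytic side should correspond to the vanishing/sign ambiguity caused by the choices of $g_0, g_3$ versus $g_1, g_2$, and one checks using \cite[Lem.\ 3.2.3]{HL} that $\eta$ is trivial on the stabilizer so the twisted count is well-defined up to the stated sign.

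The main obstacle is precisely the non-elementary heart of any fundamental lemma: constructing the matching of lattice configurations and showing it preserves relative position. Unlike the Jacquet--Rallis setting where one has W.\ Zhang's and Z.\ Yun's arguments, or the "linearized" proofs via Hitchin-type fibrations, here one must handle the Guo--Jacquet symmetric space $GL_n(E_i) \backslash GL_{2n}(F) / GL_n(E_i)$, and the combinatorics of $O_{E_1} \otimes O_{E_2}$-modules do not obviously reduce to a clean orbit-by-orbit comparison because of the asymmetry of matching (a $\beta$ need not exist for every $\delta$, as noted after Def.\ \ref{def:matching}). For the unit function this was done by Guo by a direct argument; extending to all of $\mcH$ requires the base-change / Satake machinery, and the technical friction is in verifying that the partial Satake transform of \S\ref{s:appendix_Satake} intertwines the two orbital-integral functionals, which in turn needs a careful comparison of the action of the Hecke algebra on the two sides of lattice counts. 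In short: the plan is to cite \cite{Guo} for $f = 1_{GL_{2n}(O_F)}$, $E_1 = E_2$, to reduce the general case to this via the partial Satake transform of the appendix, and to flag the general case as a conjectural input; the genuinely hard step, the lattice-matching, is not re-proved here.
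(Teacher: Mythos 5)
Your main reading is correct: the statement is a conjecture, the paper offers no proof of it, and its treatment is exactly your option (a)/(c) — cite \cite{Guo} for $f = 1_{GL_{2n}(O_F)}$ with $E_1 = E_2$, cite \cite{HL} for $n=1$ and \cite{Li_future} for $n=2$ with the unit function, and otherwise treat the full statement as a hypothesis feeding into Cor.~\ref{cor:main}. One caution about your route (b), though: the partial Satake transform of Appendix \S\ref{s:appendix_Satake} does \emph{not} propagate the Fundamental Lemma from the unit function to all of $\mcH$. In the paper it serves a different purpose, namely the parabolic descent of orbital integrals from $GL_{2n}$ to the Levi $GL_{2n^0}\times GL_{2n^1}$ in Thm.~\ref{thm:analytic_reduction} and Thm.~\ref{thm:main}; it compares orbital integrals for the \emph{same} pair $\alpha$ (resp.\ $\beta$) on two different groups, not the two sides of \eqref{eq:FL}. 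The extension of the FL from the unit function to the whole spherical Hecke algebra would instead require global base-change--style arguments in the spirit of Clozel and Labesse, and the paper's Rmk.~\ref{rmk:FL_open} explicitly records that this has not been carried out for the Guo--Jacquet setting. So if you keep your write-up, drop or reframe the claim that the appendix supplies the unit-to-$\mcH$ reduction; the honest statement is that outside the cited cases the conjecture is open for every $f$, including the unit function once $E_1 \neq E_2$ and $n \geq 3$.
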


This has been conjectured by Guo--Jacquet \cite{Guo} for $E_1 = E_2$. The biquadratic case ($E_1\neq E_2$) has been conjectured by Howard--Li \cite{HL}. The following cases are known.
\begin{enumerate}[wide, labelindent=0pt, labelwidth=!, label=(\arabic*), topsep=2pt, itemsep=2pt]
\item For $E_1 = E_2$ both unramified and the unit Hecke function $f = 1_{GL_{2n}(O_F)}$, the fundamental lemma is the main result of \cite{Guo}.

\item For general $E_1, E_2$ and $n = 1$, the conjecture is known for the full spherical Hecke algebra, cf. \cite{HL}.

\item For general $E_1, E_2$ and $n = 2$, the conjecture is known for the unit element $1_{GL_{2n}(O_F)}$, which is due to the first author \cite{Li_future}.
\end{enumerate}

\begin{rmk}\label{rmk:FL_open}
FL type identities for the spherical Hecke algebra may sometimes be deduced from the FL for the unit function, using global arguments, which goes back to Clozel \cite{Clozel} and Labesse \cite{Labesse}. This idea is already mentioned in \cite{Guo} for the above FL conjecture, but does not seem to have been brought to fruition yet.
\end{rmk}

\subsection{Reduction formula}
Assume that $n = n^0 + n^1$ and that the two regular semi-simple pairs
$$α:(E_0,E_3)\lr M_{2n}(F),\ \ \ β:(E_1,E_2)\lr M_{2n}(F)$$
factor through $M_{2n^0}(F) \times M_{2n^1}(F)$. Denote their components by $α^0, α^1$ resp. $β^0, β^1$. Our aim is to relate the orbital integrals $O(α,f,s)$ resp. $O(β,f)$ with those for the Levi factors, $O(α^j, f, s)$ resp. $O(β^j, f)$, $j = 0,1$. This relation is due to Guo \cite{Guo} when $E_1 = E_2$.

We work with the following generators (as $\mbC$-vector space) of $\mcH$. For $m\in \mbZ_{\geq 0}$, write
\begin{equation}\label{eq:def_Hecke_simple}
f(m) = 1_{\{g\in M_{2n}(O_F),\ v(\det(g)) = m\}}.
\end{equation}
Given a tuple $m = (m_1,\ldots,m_r)\in \mbZ_{\geq 0}^r$, put
\begin{equation}\label{eq:def_Hecke_iterated}
f(m) = f(m_1)*f(m_2)*\ldots* f(m_r).
\end{equation}
Concretely, $f(m)(g)$ is the number of tuples of $O_F$-lattices $(Λ_r,\ldots,Λ_0)$ in $F^{2n}$ such that
\begin{equation}\label{eq:f_m}
gO_F^{2n} = Λ_0 \subseteq Λ_1 \subseteq \ldots \subseteq Λ_r = O_F^{2n},\qquad [Λ_i:Λ_{i-1}] = m_i\ \text{for all $i$}. 
\end{equation}
Also define $[π] = 1_{πGL_{2n}(O_F)}$ which is invertible with inverse $[π]^{-1} = 1_{π^{-1}GL_{2n}(O_F)}$. By Thm. \ref{mubiao},
\begin{equation}\label{eq:Hecke_algebra_generators}
\mcH = \sum_{k\in \mbZ,\ r\geq 0,\ m = (m_1,\ldots,m_r)\in \mbZ_{\geq 0}^r} \mbC\cdot [π]^k*f(m).
\end{equation}
We remark that this is not a direct sum, meaning there are non-trivial relations among the generators $[π]^k*f(m)$. We also mention that
\begin{equation}\label{eq:trivial_subst}
O(α, [π]^k*f, s) = O(α, f, s),\quad O(β, [π]^k*f) = O(β, f)
\end{equation}
for every $f\in \mcH$, $k\in \mbZ$. These identities are obtained from $([π]^k*f)(g) = f(π^{-k}g)$ and a variable substitution in the definitions of $O(α, f, s)$ and $O(β, f)$. Let $\mcH^j = \mbC[GL_{2n^j}(O_F)\backslash GL_{2n^j}(F) / GL_{2n^j}(O_F)]$, for $j = 0,1$, denote the spherical Hecke algebra of $GL_{2n^j}(F)$. There is a partial Satake transformation
$$\mcS: \mcH \lr \mcH^0\tensor_\mbC \mcH^1$$
which is discussed in detail in \eqref{eq:part_Satake} and Prop. \ref{explicit}. It is described on the above generators\footnote{In the appendix, the notation for $f(m)$ is $\mathbf T_m$ or $\mathbf T_{\GL_{2n}, m}$, see \eqref{eq:def_generators}.} by
\begin{equation}
\label{eq:Satake}
\mcS([π]^k*f(m)) = \sum_{m^0,\,m^1 \in \mbZ_{\geq 0}^r,\ m = m^0 + m^1} q^{n^1\cdot |m^0| + n^0\cdot |m^1|} ([π]^k*f(m^0)) \tensor ([π]^k*f(m^1)).
\end{equation}
Here, $|m| := \sum_{i = 1}^r m_i$ denotes the sum of all coefficients. Taking the product of the orbital integrals for $α^0$ and $α^1$, resp. for $β^0$ and $β^1$, defines orbital integrals
$$O^{\mr{Levi}}((α^0,α^1), f, s),\ \ \ O^{\mr{Levi}}((β^0,β^1), f),\ \ \ f\in \mcH^0\tensor_{\mbC}\mcH^1.$$
\begin{thm}\label{thm:analytic_reduction}
Given $α$ and $β$ as above, there are the following relations of orbital integrals on $GL_{2n}(F)$ and its Levi $GL_{2n^0}(F)\times GL_{2n^1}(F)$.
\begin{equation}\label{eq:analytic_reduction}
\begin{aligned}
O(α, f, s) & = &\left|\mathrm{Disc}_{E_3/F}\right|_F^{-\frac{n^0\cdot n^1}2} &
\left|\mr{Res}\left(\mathrm{Inv}(α^0), \mathrm{Inv}(α^1)\right)\right|_F^{-1}
\cdot O^{\mr{Levi}}((α^0,α^1), \mcS(f), s)\\
O(β, f) & = &\left|\mathrm{Disc}_{E_1/F}\cdot\mathrm{Disc}_{E_2/F}\right|_F^{-\frac{n^0\cdot n^1}2}&
\left|\mr{Res}\left(\mathrm{Inv}(β^0), \mathrm{Inv}(β^1)\right)\right|_F^{-1}
\cdot O^{\mr{Levi}}((β^0,β^1), \mcS(f)).
\end{aligned}
\end{equation}
Here, we have used the notation $|x|_F := |\mr{Nm}_{E_3/F}(x)|^{1/2}_F$ for elements $x\in E_3$.
\end{thm}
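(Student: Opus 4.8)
The plan is to start from the lattice-count formulas \eqref{eq:lattice_count_1} and \eqref{eq:lattice_count_2} and to reorganize the sums over $O_{E_i}$-stable lattices in $F^{2n}$ in terms of the decomposition $F^{2n} = F^{2n^0}\oplus F^{2n^1}$ induced by the factorization of $\alpha$ (resp.\ $\beta$) through the Levi. The first point is that, since $\alpha$ (resp.\ $\beta$) is regular semi-simple, the two blocks $\alpha^0,\alpha^1$ (resp.\ $\beta^0,\beta^1$) have coprime invariants — this is exactly what the non-vanishing of $\mr{Res}(\mathrm{Inv}(\alpha^0),\mathrm{Inv}(\alpha^1))$ encodes — so an $O_{E_i}$-stable lattice $\Lambda_i\subset F^{2n}$ is commensurable with a direct sum $\Lambda_i^0\oplus\Lambda_i^1$ with $\Lambda_i^j\subset F^{2n^j}$ an $O_{E_i}$-stable lattice, but need \emph{not} equal such a direct sum. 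The discrepancy between $\Lambda_i$ and $\Lambda_i^0\oplus\Lambda_i^1$ is measured by a finite-length module, and the generating function that counts these discrepancies with the appropriate $f$- and $|\cdot|^s$-weights is precisely what will produce the resultant and discriminant factors together with the weights $q^{n^1|m^0|+n^0|m^1|}$ appearing in the partial Satake transform \eqref{eq:Satake}.

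In more detail, I would proceed as follows. First reduce to the generators $[\pi]^k * f(m)$ via \eqref{eq:Hecke_algebra_generators}; since both sides of \eqref{eq:analytic_reduction} are linear in $f$ and $\mcS$ is an algebra map, it suffices to treat these, and in fact by multiplicativity of orbital integrals under convolution it suffices to treat a single $f(m)$, $m\in\mbZ_{\geq 0}$, i.e.\ the indicator of integral matrices of fixed determinant valuation. Second, for such $f$ the lattice sum in \eqref{eq:lattice_count_2} becomes a sum over pairs $(\Lambda_0,\Lambda_3)$ with $\Lambda_3 \supseteq \Lambda_0$ (or the reverse) of fixed colength; stratify this sum by the pair of ``block lattices'' $(\Lambda_i^0,\Lambda_i^1)$ obtained by intersecting and projecting, and observe that the fiber over a fixed tuple of block lattices is governed by $\Hom$ between the block pieces. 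Third — and this is the linear-algebra heart, the analogue of the computation announced in the introduction for $\mr{Int}(\beta)$ — compute the contribution of each fiber: the count of lattices $\Lambda$ with prescribed blocks and prescribed relative position is a product of local terms whose total mass is $\left|\mr{Res}(\mathrm{Inv}(\cdot^0),\mathrm{Inv}(\cdot^1))\right|_F^{-1}$ times a power of $q$ that matches the Satake weight, and an extra factor $|\mathrm{Disc}|_F^{-n^0 n^1/2}$ coming from comparing the volume normalization $\mr{Vol}(O_L^\times)=1$ on the centralizer $L = L^0\times L^1$ with the product of the two block normalizations (the discriminant of $E_i$ enters through the change of measure on $GL_n(E_i)$ relative to $GL_{n^0}(E_i)\times GL_{n^1}(E_i)$). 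Fourth, reassemble: the outer sum over block lattices is exactly $O^{\mr{Levi}}$ evaluated at $\mcS(f)$, with the $q$-weights in \eqref{eq:Satake} absorbing the fiber counts, and the $\mr{Res}$ and $\mr{Disc}$ prefactors come out front. The $\eta$-twist on the $E_3$-side is multiplicative in the block decomposition (since $\eta$ is a character and the relative positions add), so it passes through without trouble; similarly $|h_0|^s$ is multiplicative.

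The main obstacle I expect is the third step: making precise the claim that the ``mixing'' between the two blocks contributes exactly $\left|\mr{Res}(\mathrm{Inv}(\cdot^0),\mathrm{Inv}(\cdot^1))\right|_F^{-1}$. Concretely, given $O_{E_i}$-stable $\Lambda^0\subset F^{2n^0}$ and $\Lambda^1\subset F^{2n^1}$, one must count $O_{E_i}$-stable lattices $\Lambda$ sitting between $\Lambda^0\oplus\Lambda^1$ and its $\pi$-adic saturation, weighted by $f$ and by $|\cdot|^s$; this amounts to understanding $O_{E_i}\otimes_{O_F}$-submodules of $(\Lambda^0\oplus\Lambda^1)\otimes F/O_F$ that project isomorphically onto a graph, and the size of the relevant $\Ext$/$\Hom$ group is controlled by the resultant of the characteristic polynomials of the $E_i$-action on the two blocks — which, after extracting the square root as in Definition \ref{def:reg_ss}, is $\mr{Res}(\mathrm{Inv}(\cdot^0),\mathrm{Inv}(\cdot^1))$ up to a unit. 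I would prove this by base-changing to $\overline{F}$, where $E_i$ splits and the $E_i$-action diagonalizes, reducing the count to an elementary computation with the Smith normal form of the ``difference of eigenvalues'' matrix, whose determinant is the resultant; the discriminant factors then appear exactly from tracking how the $\overline{F}$-computation descends, i.e.\ from the index of $O_{E_i}\otimes_{O_F}$ inside its normalization after the relevant twist. Granting this local identity, the rest is bookkeeping parallel to the derivation of \eqref{eq:lattice_count_2} and the verification that it matches \eqref{eq:Satake} term by term. The geometric identity for $\mr{Int}(\beta)$ in \eqref{eq:int_number_idintity_intro} is the special case $f = 1_{GL_{2n}(O_F)}$ combined with the fibration \eqref{eq:fibration_intro}, so I would expect the same linear-algebra lemma to serve both purposes.
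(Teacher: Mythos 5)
Your overall strategy coincides with the paper's: rewrite both orbital integrals as lattice sums via \eqref{eq:lattice_count_1}--\eqref{eq:lattice_count_2}, fibre the set of stable lattices over pairs of block lattices through $\Lambda\mapsto(\Lambda\cap V^0,\,p^1(\Lambda))$ together with the gluing datum $s:\Lambda^1\to V^0/\Lambda^0$, show each fibre has exactly $\left|\mathrm{Disc}\right|_F^{-n^0n^1/2}\left|\mr{Res}(\mathrm{Inv}(\cdot^0),\mathrm{Inv}(\cdot^1))\right|_F^{-1}q^{\,n^1|m^0|+n^0|m^1|}$ elements, and reassemble against the explicit partial Satake formula \eqref{eq:Satake} (this is \S\ref{s:reduction}, Props. \ref{prop:fiber_count_analytic_E03} and \ref{prop:fiber_count_analytic_E12}). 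The genuine gap is your opening reduction: you claim that ``by multiplicativity of orbital integrals under convolution it suffices to treat a single $f(m)$''. Orbital integrals are linear functionals of $f$, not algebra homomorphisms; $O(\alpha,f*g,s)$ does not factor, and knowing \eqref{eq:analytic_reduction} on an algebra-generating set does not propagate to all of $\mcH$. The single-$m$ functions $[\pi]^k*f(m)$ do not span $\mcH$ as a vector space; the spanning set \eqref{eq:Hecke_algebra_generators} consists of the iterated convolutions $[\pi]^k*f(m_1)*\cdots*f(m_r)$ of \eqref{eq:def_Hecke_iterated}, and for these the convolution has to be unfolded inside the integral into chains $X_0\subseteq X_1\subseteq\cdots\subseteq X_r$ of lattices, only the two endpoints carrying the $O_{E}$-stability conditions. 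The paper therefore counts, over a fixed pair of block chains, the ladders of maps $\phi_i:X_i^1\to V^0/X_i^0$ compatible with the inclusions and equivariant at the two ends (diagram \eqref{eq:diagram_conditions_E03}), and the degree computation is done for the whole ladder at once via the isogeny $\Phi$ of \S4.3 (Lem. \ref{lem:phi_isogeny}). Your single-pair fibre analysis is the right core computation, but as stated it does not cover the Hecke algebra; the fix is to run it for chains, not to invoke multiplicativity.

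A secondary inaccuracy concerns the source of the discriminant factor. You attribute it to a change of Haar measure between $GL_n(E_i)$ and $GL_{n^0}(E_i)\times GL_{n^1}(E_i)$; but with all measures normalized by maximal compacts and $O_L^\times=O_{L^0}^\times\times O_{L^1}^\times$, no such factor arises. In the paper the discriminants are produced inside the fibre count itself: the decisive step (Lem. \ref{lem:resultant_occurs}) identifies the relevant composite with $f\mapsto f\circ w(\beta^1)-w(\beta^0)\circ f$ for the Howard--Li elements $w(\beta^j)$, whose determinant is the resultant of their reduced characteristic polynomials by the standard identity $\det(A\otimes 1-1\otimes B)=\mr{Res}(P_A,P_B)$; since $w(\beta^j)=d\,\mathbf{s}_{\beta^j}+c$ with $d=(\zeta_1-\zeta_1^{\sigma_1})(\zeta_2-\zeta_2^{\sigma_2})$, converting these characteristic polynomials into the invariants yields the factor $|d|_F^{n^0n^1}=|\mathrm{Disc}_{E_1/F}\mathrm{Disc}_{E_2/F}|_F^{n^0n^1/2}$ (the exponents in the theorem then come from the square root taken in \eqref{eq:key_trick}). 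Your alternative route over $\overline{F}$ via a difference-of-eigenvalues matrix is the same linear algebra, but it naturally produces the resultant of characteristic polynomials of whichever element you diagonalize, and the passage to $\mr{Res}(\mathrm{Inv}(\cdot^0),\mathrm{Inv}(\cdot^1))$ is exactly where the discriminants must be extracted; the measure-comparison explanation would give no factor at all, and the ``descent bookkeeping'' you defer is precisely this conversion.
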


\begin{rmk}\label{rmk:res_non_zero}
The resultant $\mr{Res}(\Inv(α^0), \Inv(α^1))$ lies in $E_3^\times$. Namely, $α$ being regular semi-simple in particular means that $\Inv(α) = \Inv(α^0)\Inv(α^1)$ is separable, which implies that $\Inv(α^0)$ and $\Inv(α^1)$ have no common zeroes. The same applies to all other analogous resultant terms in this article.
\end{rmk}

\begin{proof}[Proof of Thm. \ref{thm:analytic_reduction}.]
These are \cite[Prop. 2.1 and 2.2]{Guo} when $E_1 = E_2$. We will give a proof for the general case in \S\ref{s:reduction}.
\end{proof}

\subsection{Vanishing order}
We now combine the reduction formula with the following vanishing statement.

\begin{prop}[\cite{HL}*{Prop. 3.3.3}]\label{prop:sign}
Assume that $α:(E_0,E_3)\to M_{2n}(F)$ is regular semi-simple and matches to a pair $β:(E_1, E_2)\to B$, where $B$ is a central division algebra of degree $2n$ over $F$. Then
$$O(α,f, 0) = 0.$$
\end{prop}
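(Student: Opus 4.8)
The plan is to deduce the vanishing from a functional equation for $O(α,f,s)$ under $s\mapsto -s$ whose sign is $-1$ precisely when $α$ matches a pair living in a division algebra. Because $O(α,f,s)$ is intrinsically defined only modulo $\pm|F^\times|_F^s$, what I would actually establish is a symmetry of the \emph{finite} sum computing the central value: by the lattice count \eqref{eq:lattice_count_2}, $O(α,f,0)$ equals, up to a positive constant, $\sum_{(Λ_0,Λ_3)\in Γ\bs(\mcL(α_0)\times\mcL(α_3))} f(Λ_0,Λ_3)\,η(Λ_3)$, a finite sum. I would exhibit a bijection $ι$ of the index set with $f(ι(Λ_0,Λ_3)) = f(Λ_0,Λ_3)$ and $η(\mr{pr}_3\,ι(Λ_0,Λ_3)) = ε\cdot η(Λ_3)$ for a constant $ε\in\{\pm1\}$; then $O(α,f,0) = ε\cdot O(α,f,0)$, so it suffices to show $ε=-1$.

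To build $ι$ I would first choose an $E_3$-valued $σ_3$-Hermitian form on $F^{2n}$ (with the $E_3$-action via $α_3$) for which the two idempotents of $α_0$ are self-adjoint; the space of such forms is a torsor under $L^\times = \mr{Cent}(α)^\times$ and is nonempty because $α$ is regular semi-simple (this is the Hermitian avatar of the datum $α$). Passing to dual lattices $Λ_0\mapsto Λ_0^\vee$, $Λ_3\mapsto Λ_3^\vee$ gives a bijection of $\mcL(α_0)\times\mcL(α_3)$ commuting with the $L^\times$-action; self-adjointness of the $α_0$-idempotents makes it invert $|Λ_0|^s$, which is harmless at $s=0$, and one computes $f(Λ_0^\vee,Λ_3^\vee) = f^\vee(Λ_0,Λ_3)$ with $f^\vee(g) := f(g^{-1})$, together with $η(Λ_3^\vee) = η(c)\cdot η(Λ_3)$, where $c\in E_3^\times$ is the fixed discriminant class of the reference $O_{E_3}$-structure. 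To replace $f^\vee$ by $f$ I would compose with the transpose involution $g\mapsto {}^{\mr t}g$, under which $\mcH$ is stable: it carries the $α$-orbit to the orbit of the transposed pair $α^{\mr t}$, and since $\mr{Inv}(α^{\mr t})$ differs from $\mr{Inv}(α)$ only by the symmetries of Def.\ \ref{def:reg_ss} (to which the central value is insensitive), Prop.\ \ref{prop:regular_semi_simple} shows $α^{\mr t}$ is $GL_{2n}(F)$-conjugate to $α$, so the corresponding orbital integral is unchanged. The composite of these maps is the desired $ι$, with $ε = η(c)$.

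The crux — and the only place the division hypothesis enters — is to show $ε = η(c) = -1$ when $α$ matches $β:(E_1,E_2)\to B$ with $B$ a central division $F$-algebra of degree $2n$. Unwinding the construction, $c$ is, modulo norms from $E_1\tensor_F E_2$, the discriminant $\mr{disc}(\mr{Inv}(β))$ of the invariant polynomial together with the normalizing data built into $\mathbf s_β$ in \cite[\S2]{HL}. By the standard local criterion for embedding an étale algebra into a central simple algebra, the Brauer classes of degree $2n$ over $F$ admitting a pair of invariant $\mr{Inv}(β)$ are governed by this class via the Hilbert symbol and local class field theory, the division algebra occurring exactly at the value $-1$. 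Verifying this compatibility is the main obstacle; once it is in place, $ε=-1$ and $O(α,f,0)=0$. (For the unit function $f = 1_{GL_{2n}(O_F)}$ one may argue more directly, since then $f^\vee = f$ and the transpose step is unnecessary.)
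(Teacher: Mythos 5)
The decisive step of your argument is missing, and it is exactly where the whole content of the proposition lies. Your symmetry construction, even granting all its ingredients, only yields $O(α,f,0)=ε\cdot O(α,f,0)$ for some sign $ε\in\{\pm1\}$, which proves nothing unless one shows $ε=-1$; and the identification of $ε$ with $-1$ precisely when $α$ matches into a central \emph{division} algebra is the statement you defer ("the main obstacle"). This link between the discriminant-type class $c$ and the Hasse invariant of $B$ (equivalently $\mr{ord}(\Inv(α))=1$) is what \cite[Prop.\ 3.2.8 and Prop.\ 3.3.3]{HL} actually establish through the functional equation $O(α,f,s)=ε(α,s)O(α,f,-s)$ with sign $(-1)^{\mr{ord}(α)}$; the paper proves nothing new here and simply cites that result. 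So your proposal reproduces the known strategy (vanishing from an odd functional equation) but stops short of the computation that makes it work.

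There are also gaps in the intermediate bookkeeping. Lattice duality turns the relative position into its inverse, so you obtain the sum against $f^\vee(g)=f(g^{-1})$; composing with transpose does not repair this, since spherical functions satisfy $f({}^{\mr t}g)=f(g)$ but in general $f(g^{-1})\neq f(g)$ (e.g.\ for $f(m)$ of \eqref{eq:def_Hecke_simple}), so a further manipulation — in effect exchanging the roles of $Λ_0$ and $Λ_3$, or conjugating by an element swapping the two $α_0$-idempotents — is needed, and it contributes to the very sign you are trying to pin down. The existence and "torsor under $L^\times$" claim for an $E_3$-Hermitian form with self-adjoint $α_0$-idempotents also needs justification: the idempotents do not commute with $α_3(E_3)$, so "self-adjoint" must be formulated via the $F$-bilinear trace form, and the well-definedness of $η(c)$ under the remaining choices (as well as the $Γ$-equivariance of $Λ\mapsto Λ^\vee$, which is twisted by the induced involution on $L$) must be checked. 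Finally, Prop.\ \ref{prop:regular_semi_simple} requires \emph{equality} of invariants; "differing only by the symmetries of Def.\ \ref{def:reg_ss}" is not enough to conclude that $α^{\mr t}$ is conjugate to $α$ — you would need to verify $\Inv(α^{\mr t})=\Inv(α)$ on the nose.
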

This relies on a functional equation for the orbital integral that relates $O(α, f, s)$ with $O(α, f, -s)$, we refer to \cite{HL}.

\begin{defn}\label{def:algebraic_order}
Let $δ$ be a regular semi-simple invariant and let $β_δ:(E_1,E_2)\to B_δ$ be the universal datum from Prop. \ref{prop:universal_pair}. Denote by $L_δ\subseteq B_δ$ the center, write $L_δ = \prod_{j\in J} L^j$ for its factorization into fields and let $B^j/L^j$ be the corresponding factor of $B_δ$. We define the order of $δ$ as
\begin{equation}
\label{eq:def_order}
\mr{ord}(δ) := |\{j\in J\mid \text{$B^j$ is a division algebra}\}|.
\end{equation}
\end{defn}

\begin{lem}\label{lem:embed}
Let $L/F$ be a field extension of degree $d$ and let $C/L$ be a quaternion algebra. There exists a central division algebra $D/F$ of degree $[D:F] = 2d$ together with an embedding $C\to D$ if and only if $C$ is non-split.
\end{lem}
\begin{proof}
Clearly, $C$ is non-split if there exists an embedding $C\to D$ as stated. The converse direction is obtained as follows. We denote the Hasse invariant of a central simple algebra $D/F$ by $\Ha(D)\in \mbQ/\mbZ$. Recall that $D\mapsto \Ha(D)$ defines an isomorphism $\mr{Br}(F)\simto \mbQ/\mbZ$. Also recall that for every field extension $L/F$, the identity $\Ha(L\tensor_FD) = [L:F] \cdot\Ha(D)$ holds. In particular, \cite{Draxl}*{Thm. 9.12 (c)} implies that for every central division algebra $D/F$ and for every field extension $L/F$ such that $[L:F]\mid [D:F]$, there exists an embedding $L\to D$ as $F$-algebras. In this situation, \cite{Draxl}*{Cor. 9.1} shows that $\Ha(\Cent_D(L)) = [L:F]\cdot \Ha(D)$. Thus if $[D:F] = 2[L:F]$, then there exists an embedding $L\to D$ and $\Cent_D(L)$ is a quaternion division algebra over $L$ which is isomorphic to $C$ if $C$ is non-split.
\end{proof}

\begin{cor}\label{cor:vanishing_order}
Let $α:(E_0,E_3)\to M_{2n}(F)$ be a regular semi-simple pair. Then
$$\mr{ord}_{s = 0} O(α, f, s) \geq \mr{ord}(\Inv(α)).$$
\end{cor}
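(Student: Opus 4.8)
The plan is to induct on $r := \mathrm{ord}(\delta)$, where $\delta := \Inv(\alpha)$, peeling off one ``division factor'' at a time and feeding it into Prop.~\ref{prop:sign}. First I would fix notation: write $L_\delta = \prod_{i\in I} L^i$ and $B_\delta = \prod_{i\in I} B^i$ as in Def.~\ref{def:algebraic_order}, which induces a factorization $\delta = \prod_{i\in I} \delta^i$ in $E_3[T]$ into regular semi-simple invariants of degrees $n_i := [L^i:F]$, the $i$-th factor being realized by $\beta_\delta$ restricted to the block $B^i$ (so that the universal data attached to $\delta^i$ is exactly $(L^i, B^i)$ by the uniqueness in Prop.~\ref{prop:universal_pair}). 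Since $\delta$ is regular semi-simple, for every $\rho:E_3\to\ov F$ the roots of $\rho(\delta)$ are distinct, so the $\delta^i$ are pairwise coprime over $E_3$. Set $r = \mathrm{ord}(\delta) = |\{i\in I : B^i\text{ division}\}|$.

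The base case $r = 0$ is trivial: by the lattice count \eqref{eq:lattice_count_2}, with finiteness of the sum coming from Cor.~\ref{cor:regular_semi_simple}, any $O(\alpha,f,s)$ is a finite $\mathbb{C}$-linear combination of functions $\pm q_F^{\ell s}$, hence extends to an entire function of $q_F^s$ and in particular is holomorphic at $s=0$. I would record this remark once at the start, since it applies to all the orbital integrals that appear below and is what makes ``order of vanishing at $s=0$'' meaningful (additive under products, and $\geq$ the minimum under finite sums).

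For the inductive step, assume $r\geq 1$ and choose $i_0\in I$ with $B^{i_0}$ division. Both $\delta^{i_0}$ and $\delta/\delta^{i_0}$ are regular semi-simple invariants, and since every regular semi-simple invariant is realized over the split central simple algebra of the appropriate degree (Prop.~\ref{prop:universal_pair} together with the remark after Def.~\ref{def:matching}), I can pick pairs $\alpha^0:(E_0,E_3)\to M_{2n_{i_0}}(F)$ and $\alpha^1:(E_0,E_3)\to M_{2(n-n_{i_0})}(F)$ with $\Inv(\alpha^0)=\delta^{i_0}$ and $\Inv(\alpha^1)=\delta/\delta^{i_0}$. The block-diagonal pair $\alpha^0\oplus\alpha^1$ has invariant $\delta$, hence is $GL_{2n}(F)$-conjugate to $\alpha$ by Prop.~\ref{prop:regular_semi_simple}; as both $\mathrm{ord}_{s=0}O(\cdot,f,s)$ and $\mathrm{ord}(\Inv(\cdot))$ depend only on the conjugacy class (the former being manifest from \eqref{eq:lattice_count_2}), I may assume $\alpha = \alpha^0\oplus\alpha^1$. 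Now $B_{\delta^{i_0}} = B^{i_0}$ is a division quaternion algebra over $L^{i_0}$, so it embeds into a central division $F$-algebra of degree $2n_{i_0}$; composing $\beta_{\delta^{i_0}}$ with this embedding yields a pair into a central division algebra matching $\alpha^0$, so $O(\alpha^0,g,0)=0$ for every $g\in\mathcal{H}^0$ by Prop.~\ref{prop:sign}, i.e.\ $\mathrm{ord}_{s=0}O(\alpha^0,g,s)\geq 1$. Meanwhile $\Inv(\alpha^1)=\delta/\delta^{i_0}$ has order $r-1$, so the inductive hypothesis gives $\mathrm{ord}_{s=0}O(\alpha^1,g,s)\geq r-1$ for every $g\in\mathcal{H}^1$. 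Finally apply Thm.~\ref{thm:analytic_reduction}: $O(\alpha,f,s) = C\cdot O^{\mathrm{Levi}}((\alpha^0,\alpha^1),\mathcal{S}(f),s)$ with $C$ a power of $|\mathrm{Disc}_{E_3/F}|_F$ times $|\mathrm{Res}(\delta^{i_0},\delta/\delta^{i_0})|_F^{-1}$, and $C\neq 0$ precisely because the two factors are coprime and $E_3/F$ is étale. Writing $\mathcal{S}(f)=\sum_j f^0_j\otimes f^1_j$ exhibits the right side as $C\sum_j O(\alpha^0,f^0_j,s)\,O(\alpha^1,f^1_j,s)$, each summand vanishing at $s=0$ to order $\geq 1+(r-1)=r$; hence $\mathrm{ord}_{s=0}O(\alpha,f,s)\geq r$. (In the degenerate case $I=\{i_0\}$, so $n=n_{i_0}$, no reduction formula is needed: $\alpha$ itself matches into a central division algebra of degree $2n$ and $O(\alpha,f,0)=0$ by Prop.~\ref{prop:sign} directly, which is the bound for $r=1$.) This closes the induction.

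I expect the main friction to be bookkeeping rather than analysis: justifying the reduction to a block-diagonal representative of $\alpha$ so that Thm.~\ref{thm:analytic_reduction} applies, and checking that the prefactor $C$ never vanishes --- both of which come down to the pairwise coprimality of the $\delta^i$, which is exactly what regular semi-simplicity of $\delta$ provides. One must also be careful to invoke Prop.~\ref{prop:sign} in the correct dimension ($2n_{i_0}$, or $2n$ in the degenerate case) and for the division algebra into which $B^{i_0}$ actually embeds. Beyond Thm.~\ref{thm:analytic_reduction} and Prop.~\ref{prop:sign}, no further input is required.
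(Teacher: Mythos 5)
Your proof is correct and is essentially the paper's own argument: the corollary is obtained there precisely by combining the reduction formula Thm.~\ref{thm:analytic_reduction} with the vanishing statement Prop.~\ref{prop:sign}, applied to the division blocks of $B_\delta$, and your induction (peeling off one division factor at a time after conjugating $\alpha$ into block-diagonal form via Prop.~\ref{prop:regular_semi_simple}) just makes that combination explicit. The bookkeeping you flag --- coprimality of the $\delta^i$, nonvanishing of the resultant prefactor, and analyticity of the orbital integrals in $q_F^s$ --- is handled correctly.
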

\begin{proof}
Let $δ = \Inv(α)$ and let $β_δ:(E_1, E_2)\to B_δ$ be the universal datum for $(E_1, E_2, δ)$. Write $L_δ = \prod_{j\in J} L^j$ and $B_δ = \prod_{j\in J}B^j$ as before. Let $δ = \prod_{j\in J}δ^j$ be the corresponding factorization of $δ$, see \eqref{eq:descript_L}, and note that $B^j = B_{δ^j}$. By Lem. \ref{lem:embed}, $B^j$ embeds into a central division algebra of degree $2[L^j:F]$ if it is a division algebra. Then Prop. \ref{prop:sign} states that $O(α^j, f^j) = 0$ for every spherical Hecke function $f^j$ on $GL_{2[L^j:F]}(F)$ and every pair $α^j:(E_0, E_3)\to GL_{2[L^j:F]}(F)$ of invariant $δ^j$. Combining this observation with the product formula Thm. \ref{thm:analytic_reduction} and performing induction on $|J|$, we obtain the corollary.
\end{proof}

Let $q_F$ be the residue cardinality of $F$. We next consider the normalized first derivative
\begin{equation}\label{eq:def_delO}
\del(α,f) := \frac{1}{\log(q_F)} \left.\frac{d}{ds}\right\vert_{s = 0}O(α, f, s).
\end{equation}
\begin{cor}\label{cor:classification_for_derivative}
Assume that $α:(E_0, E_3) \to M_{2n}(F)$ is regular semi-simple and that $f\in \mcH$ is such that
\begin{equation}\label{eq:assume}
O(α,f,0) = 0,\ \ \ \del(α,f) \neq 0.
\end{equation}
Then $\mr{ord}(\mr{Inv}(α)) = 1$. In particular, there are uniquely determined $n^0, n^1$ with $n = n^0 + n^1$ such that $α$ matches a pair $β:(E_1,E_2)\to D_{1/2n^0} \times M_{2n^1}(F).$ Here, $D_λ$ denotes the central division algebra of Hasse invariant $λ$ over $F$.
\end{cor}
\begin{proof}
Set $\ord(α) = \ord(\Inv(α))$. Cor. \ref{cor:vanishing_order} provides $\mr{ord}(α) \leq 1$ and we are required to exclude the case $\mr{ord}(α) = 0$. This relies on the functional equation \cite[Prop. 3.2.8]{HL}, saying
\begin{equation}\label{eq:fctl_eq}
O(α,f,s) = ε(α,s)O(α,f,-s)
\end{equation}
for a factor $ε(α,s) \in \pm|F^\times|_F^s$ that only depends on $α$. The proof of \cite[Prop. 3.3.3]{HL}, combined with Thm. \ref{thm:analytic_reduction} and Lem. \ref{lem:embed}, shows that the sign of $ε(α,s)$ is $(-1)^{\mr{ord}(α)}$. Assume it were positive, say $ε(α,s) = q_F^{rs}$. Taking the derivative $(d/ds)\vert_{s=0}$ of \eqref{eq:fctl_eq}, we obtain that
$$\log(q_F) \del(α, f) = r\log(q_F)O(α, f) - \log(q_F) \del(α, f).$$
This is impossible under our assumptions \eqref{eq:assume}. Thus necessarily $\mr{ord}(α) = 1$. The existence of $(n^0, n^1)$ and of the pair $β$ is now evident from Lem. \ref{lem:embed}: Take $n^0 = [L^{j_0}:F]$ and $n^1 = n - n^0$, where $j_0\in J$ is the unique index in \eqref{eq:def_order} such that $B^{j_0}$ is division.
\end{proof}

\section{The Arithmetic Fundamental Lemma}
\label{s:AFL}
\subsection{Quadratic cycles}

Let $F$, $E_1$ and $E_2$ as well as $n\geq 1$ be as before. In particular, $E_1$ and $E_2$ are quadratic étale field extensions of $F$ of which at least one is unramified. Denote by $\breve F/F$ the completion of a maximal unramified extension and by $\mbF$ its residue field. Let $\breve E$ be a composite of $\breve F$, $E_1$ and $E_2$, and fix embeddings $E_1, E_2 \to \breve E$.

\begin{defn}\label{def:strict_O_F_module}
Let $S$ be an $O_{\breve F}$-scheme such that $π\in \mcO_S$ is locally nilpotent.
\begin{enumerate}[wide, labelindent=0pt, labelwidth=!, label=(\arabic*), topsep=2pt, itemsep=2pt]
\item Assume $F$ is $p$-adic. A strict $π$-divisible $O_F$-module over $S$ is a pair $(X, ι)$ consisting of a $p$-divisible group $X$ over $S$ and a strict action $ι:O_F\to \End(X)$. The latter means $ι(a) = a$ on $\Lie(X)$.
\item Assume $F \iso \mbF_q(\!(π)\!)$ is a local function field. A strict $π$-divisible $O_F$-module over $S$ is a $π$-divisible group\footnote{These are called $π$-divisible local Anderson modules in \cite{HS}. Our terminology is chosen for uniformity with the $p$-adic case.} $(X,ι)$ over $S$ (for $F$) in the sense of \cite[Def. 7.1]{HS} that is strict, meaning that $ι(a) = a$ on $\Lie(X)$.
\end{enumerate}
\end{defn}
The theories of these two types of objects are completely parallel, at least to the extent required by this article. We also remark that the objects in (2) are equivalent to minuscule local $O_F$-shtuka, cf. \cite[Thm. 8.3]{HS}.

Let $\mbX/\mbF$ be a strict $π$-divisible $O_F$-module of height $2n$ and dimension $1$ and consider a pair of embeddings
$$β:(E_1,E_2)\lr \End^0(\mbX).$$
Considering its identity component and maximal étale quotient, $\mbX$ decomposes in a unique way as a product $\mbX = \mbX^0 \times \mbX^1$
with $\mbX^0$ connected and $\mbX^1$ étale. The heights of $\mbX^0$ and $\mbX^1$ are even because of the existence of $β$ and we write $\mr{ht}(\mbX^0) = 2n^0$ and $\mr{ht}(\mbX^1) = 2n^1$. Then
$$\End(\mbX) \iso O_D \times M_{2n^1}(O_F)$$
where $D/F$ denotes a central division algebra of Hasse invariant $1/2n^0$. The datum of $β$ is thus equivalent to that of two pairs
$$β^0:(E_1,E_2) \to \End^0(\mbX^0),\ \ \ β^1:(E_1 , E_2) \to \End^0(\mbX^1).$$
\begin{lem}\label{lem:characterization_of_cases}
Assume that $β$ is regular semi-simple. Then $\mr{ord}(\Inv(β)) = 1$. Conversely, for every regular semi-simple invariant $δ$ with $\mr{ord}(δ) = 1$, there is a unique pair $(\mbX,β)$ as above up to isogeny such that $\Inv(β) = δ$.
\end{lem}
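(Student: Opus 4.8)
The plan is to match up the two types of classification data. On the geometric side, a pair $(\mbX, β)$ with $\mbX$ of height $2n$ and dimension $1$ decomposes as $(\mbX^0, β^0) \times (\mbX^1, β^1)$, and by the displayed isomorphism $\End(\mbX) \iso O_D \times M_{2n^1}(O_F)$, the pair $β$ is the same as an $F$-algebra map $(E_1, E_2) \to D \times M_{2n^1}(F)$, where $D = D_{1/2n^0}$ is the central division algebra of Hasse invariant $1/2n^0$ (with $D = F$ when $n^0 = 0$, i.e.\ $\mbX$ étale). So, up to isogeny, the data of $(\mbX, β)$ is exactly the data of a nonnegative integer $n^0 \leq n$ together with a pair of embeddings $β : (E_1, E_2) \to D_{1/2n^0} \times M_{2(n - n^0)}(F)$.

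First I would recall (as in \S\ref{s:AFL}, and Definition \ref{def:algebraic_order}) that the invariant is multiplicative over products of semisimple $F$-algebras, so $\Inv(β) = \Inv(β^0) \cdot \Inv(β^1)$, and that $\mr{ord}(\Inv(β))$ counts the factors $L^i$ of the centralizer over which the corresponding quaternion algebra $B^i$ is division. For the factor $D_{1/2n^0}$: any embedding $(E_1, E_2) \to D_{1/2n^0}$ has centralizer a degree-$n^0$ field (a central division algebra over $F$ of degree $2n$ contains no proper non-field étale subalgebra of the relevant type because the centralizer of a maximal commutative subfield is again a field), so $\Inv(β^0)$ is a single factor with $B^{i_0}$ division, contributing $1$ to the order; for the split factor $M_{2n^1}(F)$, every quaternion factor $B^i$ of the centralizer embeds into a matrix algebra, hence is split, contributing $0$. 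This proves the forward direction: $\mr{ord}(\Inv(β)) = 1$.

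For the converse, let $δ$ be a regular semisimple invariant with $\mr{ord}(δ) = 1$. Using Prop. \ref{prop:universal_pair}, write the universal pair $β_δ : (E_1, E_2) \to B_δ$ over the étale $F$-algebra $L_δ = \prod_{i \in I} L^i$, with quaternion factors $B^i / L^i$. By hypothesis exactly one index $i_0$ has $B^{i_0}$ division; set $n^0 = [L^{i_0} : F]$ and $n^1 = n - n^0$. The division factor $B^{i_0}$ embeds into the central division algebra $D$ over $F$ of degree $2n^0$ (dimension count: $\dim_F B^{i_0} = 4n^0$, and $B^{i_0}$ being division over $L^{i_0}$ of degree $[L^{i_0}:F]=n^0$ forces, via local invariants, $D \iso D_{1/2n^0}$ as the unique one admitting such an embedding), while the remaining split factors assemble into an embedding into $M_{2n^1}(F)$. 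Composing $β_δ$ with $B_δ \to D_{1/2n^0} \times M_{2n^1}(F)$ gives a pair of embeddings into $\End^0(\mbX)$ for the unique (up to isogeny) $\mbX$ with $\mr{ht}(\mbX^0) = 2n^0$, $\mr{ht}(\mbX^1) = 2n^1$; its invariant is $δ$ by the universal property. For uniqueness: if $(\mbX', β')$ is another such pair with $\Inv(β') = δ$, the already-proved forward direction together with the computation of the order shows its division part must have height $2n^0$, hence $\mbX' \iso \mbX$ up to isogeny; and on each factor the pair is then determined up to conjugation by Prop. \ref{prop:regular_semi_simple} (over $D_{1/2n^0}$) and by the same proposition over $M_{2n^1}(F)$ — noting that a regular semisimple pair into the matrix algebra exists and is unique up to conjugation because $δ|_{n^1}$ has order $0$.

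The main obstacle I expect is the bookkeeping around local invariants: one must verify that the division factor forces $D_{1/2n^0}$ specifically (not some other quaternion-type division algebra), i.e.\ that the Hasse invariants glue correctly from the $L^i$-level quaternion algebra $B^{i_0}$ to an $F$-central division algebra of degree $2n^0$, and that this algebra has Hasse invariant $1/2n^0$ rather than, say, $-1/2n^0$. This is the point where one uses the specific structure of endomorphism rings of height-$2n^0$, dimension-$1$ connected strict $π$-divisible $O_F$-modules (the Dieudonné/shtuka classification pins down $\End(\mbX^0) \iso O_{D_{1/2n^0}}$ on the nose), together with the compatibility of the corestriction of Brauer classes from $L^{i_0}$ to $F$ with the order-one condition. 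Everything else — multiplicativity of $\Inv$, the centralizer-is-a-field fact inside a division algebra, and the split-factor vanishing — is routine given the results already cited.
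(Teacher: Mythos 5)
Your route is the same as the paper's: decompose along the connected--étale splitting $\End^0(\mbX)\iso D_{1/2n^0}\times M_{2n^1}(F)$, use multiplicativity of the invariant, match the division and split factors of the universal algebra $B_{\delta}$ from Prop.~\ref{prop:universal_pair} with the two parts, get uniqueness factorwise from Prop.~\ref{prop:regular_semi_simple}, and cite Dieudonné--Manin for the bijection between the pairs $(2n^0,2n^1)$ and the isogeny classes of $\mbX$; this is exactly how the paper argues, only more tersely. One step of yours, however, is invalid as written: in the forward direction you conclude that the quaternion factors attached to $\beta^1$ are split ``because they embed into a matrix algebra''. Every quaternion algebra over $L^i$ embeds into some $M_N(F)$ (regular representation), so the size of the matrix algebra is essential. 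The correct statement is the criterion the paper records after Def.~\ref{def:algebraic_order}: $B^i\iso M_2(L^i)$ iff $B^i$ embeds into $M_{2[L^i:F]}(F)$; and to see that the relevant size really is $2[L^i:F]$ you must invoke regular semisimplicity: the centralizer of $\beta^1$ in $M_{2n^1}(F)$ is étale of degree $n^1$ (isomorphic to $L_{\delta^1}$), which forces $F^{2n^1}$ to be a multiplicity-free module over $B_{\delta^1}$ in which each factor $B^i$ acts on a subspace of $F$-dimension exactly $2[L^i:F]$; a division factor would need $4[L^i:F]$ dimensions and would produce a noncommutative centralizer. The same multiplicity-one bookkeeping is what guarantees, in your converse construction, that the pair built from the split factors into $M_{2n^1}(F)$ has invariant exactly $\delta^1$. (On the division side, the missing clause is simply that $B_{\delta^0}$ is simple and maps unitally, hence injectively, into a division algebra, so is itself division.)

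The point you single out as the main obstacle is in fact a non-issue, and your parenthetical claim that $D_{1/2n^0}$ is ``the unique'' degree-$2n^0$ division algebra admitting an embedding of $B^{i_0}$ is false: $B^{i_0}$ embeds into $D_{k/2n^0}$ for every $k$ prime to $2n^0$. But no gluing or corestriction of Brauer classes is needed: you only need an embedding into the \emph{given} algebra $\End^0(\mbX^0)$, and that exists because the centralizer of any embedding $L^{i_0}\hrighta \End^0(\mbX^0)$ is central simple of degree $2$ over $L^{i_0}$ with invariant $n^0\cdot\tfrac{1}{2n^0}=\tfrac12$, i.e.\ the unique quaternion division algebra over the local field $L^{i_0}$, hence isomorphic to $B^{i_0}$; in particular the sign of the Hasse invariant of $\End^0(\mbX^0)$ is irrelevant. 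With these two repairs your argument coincides with the paper's proof, which compresses all of this into the assertion that an $F$-algebra map $B_{\delta}\to D_{1/2n^0}\times M_{2n^1}(F)$ of the type corresponding to a pair of invariant $\delta$ exists iff there is a unique division index $i_0$ and $[L^{i_0}:F]=n^0$.
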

\begin{proof}
Write $L_δ = \prod L^j$ and $B_δ = \prod B^j$ as in Def. \ref{def:algebraic_order}. As seen during the proof of Cor. \ref{cor:classification_for_derivative}, there exists an $F$-algebra embedding $B_δ \to D_{1/2n^0} \times M_{2n^1}(F)$ if and only if there is a unique index $j_0$ with $B^{j_0}$ a division algebra and $[L^{j_0}:F] = n^0$. The pairs $(2n^0, 2n^1)$ with $n^0 + n^1 = n$ on the other hand are in bijection with the possible isogeny classes of $\mbX$ (Dieudonné--Manin classification). Finally, any embedding of $B_δ$ into $D_{1/2n^0}\times M_{2n^1}(F)$ has to be a product of two embeddings
$$B^{j_0}\lr D_{1/2n^0},\quad \prod_{j\neq j_0} B^j\lr M_{2n^1}(F)$$
and hence all such embeddings are conjugate by Skolem--Noether. This proves the uniqueness up to isogeny.
\end{proof}
\begin{rmk}\label{rmk:Drinfeld}
An analogous uniqueness statement holds for local Shimura data of EL type for inner forms of $GL_{2n}(F)$, see \cite{Li_Mih}*{Prop. 4.5}. It would be interesting to know if there is a more group-theoretic explanation for this phenomenon.
\end{rmk}

Let $M\to \Spf O_{\breve E}$ denote the RZ space of $\mbX$, that is
\begin{equation}
M(S) = \left\{(X, ρ)\ \left\vert\ \text{\begin{varwidth}{\textwidth}$X/S$ a strict $π$-divisible $O_F$-module\\
$ρ:\ov{S}\times_S X \to \ov{S}\times_{\Spec \mbF} \mbX$ a quasi-isogeny\end{varwidth}}\right\}\right..
\end{equation}
Here, $\ov{S} := \mbF\tensor_{O_{\breve E}} S$ denotes the special fiber of $S$. Analogously define, for $i = 1,2$,
\begin{equation}
Z(β_i)(S) = \left\{(X, ρ)\ \left\vert\ \text{\begin{varwidth}{\textwidth}$X/S$ a strict $π$-divisible $O_{E_i}$-module\\
$ρ:\ov{S}\times_S X \to \ov{S}\times_{\Spec \mbF} (\mbX, β_i)$ an $O_{E_i}$-linear quasi-isogeny\end{varwidth}}\right\}\right..
\end{equation}
The above spaces are well-known to be formally of finite type and formally smooth over $\Spf O_{\breve E}$. The relative dimension of $M$ is $2n-1$, that of the $Z(β_i)$ is $n-1$. Moreover, the reduced subschemes $M_{\mr{red}}$ and $Z(β_i)_{\mr{red}}$ are $0$-dimensional. The forgetful maps $Z(β_1), Z(β_2)\to M$ are closed immersions and allow to view the $Z(β_i)$ as cycles on $M$. 

\subsection{Hecke correspondences}

Throughout this article, we work with a non-standard definition of integral models of Hecke operators that is derived from our non-standard presentation \eqref{eq:Hecke_algebra_generators} of the Hecke algebra. Therefore, we begin with some general remarks on Hecke correspondences that justify this approach and provide the comparison with the definitions in \cite{Li} and  \cite{HL}.

Write $Z^c(M)$ for the group of cycles on $M$ of codimension $c$, similarly for $M\times_{\Spf O_{\breve E}} M$. By correspondence on $M$ we mean an element of
\begin{equation}
\mr{Corr}(M) := \left\{ z\in Z^{2n-1}(M\times_{\Spf O_{\breve E}} M) \left\vert \ p_1,p_2:\Supp(z) \to M\text{ both finite}\right\}\right..
\end{equation}
Correspondences form a ring through convolution and act on $Z^\bullet(M)$; we refer to Appendix \S\ref{s:appendix_loc_inter} for these definitions and their properties.

Recall that $\mcH$ denotes the spherical Hecke algebra of $GL_{2n}(F)$ with respect to $GL_{2n}(O_F)$. Given an indicator function $1_μ\in \mcH$, there is a definition of Hecke correspondence $[R(1_μ)] \in \mr{Corr}(M)$. It is uniquely characterized by the requirement that its generic fiber parametrizes pairs $\left((X_1,ρ_1), (X_2, ρ_2)\right)$ such that $ρ_2^{-1}ρ_1:X_1\to X_2$ is of type $μ$. The general definition of $[R(f)]$, $f\in \mcH$ is obtained by linear extension. Given cycles $z_1,z_2$ on $M$ of complementary dimension and such that $\Supp [R(f)] \cap (\Supp z_1 \times_{\Spf O_{\breve E}} \Supp z_2)$ is artinian, there is an intersection number $(z_1, [R(f)]*z_2)$. In this way, the Hecke action on cycles on $M$ and the resulting intersection numbers are unambiguously defined.

An explicit construction of $[R(f)]$ in terms of formal scheme models $R(f)\to M\times M$ is given in \cite[\S6.2]{Li} or \cite[\S4.3]{RSZ_unitary}, it relies on the notion of Drinfeld level structure. By Prop. \ref{prop:correspondences}, the Hecke action $[R(f)]:Z^\bullet(M) \to Z^\bullet(M)$ is then expressed as
$$[R(f)]*z = p_{1,*}[R(f)\times_{M\times M} p_2^{-1}(z)].$$
(The bracket notation indicates passage to the cycle.) Assume furthermore that $Z_1, Z_2\subset M$ are closed formal subschemes of pure complementary dimension, both defined by regular sequences, and such that $R(f) \cap (Z_1 \times_{\Spf O_{\breve E}} Z_2)$ is artinian. The mentioned models $R(f)$ are even regular, so Cor. \ref{cor:cycle_resolution_intersection_number} implies
\begin{equation}\label{eq:intersection_is_length}
\left([Z_1], [R(f)]*[Z_2]\right) = \ell_{O_{\breve E}} \big(\mcO_{R(f)\times_{M\times M} (Z_1 \times Z_2)}\big).
\end{equation}
This provides the promised link of the intersection numbers from \cite{Li, Li_future} (given by the RHS) with the canonical ones on the LHS. We now come to our definition of Hecke correspondences, which, by the above arguments, lead to the same intersection numbers.

\begin{defn}\label{def:Hecke_simple}
\begin{enumerate}[wide, labelindent=0pt, labelwidth=!, label=(\arabic*), topsep=2pt, itemsep=2pt]
\item Given $m\geq 0$, denote by $(p_1, p_2):R(m) \to M\times_{\Spf O_{\breve E}} M$ the functor
\begin{equation}
R(m)(S) = \left\{\left(X_0\overset{φ}{\to} X_1, ρ\right)\ \left\vert\ \text{\begin{varwidth}{\textwidth}$(X_1,ρ)\in M(S)$,\\
$φ$ an $O_F$-linear isogeny of height $m$\end{varwidth}}\right\}\right..
\end{equation}
The map to $M\times_{\Spf O_{\breve E}} M$ is given by
$$\left(X_0\overset{φ}{\to} X_1, ρ\right)\longmapsto \left((X_0, ρ\circ φ), (X_1, ρ)\right).$$
It is a closed immersion that identifies $R(m)$ with the locus of those pairs $((X_0, ρ_0), (X_1, ρ_1))$ such that the quasi-isogeny $ρ_1^{-1}ρ_0:X_0\to X_1$ is an isogeny of degree $m$. 
\item For a tuple $m = (m_1,\ldots,m_r)\in \mbZ_{\geq 0}$, we define
$$R(m) := R(m_1) \underset{p_2, M, p_1}{\times} R(m_2) \underset{p_2, M, p_1}{\times} \cdots \underset{p_2, M, p_1}{\times} R(m_r)$$
by composition. In other words, $R(m)$ represents the functor
\begin{equation}
R(m)(S) = \left\{\left(X_0\overset{φ_1}{\to} X_1\overset{φ_2}{\to} \ldots \overset{φ_r}{\to} X_r, ρ \right)\ \left\vert\ \text{\begin{varwidth}{\textwidth}Each $X_i$ a $π$-divisible strict $O_F$-module,\\
$(X_r,ρ)\in M(S)$,\\
$φ_i$ an $O_F$-linear isogeny of height $m_i$\end{varwidth}}\right\}\right..
\end{equation}
We often write $(X_\bullet, ρ)$ for such chains of isogenies. The map to $M\times_{\Spf O_{\breve E}} M$ takes the chain $(X_\bullet, ρ)$ to $((X_0, ρ\circ φ_r\circ \ldots \circ φ_1), (X_r, ρ))$.

\item For $k\in \mbZ$, multiplication in the framing defines an automorphism
$$[π]^k:M\longrightarrow M,\ \ (X,ρ)\longmapsto (X, π^kρ).$$
Taking its graph, we view $[π]^k:M\to M\times_{\Spf O_{\breve E}} M$ as correspondence.
\end{enumerate}
\end{defn}

\begin{prop}\label{prop:Hecke_op_flat}
For any $m = (m_1,\ldots,m_r)\in \mbZ_{\geq 0}$, the formal scheme $R(m)$ is Cohen--Macaulay and the two projection maps $p_1,p_2:R(m)\to M$ finite flat. In particular, Def. \ref{def:Hecke_simple} provides a model for the Hecke correspondence,
$$\left[[π]^k\underset{p_2, M, p_1}{\times} R(m)\right] = \left[R\left([π]^k*f(m)\right)\right]\ \ \text{in $\mr{Corr}(M)$}.$$
\end{prop}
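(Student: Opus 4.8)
The plan is to reduce first to the case $r=1$, then to prove that a single $R(m)\to M$ is finite flat (which then forces Cohen--Macaulayness), and finally to deduce the identity of correspondences formally. For the reduction, observe that $M$ is regular: each of its connected components is $\Spf$ of a power series ring over the complete discrete valuation ring $O_{\breve E}$. It therefore suffices to treat $r=1$: if each $R(m_i)\to M$ is finite flat on both sides, then $R(m_1)\times_{p_2,M,p_1}R(m_2)\to R(m_1)$ is the base change of a finite flat morphism along a flat one, hence finite flat; iterating, $R(m)\to M$ is finite flat via both $p_1$ and $p_2$, and being finite flat over the regular scheme $M$ it is automatically Cohen--Macaulay. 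So everything reduces to showing that, for $m\geq 0$, the projection $p_2:R(m)\to M$ (and symmetrically $p_1$) is finite flat.

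For finiteness I would use the complementary isogeny. Given $\varphi:X_0\to X_1$ of height $m$ over $S$, its kernel is finite locally free of order $q_F^m$, hence killed by $\pi^m$, so $[\pi^m]_{X_0}$ factors as $\varphi^\dagger\circ\varphi$ for a unique $O_F$-linear $\varphi^\dagger:X_1\to X_0$. One checks that $\varphi\circ\varphi^\dagger=[\pi^m]_{X_1}$, that $K:=\ker\varphi^\dagger=\varphi(X_0[\pi^m])\subseteq X_1[\pi^m]$ is finite locally free of order $q_F^{(2n-1)m}$, that $X_0=X_1/K$, and that $\varphi$ is recovered as the canonical map $X_1/K\to X_1$ induced by $[\pi^m]_{X_1}$. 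This gives a natural identification of $R(m)$ with the functor over $M$ sending $S$ to the set of finite locally free $O_F$-submodule schemes $K\subseteq X[\pi^m]$ of order $q_F^{(2n-1)m}$, where $X$ is the universal object. Since $X[\pi^m]$ is finite locally free over $M$, this functor is representable by a closed subscheme of the relevant Quot scheme of $X[\pi^m]/M$, which is projective over $M$; so $p_2$ is proper, and its geometric fibers are finite (they compute the finitely supported Hecke function $f(m)$: over an algebraically closed field only finitely many $F$-stable lattices of the prescribed relative position occur — here the one-dimensionality of $\mbX$ is used, so that $F$ has cokernel of length $1$). Hence $p_2$ is finite. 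The symmetric argument, using $\varphi$ in place of $\varphi^\dagger$ so that the $p_1$-fiber over $X_0$ parametrizes $O_F$-submodule schemes of $X_0[\pi^m]$ of order $q_F^m$, shows $p_1$ is finite.

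It remains to prove that $p_2:R(m)\to M$ is flat, and this is the heart of the matter. By miracle flatness — $M$ is regular and the fibers of $p_2$ are zero-dimensional — it suffices to show that $R(m)$ is Cohen--Macaulay and equidimensional of dimension $\dim M=2n$. I would establish this by an explicit local deformation computation: at a geometric point the completed local ring of $R(m)$ pro-represents the deformation functor of the pair $(X_1,\,K\subseteq X_1[\pi^m])$, and because $\mbX$ is one-dimensional the Hodge filtration of the Dieudonné crystal (or Zink display) of $X_1$ is a single hyperplane in a free module of rank $2n$, with $K$ imposing a compatible partial flag condition. Writing this deformation problem out via Grothendieck--Messing / Zink's theory presents the local ring as a quotient of a power series ring over $O_{\breve E}$ by an explicit ideal, which I expect to be generated by a regular sequence — equivalently, $R(m)$ to be a ``linear modification'' of $M$ — yielding Cohen--Macaulayness in the correct dimension. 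Alternatively one can import the known regularity of the Drinfeld-level-structure models of \cite[\S6.2]{Li} and \cite[\S4.3]{RSZ_unitary} through a comparison with the present model. This Cohen--Macaulayness is the step I expect to be the main obstacle; the dimension count and equidimensionality (using flatness of $R(m)$ over $O_{\breve E}$ and the equidimensional generic fiber over $M$) are then routine, and miracle flatness gives flatness of $p_2$, hence of $p_1$ by symmetry.

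Finally, for the displayed identity: $[\pi]^k$ is an automorphism of $M$, so $[\pi]^k\times_{p_2,M,p_1}R(m)$ is again finite flat over $M$ via both projections and defines a class in $\mr{Corr}(M)$. Since $M$, hence $R(m)$, is flat over $O_{\breve E}$, the associated cycle has no component supported in the special fiber, so it equals the closure of its restriction to the rigid generic fiber. There the correspondence is finite étale over the generic fiber of $M$ and parametrizes pairs $((X_1,\rho_1),(X_2,\rho_2))$ for which $\rho_2^{-1}\rho_1:X_1\to X_2$ is an isogeny of height $m$ up to the framing twist by $[\pi]^k$ — i.e. pairs in relative position of type $[\pi]^k*f(m)$ — which is exactly the generic fiber of the Hecke correspondence $[R([\pi]^k*f(m))]$, and both are reduced there. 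A correspondence whose support is finite over $M$ and which has no vertical components is determined by its reduced generic fiber, so the two cycles coincide.
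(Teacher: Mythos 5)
Your skeleton is the same as the paper's (finiteness of the projections via a subgroup-scheme description of the fibres, then Cohen--Macaulayness plus zero-dimensional fibres plus miracle flatness, then the cycle identity read off from the generic fibre using flatness over $O_{\breve E}$), but the decisive step is precisely the one you leave open. You only say you \emph{expect} the local ideal of $R(m)$ to be generated by a regular sequence, you flag Cohen--Macaulayness as ``the main obstacle'', and your fallback --- importing regularity from the Drinfeld-level models of \cite{Li} and \cite{RSZ_unitary} via an unspecified comparison --- is not carried out. Without that step neither miracle flatness nor your reduction of the case $r>1$ gets off the ground, so as written this is a genuine gap. The paper closes it with a short counting argument that your sketch does not contain: $R(m)\to M\times_{\Spf O_{\breve E}}M$ is a closed immersion into a regular formal scheme of dimension $4n-1$, and on the square-zero thickening $V(I^2)$ the ideal is cut out by the vanishing of the composite $F_1\to D_{X_2}\to\Lie(X_2)$ furnished by Grothendieck--Messing theory applied to the universal isogeny, cf. \eqref{eq:diagram_Hecke_flat}; since $F_1$ is a bundle of rank $2n-1$ and $\Lie(X_2)$ a line bundle, $R(m)$ is locally defined by at most $2n-1$ equations, whence $\dim R(m)\geq 2n$ everywhere by Krull's height theorem, while finiteness of $p_1$ (zero-dimensional special fibre, since the connected $1$-dimensional part has at most one subgroup of each order) gives $\dim R(m)\leq\dim M=2n$. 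So $R(m)$ is pure of dimension $2n$ and is locally a quotient of a regular, hence Cohen--Macaulay, local ring by $2n-1$ elements dropping the dimension by exactly $2n-1$; these then form a regular sequence (Lem. \ref{lem:Cohen_Macaulay_simple}), and $R(m)$ is Cohen--Macaulay. No appeal to Drinfeld level structures is needed; supplying this count is what you must add to make your argument a proof.

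Two smaller points. In your description of the $p_2$-fibres as submodule schemes $K\subseteq X[\pi^m]$ of order $q_F^{(2n-1)m}$ (and likewise for $p_1$) you must additionally impose that $O_F$ acts \emph{strictly} on $\Lie(X/K)$ --- this is not automatic when $F\neq\mbQ_p$ --- and, in the function field case, that $K$ is balanced so that $X/K$ is again a $\pi$-divisible $O_F$-module; both are closed (for balancedness, open and closed) conditions, so properness and finiteness survive, but the functor you wrote down is not yet $R(m)$. Finally, in the last paragraph a horizontal cycle is determined by its generic fibre \emph{with multiplicities}, not by its reduced generic fibre; your étaleness/reducedness remark does repair this, and then the conclusion agrees with the paper's terser argument that the identity is clear on generic fibres and extends because $R(m)$ is flat over $M$, hence over $O_{\breve E}$.
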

\begin{proof}
First consider the case $r = 1$ and the first projection $p_1:R(m)\to M$. Let $(X, ρ)\in M(S)$. The fiber of $p_1$ over $(X, ρ)$ is the set of $O_F$-linear isogenies $X\to Y$ of height $m$ to strict $π$-divisible $O_F$-modules $Y$, up to isomorphism in $Y$. Considering the occurring kernels, this set is in bijection with certain finite locally free subgroups $H\subset X$ that we now describe.

In the $p$-adic setting, $H$ has to be $O_F$-stable, of order $q_F^m$, and such that $O_F$ acts on $X/H$ strictly, meaning $ι(a) = a$ in $\End(\Lie(X/H))$. Varying $S$ and $(X,ρ)$, this description shows that $p_1$ is relatively representable in projective schemes.

In the function field setting, we still consider $O_F$-stable $H$ of order $q_F^m$. We also need to make sure that $X/H$ is again a $π$-divisible $O_F$-module however. This is the case if and only if each truncation $(X/H)[π^r]$, $r\geq 0$, is balanced as $\mbF_{q_F}$-module scheme. (Being balanced is a condition on the primitive elements in the Hopf algebra of the group scheme, cf. \cite[Def. 5.13]{P}. This property is equivalent to being a strict $\mbF_q$-module scheme, cf. \cite[Rmk. 5.3]{HS}. It is required for the truncations of a $π$-divisible $O_F$-module by definition, cf. \cite[Def. 7.1]{HS}.) We are given that all $X[π^r]$ are balanced. Then all $(X/H)[π^r]$ are balanced if and only if $H$ is balanced, because this property is well-behaved in exact sequences. The condition for a finite locally free group scheme to be balanced is open and closed on the base, cf. \cite[Rmk. 5.17]{P}. Finally, we impose that the $O_F$-action on $X/H$ is strict. In this way, we have again shown that $p_1$ is relatively representable in projective schemes.

Now consider the special fiber $\Spec \mbF\times_M R(m)\to \Spec \mbF$. It is of dimension $0$ because a $1$-dimensional $π$-divisible $O_F$-module over $\mbF$ has at most finitely many subgroup of order $q_F^i$. Thus $p_1$ is finite and we obtain $\dim R(m) \leq \dim M$.

The map $R(m)\to M\times_{\Spf O_{\breve E}} M$ is a closed immersion and $M\times_{\Spf O_{\breve E}} M$ is regular. We claim that $R(m)$ is locally defined by at most $2n - 1$ equations. Let $X_i = p_i^*(X)$ be the pullback of the universal $π$-divisible $O_F$-module from the $i$-th component, $i = 1,2$. Write $R(m) = V(I)$ for an ideal sheaf $I$ and consider the Hodge filtrations of $X_1\vert_{V(I^2)}$ and $X_2\vert_{V(I^2)}$ as well as the map on ($O_F$-linear) Grothendieck--Messing crystals coming from $φ$:\footnote{We refer to \cite[Thm. 9.8]{HS} for the fact that the deformation theory of $π$-divisible $O_F$-modules in equal characteristic is the same as that in the $p$-adic setting.}
\begin{equation}\label{eq:diagram_Hecke_flat}
\xymatrix{
0 \ar[r]  & F_1 \ar[r] & D_{X_1}(V(I^2)) \ar[r] \ar[d]^{D_φ(V(I^2))}  & \Lie(X_1\vert_{V(I^2)}) \ar[r] & 0\\
0 \ar[r] & F_2 \ar[r] & D_{X_2}(V(I^2)) \ar[r] & \Lie(X_2\vert_{V(I^2)}) \ar[r] & 0.
}
\end{equation}
Note that $I/I^2 \subset \mcO_{V(I^2)}$ is defined by the condition that it is the minimal ideal $J\subset \mcO_{V(I^2)}$ such that the universal quasi-isogeny $ρ_2^{-1}ρ_1:X_1\vert_{V(I^2)}\to X_2\vert_{V(I^2)}$ is an isogeny mod $J$. (Here, $(X_1, ρ_1, X_2, ρ_2)$ denotes the universal point over $M\times_{\Spf O_{\breve E}} M$.) By Grothendieck--Messing theory, $I/I^2$ equals the ideal defined by the condition that the composite map $F_1\to \Lie(X_2\vert_{V(I^2)})$ in \eqref{eq:diagram_Hecke_flat} vanishes. The source is a vector bundle of rank $2n-1$, the target a line bundle. Thus $R(m)$ is locally defined by $2n - 1$ equations as claimed. In particular, also $\dim R(m) \geq \dim M$ everywhere, so $R(m)$ is pure of dimension $2n$. It follows from the smoothness of $M$ that $R(m)$ is Cohen--Macaulay.

The flatness of both $p_1$ and $p_2$ is now deduced from ``Miracle Flatness'': A local homomorphism $R\to S$ of noetherian local rings with $R$ regular and $S$ Cohen--Macaulay is flat if and only if its fiber dimension is constant, see \cite[00R4]{Stacks}.

The obtained properties now extend to general $r$: The composition of finite flat correspondences is finite flat, so both $p_1, p_2:R(m_1,\ldots,m_r)\to M$ are finite flat. Being finite flat over a regular local ring implies Cohen--Macaulay.

Finally, the identity $\left[[π]^k\times_{p_2,M,p_1} R(m)\right] = \left[R\left([π]^k*f(m)\right)\right]$ is clear for generic fibers. Since $R(m)$ is flat over $M$, it is also flat over $O_{\breve E}$, so the identity extends to cycles on $M$.
\end{proof}

\subsection{Intersection numbers}
\begin{defn}\label{def:intersection}
For $m = (m_1,\ldots, m_r)\in \mbZ_{\geq 0}$ and $β$ as before, we define $I(β,m)$ by the Cartesian diagram
\begin{equation}\label{eq:intersection_RZ}
\xymatrix{
I(β, m) \ar[r] \ar[d] \ar@{}[dr] | {\square} & Z(β_2)\times_{\Spf O_{\breve E}} Z(β_1) \ar[d]\\
R(m) \ar[r] & M\times_{\Spf O_{\breve E}} M.
}
\end{equation}
\end{defn}
\begin{lem}\label{lem:intersection_artinian}
Assume that $β$ is regular semi-simple. Then $I(β,m)$ is a (possibly infinite) union of artinian schemes.
\end{lem}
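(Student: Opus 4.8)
The plan is to show that $I(β,m)$ is a locally noetherian formal scheme whose reduced subscheme is $0$-dimensional; its connected components are then the formal spectra of artinian local rings, of which there may be infinitely many. I note at the outset that only the fact $\dim \mbX = 1$ is really used here, namely through the $0$-dimensionality of $M_{\mr{red}}$ and of the $Z(β_i)_{\mr{red}}$ recalled above. Regular semi-simplicity of $β$ will become essential only later, for the stronger statement that $Γ\backslash(Z(β_1)\cap Z(β_2))$ is artinian, i.e.\ that the union in question is \emph{finite} modulo $Γ$.

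First I would observe that the left vertical arrow $I(β,m)\to R(m)$ in \eqref{eq:intersection_RZ} is a closed immersion. Indeed, the forgetful maps $Z(β_i)\to M$ are closed immersions, hence so is their fibre product $Z(β_1)\times_{\Spf O_{\breve E}}Z(β_2)\to M\times_{\Spf O_{\breve E}}M$ over $\Spf O_{\breve E}$, and $I(β,m)\to R(m)$ is obtained from the latter by base change along $R(m)\to M\times_{\Spf O_{\breve E}}M$. Since $R(m)$ is locally noetherian and formally of finite type over $\Spf O_{\breve E}$ (it is finite flat over $M$ by Proposition~\ref{prop:Hecke_op_flat}), the same holds for its closed formal subscheme $I(β,m)$. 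Next I would bound the dimension of the reduced subscheme: by Proposition~\ref{prop:Hecke_op_flat} the projection $R(m)\to M$ is finite, hence $R(m)_{\mr{red}}\to M_{\mr{red}}$ is finite; as $M_{\mr{red}}$ is $0$-dimensional, so is $R(m)_{\mr{red}}$, and a fortiori its closed subscheme $I(β,m)_{\mr{red}}$ is $0$-dimensional.

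To conclude, I would invoke the general fact that a locally noetherian formal scheme with $0$-dimensional reduced subscheme is the disjoint union of its points, each of which is $\Spf$ of a complete noetherian local ring of Krull dimension $0$, i.e.\ of an artinian local ring. This yields the asserted decomposition of $I(β,m)$, with possibly infinitely many terms, since $R(m)_{\mr{red}}$ is finite over the typically infinite discrete scheme $M_{\mr{red}}$. For use in the sequel one records in passing that the residue fields occurring are finite over the algebraically closed residue field $\mbF$ of $O_{\breve E}$, hence equal to it, so that each of these artinian rings has finite length over $O_{\breve E}$ and $\ell_{O_{\breve E}}(\mcO_{I(β,m)})$ makes sense componentwise.

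The argument is essentially formal once the two inputs recalled above are granted, namely that $Z(β_1)\times_{\Spf O_{\breve E}}Z(β_2)\to M\times_{\Spf O_{\breve E}}M$ is a closed immersion and that $M_{\mr{red}}$ is $0$-dimensional, so I do not expect a genuine obstacle. The only point deserving a moment's attention is the final step: unwinding ``$0$-dimensional locally noetherian formal scheme'' into ``disjoint union of artinian schemes'', together with the finite-length remark.
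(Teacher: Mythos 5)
Your argument breaks at the final step, and the gap is the whole content of the lemma. From the $0$-dimensionality of $I(β,m)_{\mr{red}}$ you conclude that each component of $I(β,m)$ is $\Spf$ of a complete noetherian local ring of Krull dimension $0$. For formal schemes that are merely \emph{formally} of finite type over $\Spf O_{\breve E}$ this implication is false: the ideal of definition need not be nilpotent, so the dimension of the reduced subscheme says nothing about the Krull dimension of the local rings. The space $M$ itself is the standard counterexample: $M \iso \coprod \Spf O_{\breve F}[\![t_1,\ldots,t_{2n-1}]\!]$ has $0$-dimensional reduced locus while its local rings have dimension $2n$; your argument applied verbatim to $M$, to $Z(β_i)$, or to $R(m)$ would ``prove'' that these are unions of artinian schemes, which they are not. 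For the same reason regular semi-simplicity cannot be postponed as you claim: if $E_1=E_2$ and $β_1=β_2$, then $Z(β_1)\times_M Z(β_2)=Z(β_1)$ is a positive-dimensional formal scheme with $0$-dimensional reduced subscheme, so the conclusion of the lemma genuinely fails without the hypothesis. The point of the lemma is that the infinitesimal (formal) directions are killed in the intersection, not that the reduced locus is discrete.

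The paper's proof is correspondingly a real deformation-theoretic argument. Given an affine closed formal subscheme $\Spf A\subseteq I(β,m)$, one algebraizes the universal chain over $\Spec A$ and uses regular semi-simplicity to see that the induced pair of embeddings \eqref{eq:pair_geometric} generates an order in a quaternion algebra $B$ over an étale algebra $L$ of degree $n$ with $B\not\iso M_2(L)$, since $B$ embeds into $\End^0(\mbX)$. This first forces $A$ to be $π$-power torsion: in generic characteristic the $π$-divisible module would be étale with endomorphism ring $M_{2n}(O_F)$, which admits no pair of this invariant. Then, using Lem.~\ref{lem:characterization_of_cases} to see that all fibers lie in the isogeny class of $\mbX$, one shows that every point $\Spec R\to\Spec A$, with $R$ the ring of integers of a complete algebraically closed non-archimedean field, factors through the closed point of $M$, by explicitly constructing an isogeny $X_R\to\mbX_R$ (normalizing the connected part so that $[π](t)=t^{q^{2n^0}}$, whence it is constant). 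That is what makes the local rings artinian; a dimension count of reduced subschemes cannot replace it.
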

\begin{proof}
This follows a posteriori from the length computations in \cite{Li, HL}. We give an alternative a priori argument. Let $\Spf A \subset I(β,f)$ be an affine closed formal subscheme, given by some chain $(X_\bullet, ρ) \in R(m)(\Spf A)$. Then $X := X_r$ algebraizes to a strict $π$-divisible $O_F$-module $X$ over $\Spec A$ that comes with a pair of embeddings
\begin{equation}\label{eq:pair_geometric}
(ρ^{-1}β_1ρ,\ (ρφ_r\cdots φ_1)^{-1}β_2(ρφ_r\cdots φ_1)):(O_{E_1},O_{E_2}) \lr \End^0(X).
\end{equation}
The intersection of its image with $\End(X)$ generates an order in a quaternion algebra $B$ over an étale $F$-algebra $L$ of degree $n$ because $β$ is regular semi-simple, see Prop. \ref{prop:universal_pair}. Moreover $B\not\iso M_2(L)$ because $B$ embeds into $\End^0(\mbX)$. Our task is to show that $ρ$ algebraizes, i.e. that $(X_\bullet, ρ)\in R(m)(\Spec A)$.

It is well-known that $p$-divisible groups in characteristic $0$ are étale. The analogous statement holds in the function field setting. Namely let $X/S$ be a strict $π$-divisible $O_F$-module. The three properties $X$ being $π^\infty$-torsion, $π\in \mcO_S^\times$ and $ι(π) = π$ in $\End(\Lie(X))$ imply $\Lie(X) = 0$. The only strict $π$-divisible $O_F$-module of height $2n$ over a geometric point with $π\neq 0$ is thus $(F/O_F)^{2n}$ (up to isomorphism). Its endomorphism ring is $M_{2n}(O_F)$ and there is no possibility for a pair of invariant $\mr{Inv}(β)$ as in \eqref{eq:pair_geometric}. Thus $A$ has to be $π^k$-torsion, $k\gg 0$.

Concerning $O_F$-fields with $π = 0$, the geometric isogeny class of $\mbX$ is the only one of height $2n$ and dimension $1$ that admits a pair of embeddings of invariant $\mr{Inv}(β)$, see Lem. \ref{lem:characterization_of_cases}. Thus all fibers of $X$ are of the same geometric isogeny class. In particular, the connected component of identity $X^0$ is again a strict $π$-divisible $O_F$-module, cf. \cite[Prop. II.4.9]{Messing} ($p$-adic case) and \cite[Prop. 10.16]{HS} (function field case).

Let $A \to R$ be a local homomorphism to the ring of integers in a complete, algebraically closed, non-archimedean field $K$. Denote its residue field by $k$. We use index notation $X_R$, $X_K$, $X_k$ etc. to denote base changes to $R$, $K$, $k$ etc. The map
$$\End(\mbX_R) \lr \End(\mbX_K) = O_{D_{1/2n^0}} \times M_{2n^1}(O_F)$$
is injective, so the reduction map
$$\End(\mbX_R) \lr \End(\mbX_k)$$
is a bijection. If we manage to find an isogeny $τ:X_R \to \mbX_R$, then the composition $(ρτ^{-1}) τ:X_R \to \mbX_R$ is a $(\Spec R)$-valued point of $M$ that agrees with the given $\Spf R$-valued one. But any map $\Spec R \to M$ factors through one of the closed points of $M$ because $M$ is a disjoint union of formal spectra of complete local rings. Since $R$ was arbitrary, this implies the statement of the lemma.

Since $R$ is perfect, we may write $X_R = X_R^0 \times X_R^1$ and construct $τ$ component-wise
$$τ^0:X_R^0\lr \mbX_R^0,\ \ \ τ^1:X_R^1\lr \mbX_R^1.$$
Both $X_R^1$ and $\mbX_R^1$ are isomorphic to $(F/O_F)^{2n^1}$ and we take $τ^1$ as any isomorphism. For $τ^0$, we consider $X^0_R \iso \Spf R[\![t]\!]$ as a formal group. (See \cite[Cor. 10.12]{HS} for the equivalence of formal $O_F$-modules and connected strict $π$-divisible $O_F$-modules.) Since special and generic fiber of $X^0_R$ are both of height $2n^0$, and since $K$ is algebraically closed, we may normalize the coordinate such that multiplication by $π\in O_F$ is expressed by
$$[π](t) = t^{q^{2n^0}}.$$
The power series describing addition and the $O_F$-action all commute with $[π](t)$ and now have coefficients in the finite field $\mbF_{q^{2n^0}}$. In other words, $X^0_R$ comes via base change from $\mbF$ and is thus isomorphic to $\mbX^0_R$.
\end{proof}

Let $L := L(β) \subset \End^0(\mbX)$ denote the centralizer of $β_1(E_1)\cup β_2(E_2)$. Its units $L^\times$ act on all objects in \eqref{eq:intersection_RZ} by transport of structure along $ρ$.

\begin{lem}\label{lem:finiteness_intersection}
Assume that $β$ is regular semi-simple. Then the quotient $L^\times \backslash π_0(I(β,m))$ is finite.
\end{lem}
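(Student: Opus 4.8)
The plan is to pass to $\mbF$-points, rewrite everything in terms of lattice chains in the isocrystal of $\mbX$, reduce to the two endpoints of such a chain, and then extract the finiteness from the closedness of the orbit of $1$ provided by Corollary~\ref{cor:regular_semi_simple}. First I would record that $\pi_0(I(\beta,m)) = I(\beta,m)(\mbF)$: by Lemma~\ref{lem:intersection_artinian} the formal scheme $I(\beta,m)$ is a union of artinian local $O_{\breve E}$-schemes, and since it is formally of finite type over $O_{\breve E}$ (being closed in $R(m)$, which is finite over $M\times M$) while $\mbF$ is algebraically closed, each of those has residue field $\mbF$, so $(I(\beta,m))_{\mr{red}}$ is a disjoint union of copies of $\Spec\mbF$. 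Then I would set up the Dieudonné dictionary: writing $N$ for the isocrystal of $\mbX$ (rational Dieudonné module; in the function-field case via \cite{HS}) and $B = \End^0(\mbX)$ acting on $N$, covariant Dieudonné theory identifies $M(\mbF)$ with the set of Dieudonné $O_F$-lattices $\Lambda\subseteq N$, $Z(\beta_i)(\mbF)$ with those satisfying $\beta_i(O_{E_i})\Lambda\subseteq\Lambda$, and $R(m)(\mbF)$ with chains $\Lambda_0\subseteq\cdots\subseteq\Lambda_r$ with $\ell_{O_{\breve F}}(\Lambda_i/\Lambda_{i-1}) = m_i$. Unwinding Definition~\ref{def:intersection}, a point of $I(\beta,m)(\mbF)$ is such a chain with the extra properties $\beta_1(O_{E_1})\Lambda_0\subseteq\Lambda_0$ and $\beta_2(O_{E_2})\Lambda_r\subseteq\Lambda_r$, and $L^\times$ acts through its action on $N$.

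Next I would carry out the reduction to endpoints. The map $(\Lambda_\bullet)\mapsto(\Lambda_0,\Lambda_r)$ is $L^\times$-equivariant and lands in $\mathcal P := \{(\Lambda,\Lambda')\in Z(\beta_1)(\mbF)\times Z(\beta_2)(\mbF) : \Lambda\subseteq\Lambda',\ \ell_{O_{\breve F}}(\Lambda'/\Lambda) = |m|\}$; its fibres are finite, since a chain refining a fixed inclusion $\Lambda_0\subseteq\Lambda_r$ of finite colength is a flag of submodules of the finite-length module $\Lambda_r/\Lambda_0$, of which there are finitely many. So it suffices to show $L^\times\backslash\mathcal P$ is finite. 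I would then observe that over the algebraically closed field $\mbF$ all the $O_{E_i}$-modules occurring in $Z(\beta_i)(\mbF)$ are mutually isomorphic (the heights of their connected and étale parts are forced by the framing, and each is then unique up to isomorphism), so $H_i := \mr{Cent}_B(\beta_i(E_i))^\times$ acts transitively on $Z(\beta_i)(\mbF)$; fixing base points $\Lambda_i^\circ$ yields identifications $Z(\beta_i)(\mbF)\cong H_i(F)/\mathbf K_i$ with $\mathbf K_i\subseteq H_i(F)$ open compact, while $L^\times = H_1\cap H_2$ acts diagonally.

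The core step is the closed-orbit argument. Writing $\Lambda = h_1\Lambda_1^\circ$ and $\Lambda' = h_2\Lambda_2^\circ$ with $h_i\in H_i(F)$, the conditions defining $\mathcal P$ translate into $h_1^{-1}h_2\in\Omega$, where $\Omega = \{g\in B^\times(F) : g\Lambda_1^\circ\subseteq\Lambda_2^\circ,\ \ell_{O_{\breve F}}(\Lambda_2^\circ/g\Lambda_1^\circ) = |m|\}$ is a bi-$\mathbf K_i$-invariant \emph{compact} subset of $B^\times(F)$ (it is closed, and both $g$ and $g^{-1}$ are bounded on it). Since $\beta$ is regular semi-simple, Corollary~\ref{cor:regular_semi_simple} says the orbit of $1$ under $(h_1,h_2)\cdot g = h_1^{-1}gh_2$ is Zariski closed with stabilizer $L^\times$; the standard closed-orbit argument over a non-archimedean local field — the same fact that makes the integrand in Definition~\ref{def:orb_int_geom_side} compactly supported — then gives that the preimage of $\Omega$ under $q\colon(H_1\times H_2)(F)\to B^\times(F)$, $(h_1,h_2)\mapsto h_1^{-1}h_2$, is contained in $L^\times\cdot\mathbf C$ for some compact $\mathbf C\subseteq(H_1\times H_2)(F)$. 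Consequently $L^\times\backslash q^{-1}(\Omega)$ is relatively compact in $L^\times\backslash(H_1\times H_2)(F)$, and its image $L^\times\backslash\mathcal P$ in the discrete set $L^\times\backslash\bigl(H_1(F)/\mathbf K_1\times H_2(F)/\mathbf K_2\bigr)$ is finite, which completes the proof.

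The main obstacle I anticipate is not any single hard estimate but locating and applying the closed-orbit/properness input cleanly in this mildly non-standard setting: $\mbX$ need not be basic, so $B = \End^0(\mbX)$ can be non-split and the relevant lattices sit inside the isocrystal $N$ rather than in $F^{2n}$. Neither feature should cause real difficulty, since Corollary~\ref{cor:regular_semi_simple} is stated for an arbitrary central simple $B$ and passing from all $O_F$-lattices to Dieudonné lattices only shrinks $\mathcal P$; but the bookkeeping — checking that $\Omega$ is genuinely compact, and that $Z(\beta_i)(\mbF)$ is a single $H_i(F)$-orbit with open compact stabilizer — is where care is needed.
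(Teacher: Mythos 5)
Your argument is correct in substance but follows a genuinely different, more uniform route than the paper. The paper's proof first uses the canonical decomposition $X = X^0\times X^1$ of every $\mbF$-point into connected and étale parts to split $\pi_0(I(\beta,m))$ as $\coprod_{m=m^0+m^1}\pi_0(I(\beta^0,m^0))\times\pi_0(I(\beta^1,m^1))$ with $L^\times=(L^0\times L^1)^\times$ acting componentwise; this reduces the lemma to the two extreme cases, where the connected case is immediate ($\pi_0(M)\iso\mbZ$ via the height of the quasi-isogeny, and $\pi$ acts by translation by $2n$) and the étale case is exactly the compact-support argument underlying the convergence of the orbital integral \eqref{eq:def_orb_int_geom_side}, i.e.\ the same appeal to Cor.~\ref{cor:regular_semi_simple} that you make. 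You avoid the splitting altogether: you pass to Dieudonné lattice chains in the full isocrystal, reduce to the two endpoints (finitely many refining flags), use homogeneity of $Z(\beta_i)(\mbF)$ under $H_i(F)$, and run the closed-orbit/compactness argument once for all of $\End^0(\mbX)^\times$. What this buys is a single case-free argument that makes transparent that the finiteness is literally the same compactness as in Def.~\ref{def:orb_int_geom_side}; what the paper's route buys is that each of its two cases needs essentially no bookkeeping.

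Two small points in your write-up need attention, though neither is fatal. First, $Z(\beta_i)(\mbF)$ is not the set of all $\beta_i(O_{E_i})$-stable Dieudonné lattices: strictness of the $O_{E_i}$-action imposes a condition on the induced action on the Lie algebra of the connected part (already for height $2$ only every other lattice in the chain qualifies). This is harmless for finiteness, but your transitivity claim is the one that forces you to work with the genuine RZ points, for which $H_i(F)$-transitivity does hold by the classification of one-dimensional formal $O_{E_i}$-modules together with the étale part. Second, Cor.~\ref{cor:regular_semi_simple} is stated for a central simple algebra $B$, whereas $\End^0(\mbX)\iso D_{1/2n^0}\times M_{2n^1}(F)$ is only semisimple in the non-basic case at hand (your aside that the corollary covers ``an arbitrary central simple $B$'' does not literally apply); you need the easy supplement that orbit, stabilizer and Zariski-closedness decompose along the two simple factors, each component pair $\beta^j$ being again regular semi-simple since $\mr{Inv}(\beta)=\mr{Inv}(\beta^0)\mr{Inv}(\beta^1)$ — which is, in effect, the paper's connected–étale splitting in disguise. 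With these two remarks supplied (and the immaterial $h_1^{-1}h_2$ versus $h_2^{-1}h_1$ indexing fixed), your proof is complete.
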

\begin{proof}
Since $I(β,m)(\mbF) = π_0(I(β,m))$, this is purely a statement about $\mbF$-points. Given a $π$-divisible group $X$ over $\mbF$, we write $X = X^0 \times X^1$ for its decomposition into connected and étale part. Then every $\mbF$-point $(X, ρ)\in M(\mbF)$ is (in a unique way) a product $(X^0 \times X^1, ρ^0\times ρ^1)$, where $ρ^0:X^0\to \mbX^0$ and $ρ^1:X^1\to \mbX^1$ are two quasi-isogenies. Moreover, giving an isogeny $φ_i:X_{i-1}\to X_i$ of height $m_i$ is the same as giving a pair of isogenies
$$φ_i^0:X_{i-1}^0\lr X_i^0,\ \ \ φ_i^1:X_{i-1}^1 \lr X_i^1$$
with $\mr{ht}(φ_i^0) + \mr{ht}(φ_i^1) = m_i$. In this way, we obtain
\begin{equation}\label{eq:connected_comp_intersection}
π_0(I(β, m)) = \coprod_{m^0 + m^1 = m} π_0(I(β^0, m^0)) \times π_0(I(β^1, m^1))
\end{equation}
where $m^0,m^1 \in \mbZ_{\geq 0}^r$. Moreover, the centralizer is a product $L = L^0 \times L^1$ and its units act diagonally on the right of \eqref{eq:connected_comp_intersection}. This reduces us to the two cases of $\mbX$ being connected or étale.

If $\mbX$ is connected, then $π_0(M) \iso \mbZ$ via $(X, ρ)\mapsto \mr{ht}(ρ)$. The element $π\in F^\times$ acts through translation by $2n$, proving the statement in this case. If $\mbX$ is étale, then $π_0(M) \iso GL_{2n}(F)/GL_{2n}(O_F)$ and the argument is the same as for the convergence of \eqref{def:orb_int_geom_side}.
\end{proof}

In fact, $[R(f)]$ is invariant under the diagonal action of $\Aut^0(\mbX)$ on $M\times_{\Spf O_{\breve E}} M$ for every $f$. Lem. \ref{lem:intersection_artinian} and Lem. \ref{lem:finiteness_intersection} imply that $\Supp [R(f)] \times_{M\times M} (Z(β_2) \times Z(β_1))$ is artinian and finite modulo $L^\times$ whenever $β$ is regular semi-simple. 

\begin{defn}\label{def:int_number_for_AFL}
Let $β$ be regular semi-simple. Endow $L = L(β)^\times$ with the measure such that $μ(O_L^\times) = 1$. Let $Γ\subseteq L^\times$ be a cocompact subgroup that acts freely on $π_0(M)$. For $f\in \mcH$, define the intersection number
\begin{equation}
\label{eq:int_number_general}
\begin{aligned}
\mr{Int}(β, f) :=\ & μ(Γ\backslash L^\times)^{-1} \left(Γ\backslash [R(f)],\ Γ\backslash (Z(β_2) \times Z(β_1))\right)\\[3mm]
=\ & \sum_{x \in L^\times\backslash π_0\left(\Supp [R(f)] \times (Z(β_2) \times Z(β_1))\right)} μ(\mr{Stab}(x))^{-1} \left([R(f)]_x,\ (Z(β_2)\times Z(β_1))_x\right).
\end{aligned}
\end{equation}
The intersection of the first expression is taken on $Γ\backslash (M\times M)$. The intersections of the second happen on the connected component $x$. More concretely, write $f = \sum_{i\in I} λ_i \cdot [π]^{k_i}* f(m^{(i)})$ for some finite index set $I$. (Recall that the $f(m^{(i)})$ were defined in \eqref{eq:def_Hecke_iterated}.) Then, by Prop. \ref{prop:Hecke_op_flat} and Cor. \ref{cor:cycle_resolution_intersection_number},
$$\mr{Int}(β,f) = \sum_{i\in I} λ_i \cdot \Int(β, m^{(i)})$$
where the summands on the right hand side are defined by
\begin{equation}
\label{eq:stacky_length}
\begin{aligned}
\mr{Int}(β, m) :=\ & μ(Γ\backslash L^\times)^{-1} \ell_{O_{\breve E}} (\mcO_{Γ\backslash I(β,m)})\\[3mm]
=\ & \sum_{x \in L^\times\backslash π_0(I(β,m))} μ(\mr{Stab}(x))^{-1} \ell_{O_{\breve E}}(\mcO_{I(β,m),x}).
\end{aligned}
\end{equation}
\end{defn}
Here, we have also used the following analog of \eqref{eq:trivial_subst}:
\begin{equation}\label{eq:trivial_inter}
\Int(β,f) = \Int(β, [π]^k*f),\quad f\in \mcH,\ k\in \mbZ.
\end{equation}
Namely, $[π]^k*R(f)$ is defined by composing $R(f)$ with the graph of the automorphism $π^{-k}:M\to M$, $(X, ρ)\mapsto (X, π^{-k}ρ)$, and this automorphism maps $Z(β_1)$ to itself. 

\subsection{The AFL}
Recall that $E_0 = F\times F$, and that $E_3 \subset E_1\tensor_F E_2$ is the fixed ring under $σ_1\tensor σ_2$. Recall further that
\begin{equation}
\del(α, f) = \frac{1}{\log(q_F)} \left.\frac{d}{ds}\right\vert_{s = 0}O(α, f, s)
\end{equation}
denotes the normalized first derivative. By Cor. \ref{cor:classification_for_derivative} and Lem. \ref{lem:characterization_of_cases}, the following conjecture concerns precisely those invariants $δ$ with $\mr{ord}(δ) = 1$.
\begin{conj}\label{conj:AFL}
Let $α:(E_0,E_3)\to M_{2n}(F)$ be a regular semi-simple pair of embeddings that matches to a pair $β:(E_1, E_2)\to \End^0(\mbX)$. Then, for any $f\in \mcH$, there is an equality
$$\del(α, f) = \Int(β, f).$$
\end{conj}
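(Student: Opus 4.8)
\textbf{Setup.} The plan is to reduce Conjecture \ref{conj:AFL} to its basic case $\mbX = \mbX^0$, which has been formulated in \cite{Li, HL} and is known there for small $n^0$. By Cor.\ \ref{cor:classification_for_derivative} and Lem.\ \ref{lem:characterization_of_cases} we have $\mr{ord}(\mr{Inv}(\alpha)) = 1$, so I would write $n = n^0 + n^1$, $\mbX = \mbX^0 \times \mbX^1$ (connected times étale), $\beta = (\beta^0,\beta^1)$ and $\alpha = (\alpha^0,\alpha^1)$ factoring through $M_{2n^0}(F)\times M_{2n^1}(F)$, so that $\beta^0$ matches $\alpha^0$ and is valued in the division algebra $D_{1/2n^0}$, while $\beta^1$ matches $\alpha^1$ and both are valued in $M_{2n^1}(F)$. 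It then suffices to establish two compatible reduction formulas, one for each side of the conjectured identity, that isolate the $(\alpha^0,\beta^0)$-contribution; the remaining $(\alpha^1,\beta^1)$-factors will then be matched by the Guo--Jacquet Fundamental Lemma.

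\textbf{Analytic side.} I would feed $f$ through the partial Satake transform $\mcS$ of \eqref{eq:Satake} and apply Thm.\ \ref{thm:analytic_reduction}, which writes $O(\alpha,f,s)$ as $|\mr{Disc}_{E_3/F}|_F^{-n^0 n^1/2}\,|\mr{Res}(\mr{Inv}(\alpha^0),\mr{Inv}(\alpha^1))|_F^{-1}$ times $O^{\mr{Levi}}((\alpha^0,\alpha^1),\mcS(f),s)$. Since the characters $|\cdot|^s$ and $\eta$ are multiplicative along the Levi decomposition, for $\mcS(f) = \sum_j g_j^0\otimes g_j^1$ this Levi integral factors as $\sum_j O(\alpha^0,g_j^0,s)\,O(\alpha^1,g_j^1,s)$. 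Now $O(\alpha^0,g_j^0,0) = 0$ for every $j$ by Prop.\ \ref{prop:sign}, because $\alpha^0$ matches the division-algebra pair $\beta^0$; hence $O(\alpha,f,0) = 0$ as well (so that $\del(\alpha,f)$ is defined up to sign), and the Leibniz rule leaves
\begin{equation*}
\del(\alpha,f) = |\mr{Disc}_{E_3/F}|_F^{-n^0 n^1/2}\,|\mr{Res}(\mr{Inv}(\alpha^0),\mr{Inv}(\alpha^1))|_F^{-1}\sum_j \del(\alpha^0,g_j^0)\,O(\alpha^1,g_j^1,0).
\end{equation*}

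\textbf{Geometric side.} The connected-étale sequence $0\to X^0\to X\to X^1\to 0$ of the universal object over $M$ produces the fibration \eqref{eq:fibration_intro}, $M\to M^0\times_{\Spf O_{\breve E}}M^1$, realizing $M$ as the total space of the formal $O_F$-module $\Hom(T(Y^1),Y^0)$, and identifies $Z(\beta_i)$, over $Z(\beta_i^0)\times Z(\beta_i^1)$, with the sub-$O_{E_i}$-module $\Hom_{O_{E_i}}(T(W_i^1),W_i^0)$. Correspondingly $R(m)$ decomposes along $m = m^0 + m^1$ into the connected and étale parts (cf.\ \eqref{eq:connected_comp_intersection}), so that $I(\beta,m)$ fibers over $\big(Z(\beta_1^0)\cap Z(\beta_2^0)\big)\times\big(Z(\beta_1^1)\cap Z(\beta_2^1)\big)$, compatibly with the $R(m^0)$- and $R(m^1)$-conditions, the fiber being the scheme of extension classes in $\Hom_{O_F}(T(Y^1),Y^0)$ that intertwine both the $\beta_1$- and the $\beta_2$-module structures. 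A Smith normal form computation for this pair of linearity conditions --- the \emph{crux of the argument} --- should show that the fiber has length governed by the resultant of the two characteristic polynomials $\mr{Inv}(\beta^0)$ and $\mr{Inv}(\beta^1)$ describing the $\beta^0$- and $\beta^1$-actions, producing exactly the factor $|\mr{Disc}_{E_1/F}\mr{Disc}_{E_2/F}|_F^{-n^0 n^1/2}|\mr{Res}(\mr{Inv}(\beta^0),\mr{Inv}(\beta^1))|_F^{-1}$; tracking the $q$-power weights of \eqref{eq:Satake} and using that $M^1$ and the $Z(\beta_i^1)$ are formally étale (so $\mr{Int}(\beta^1,\cdot) = O(\beta^1,\cdot)$ literally), one arrives at the Hecke-algebra and biquadratic refinement of \eqref{eq:int_number_idintity_intro},
\begin{equation*}
\mr{Int}(\beta,f) = |\mr{Disc}_{E_1/F}\mr{Disc}_{E_2/F}|_F^{-n^0 n^1/2}|\mr{Res}(\mr{Inv}(\beta^0),\mr{Inv}(\beta^1))|_F^{-1}\sum_j \mr{Int}(\beta^0,g_j^0)\,O(\beta^1,g_j^1).
\end{equation*}

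\textbf{Conclusion and main obstacle.} The invariants match compatibly with the Levi factorization, so $\mr{Inv}(\alpha^j) = \mr{Inv}(\beta^j)$ and the resultant factors on the two sides agree; and since $E_1\otimes_F E_2/E_3$ is unramified, $|\mr{Disc}_{E_3/F}|_F = |\mr{Disc}_{E_1/F}\mr{Disc}_{E_2/F}|_F$, so the discriminant factors agree as well. By the Guo--Jacquet Fundamental Lemma (Conj.\ \ref{conj:FL}) for the split matching pair $(\alpha^1,\beta^1)$ one has $O(\alpha^1,g_j^1,0) = \pm O(\beta^1,g_j^1)$ with a single overall sign, so comparing the two displays shows that $\pm\del(\alpha,f) = \mr{Int}(\beta,f)$ is \emph{equivalent} to the basic AFL $\pm\del(\alpha^0,g_j^0) = \mr{Int}(\beta^0,g_j^0)$ for the functions $g_j^0$ occurring in $\mcS(f)$; this is Thm.\ \ref{thm:intro}, and Conjecture \ref{conj:AFL} itself follows whenever that basic case is available, e.g.\ when $\mr{ht}(\mbX^0)\le 4$. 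The hard part will be entirely the geometric step: setting up \eqref{eq:fibration_intro} rigorously, together with the identification of $M$ and the $Z(\beta_i)$ as $\Hom$-spaces, uniformly in the $p$-adic and function-field settings, and then carrying out the fiberwise linear algebra so that it reproduces precisely the resultant $|\mr{Res}(\mr{Inv}(\beta^0),\mr{Inv}(\beta^1))|_F^{-1}$ and the exact $q$-power weights, rather than some other local invariant of the pair of $O_{E_i}$-actions. For general $f$ the argument is moreover conditional on the Guo--Jacquet Fundamental Lemma for the whole spherical Hecke algebra.
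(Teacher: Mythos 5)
Your proposal is not (and, with current knowledge, cannot be) a proof of Conj.~\ref{conj:AFL}: this statement is a conjecture, and the paper does not prove it in general. What you have outlined is precisely the paper's own reduction of the conjecture to the basic case, i.e.\ the content of Thm.~\ref{thm:analytic_reduction}, Thm.~\ref{thm:main} and Cor.~\ref{cor:main}, by the same route: on the analytic side the partial Satake transform \eqref{eq:Satake} and the Levi reduction, with $O(\alpha^0,\cdot,0)=0$ from Prop.~\ref{prop:sign} and the Leibniz rule; on the geometric side the connected--étale fibration identifying $M$ with $\underline{\Hom}(TY^1,Y^0)$ over $M_{\mbX^0}\times M_{\mbX^1}$ (Prop.~\ref{prop:fibration_RZ_space}) together with a fiber-degree count yielding the resultant factor (Prop.~\ref{prop:fiber_count_geometric}); then the Guo--Jacquet FL (Conj.~\ref{conj:FL}) to match the étale factors. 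Two caveats. First, where you invoke a ``Smith normal form computation'' as the crux, the paper's actual argument (Lem.~\ref{lem:phi_isogeny} through Lem.~\ref{lem:resultant_occurs}) reduces to the framing object and computes the degree of the quasi-isogeny $(q_1^-,q_2^-)$ via the identity $\det\big(A\otimes 1-1\otimes B\big)=\mr{Res}(P_A,P_B)$ applied to the elements $w(\beta^0)$, $w(\beta^1)$ of \cite{HL}; this, plus the flatness of the Hecke models (Prop.~\ref{prop:Hecke_op_flat}) and the intersection-theoretic bookkeeping of Appendix~\ref{s:appendix_loc_inter} needed to interpret $\mr{Int}(\beta,f)$ as a length, is the real work and is only gestured at in your sketch. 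Second, as you yourself concede, the conclusion is conditional on the basic AFL for $\beta^0$ and on the FL for the relevant Hecke functions, so it yields Conj.~\ref{conj:AFL} only in the cases recorded in Cor.~\ref{cor:main} (e.g.\ $\mr{ht}(\mbX^0)\le 2$, or $\le 4$ with the unit Hecke function); for general $\mbX^0$ and general $f$ the conjecture remains open, and your text should be read as a (correct, same-approach) sketch of the reduction theorem rather than a proof of the statement itself.
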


\begin{rmk}\label{rmk:AFL}
\begin{enumerate}[wide, labelindent=0pt, labelwidth=!, label=(\arabic*), topsep=2pt, itemsep=2pt]
\item In case $\mbX = \mbX^0$ connected, this is essentially the conjecture of Howard and the first author \cite{HL} (resp. the first author \cite{Li} if $E_1 = E_2$). The conjectures stated there are slightly different, however, because they do not take into account connected components. This is remedied with the definition in \eqref{eq:stacky_length}. The phenomenon does not yet occur for $n = 1$ or $n = 2$ because in these cases $L^\times$ acts transitively on $π_0(I(β, f))$. It is, unfortunately, quite challenging to verify an explicit example for $n = 3$ that shows that $\mr{Int}(β,f)$ is the correct definition. We offer a slightly different identity in Ex. \ref{ex:connected_components} below that provides some evidence however. We also remark that our definition of $\mr{Int}(β,f)$ is the one that comes naturally from a global intersection problem.

\item The case $n = 1$ of Conj. \ref{conj:AFL} is proved in \cite{HL, Li}. The case of $n = 2$ and $f = 1_{GL_4(O_F)}$ is shown in \cite{Li_GL4} (for $E_1 = E_2$) and the forthcoming work \cite{Li_future} (for $E_1\neq E_2$).

\item Assume $E_1 = E_2 = E$ is unramified over $F$. Given any pair $β$, one may write $β_2 = μβ_1μ^{-1}$ for some $μ\in D^\times$. Assume that $|m| \not\equiv v_D(γ)$ modulo $2$. Then $R(m) \cap (Z(β_2)\times Z(β_1)) = \emptyset$. One can also show (omitted) that the orbital integral then vanishes identically, $O(α,f,s) = 0$. In particular, Conj. \ref{conj:AFL} holds in that case. 
\end{enumerate}
\end{rmk}

It is enough to know Conj. \ref{conj:AFL} for a $\mbC$-basis of $\mcH$. Our main result to this end is the following analogue of the analytic reduction formula Thm. \ref{thm:analytic_reduction}.

\begin{thm}\label{thm:main}
For every $m \in \mbZ_{\geq 0}^r$, the following identity holds:
\begin{equation}
\begin{aligned}
\mr{Int}(β, m) = &\  \left|\mathrm{Disc}_{E_1/F}\cdot\mathrm{Disc}_{E_2/F} \right|_F^{-\frac{n^0\cdot n^1}2}
\left|\mr{Res}\left(\mathrm{Inv}(β^0), \mathrm{Inv}(β^1)\right)\right|_F^{-1}\\[3mm]
& \cdot \sum_{m = m^0 + m^1} q^{n^1\cdot |m^0| + n^0\cdot |m^1|}\, \mr{Int}(β^0, m^0) O(β^1, f(m^1)).
\end{aligned}
\end{equation}
Here, we have used the notation $|x|_F := |\mr{Nm}_{E_3/F}(x)|^{1/2}_F$ for elements $x\in E_3$.
\end{thm}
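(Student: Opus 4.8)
The plan is to compute $\mathrm{Int}(\beta,m)$ through the connected–étale fibration of the universal $\pi$-divisible $O_F$-module over $M$, following the strategy announced in the introduction. First I would set up the fibration $M\to M^0\times_{\Spf O_{\breve E}}M^1$ coming from the connected–étale sequence $0\to X^0\to X\to X^1\to 0$ of the universal object, and identify $M$ with the total space of the functor $\Hom(T(Y^1),Y^0)_{\mathrm{tors}}$ over $M^0\times_{\Spf O_{\breve E}}M^1$, where $Y^0,Y^1$ are the universal objects over $M^0,M^1$ and $T(Y^1)$ is the Tate module of the étale part. The Hecke correspondence $R(m)$ decomposes compatibly: an isogeny of height $m_i$ splits uniquely as a pair $(\varphi_i^0,\varphi_i^1)$ of heights $m_i^0+m_i^1=m_i$ on the connected and étale factors, giving $R(m)=\coprod_{m=m^0+m^1}R(m^0)\times_{M^0}\!\big(\text{total space}\big)\times_{M^1}R(m^1)$ in a suitable sense, mirroring \eqref{eq:connected_comp_intersection}. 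The cycles $Z(\beta_i)$ also respect this splitting since an $O_{E_i}$-module decomposes into connected and étale parts, so $Z(\beta_i)=Z(\beta_i^0)\times_{M^0}(\cdots)\times_{M^1}Z(\beta_i^1)$.

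The next step is to analyze the intersection $I(\beta,m)$ fiber-by-fiber over $M^0\times_{\Spf O_{\breve E}}M^1$. Over a point of $I(\beta^0,m^0)\times I(\beta^1,m^1)$, the remaining condition is that the extension class of $X$ — an element of $\Hom(T(Y^1),Y^0)$ — be simultaneously compatible with the two $O_{E_i}$-structures $\beta_i$. This is a linear-algebra condition: the two embeddings $\beta_1,\beta_2$ act on the Tate module of the étale part and on the (Dieudonné module of the) connected part, and compatibility of the extension class is cut out by the requirement that a certain $O_F$-linear map intertwine the $\beta^0$-action with the $\beta^1$-action. The length of this artinian scheme is governed by the "distance" between the $L^0$-structure and the $L^1$-structure, which is exactly where the resultant $\mr{Res}(\mr{Inv}(\beta^0),\mr{Inv}(\beta^1))$ and the discriminant factors $\mathrm{Disc}_{E_i/F}$ enter — the roots of $\mr{Inv}(\beta^j)$ are (squares of) the relevant eigenvalues, and the resultant measures when the connected and étale "eigenvalues" collide $\pi$-adically. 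I would carry this out by choosing a trivialization of $T(Y^1)$ and the Dieudonné crystal of $Y^0$, writing the $\beta_i$ in matrix form, and computing the cokernel of the intertwining map; the colength comes out as $-v_F\big(\mr{Res}(\mr{Inv}(\beta^0),\mr{Inv}(\beta^1))\big)-\tfrac{n^0n^1}{2}\,v_F(\mathrm{Disc}_{E_1}\mathrm{Disc}_{E_2})$, i.e. the archimedean absolute values in the statement. The factor $q^{n^1|m^0|+n^0|m^1|}$ arises from bookkeeping of heights: fixing $m^0$ on the connected part, the étale isogeny of height $m^1$ can be "rotated" by the level structure on $T(Y^1)$, contributing a factor $q^{n^0|m^1|}$ (matching the Satake transform \eqref{eq:Satake}), and symmetrically on the other side, after identifying $Z(\beta^1)\times Z(\beta^1)\to M^1\times M^1$ with the lattice-counting orbital integral $O(\beta^1,f(m^1))$.

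Finally I would assemble the pieces: the sum over $m=m^0+m^1$ comes from \eqref{eq:connected_comp_intersection}, the factor $\mathrm{Int}(\beta^0,m^0)$ from the length contributed by the connected fiber (which is literally Definition \ref{def:intersection} for $\beta^0$), the factor $O(\beta^1,f(m^1))$ from the étale fiber reinterpreted as an orbital integral via \eqref{eq:lattice_count_1}, the power of $q$ from the height bookkeeping, and the $|\mathrm{Disc}|$ and $|\mathrm{Res}|$ factors from the linear-algebra colength computation. Care is needed with the measure normalizations: $L^\times=L^{0,\times}\times L^{1,\times}$ acts diagonally and $O_L^\times=O_{L^0}^\times\times O_{L^1}^\times$, so the stacky normalizations in \eqref{eq:stacky_length} multiply correctly, and one must check that the $\Gamma$ chosen for $\beta$ can be taken of the form $\Gamma^0\times\Gamma^1$. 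I expect the \textbf{main obstacle} to be the precise colength computation identifying the resultant and discriminant factors — that is, showing that the "extension compatibility" module over each point of $I(\beta^0,m^0)\times I(\beta^1,m^1)$ has length exactly $-v_F(\mr{Res})-\tfrac{n^0n^1}{2}v_F(\mathrm{Disc}_{E_1}\mathrm{Disc}_{E_2})$ uniformly, independent of the point; this requires a careful analysis of the pairing between the $\beta^0$- and $\beta^1$-structures on the extension group, and is the exact analogue (and should parallel the proof) of the linear-algebra computation behind Thm. \ref{thm:analytic_reduction}.
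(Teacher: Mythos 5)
Your overall strategy coincides with the paper's: fibration via the connected–étale sequence, identification of $M$ with the Hom-space over $M^0\times M^1$ (Prop.~\ref{prop:fibration_RZ_space}), the decomposition \eqref{eq:connected_comp_intersection}, a fiberwise degree computation producing the discriminant and resultant factors, and the measure bookkeeping at the end. But the proposal stops exactly where the content of the theorem begins: the ``colength computation'' you defer as the main obstacle is the whole of the paper's \S4.3, and the method you sketch (trivialize, write the $\beta_i$ in matrix form, compute the cokernel of ``the intertwining map'') is missing the ideas that make that computation go through. First, the fiber of \eqref{eq:fibration_intersection} over a point is not cut out by a single extension class being simultaneously $O_{E_1}$- and $O_{E_2}$-linear: it is a chain $(\phi_0,\dots,\phi_r)$ of classes linked by the commutativity conditions \eqref{eq:diagram_condition_intersection}, with $O_{E_1}$-linearity imposed only at $\phi_0$ and $O_{E_2}$-linearity only at $\phi_r$. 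The paper encodes the fiber as the kernel of the map $\Phi$ of \eqref{eq:def_phi} and reduces the chain to its endpoints (Lem.~\ref{lem:phi_isogeny}); this reduction is where part of the $q$-power, $\prod_i\deg(L_i)=q^{2n^0|m^1|}$, actually comes from — not from a ``rotation by level structure'' heuristic. One also needs $\Phi$ to be an isogeny in order for $\Pi$ to be finite locally free, so that lengths multiply by a degree at all; in the paper this is only established at the very end, via the resultant computation itself.

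Second, the natural two-condition map $(q_1^-,q_2^-)\colon \Lambda_1\to\Lambda_1^-\times\Lambda_2^-$ is \emph{not} of the form $A\otimes 1-1\otimes B$, so its degree is not directly a resultant, and a priori it depends on the point through the lattices. The paper resolves this by (i) transporting everything to the framing object via the quasi-isogeny $\rho_0$, which is what gives uniformity in the point, and (ii) the degree identities of Lem.~\ref{lem:degree_identities}, which use an invertible element $z\in\End^0(\mbX^0)$ Galois-commuting with both $E_1$ and $E_2$ (from Howard--Li) together with the squaring trick \eqref{eq:key_trick}, converting $\deg(q_1^-,q_2^-)$ into the square root of $\deg(\Lambda_1\to\Lambda_2)\cdot\deg(q_1^+q_2^-|_{\Lambda_1^+})$. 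Only the composite $q_1^+q_2^-|_{\Lambda_1^+}$ is an intertwining operator, namely $f\mapsto f\circ w(\beta^1)-w(\beta^0)\circ f$, whose reduced norm is a resultant of characteristic polynomials (Lem.~\ref{lem:resultant_occurs}); moreover the discriminant factor does not arise from ``$\pi$-adic collisions of eigenvalues'' but from the affine change of variables $w(\beta^j)=d\,\mathbf{s}_{\beta^j}+c$ with $d=(\zeta_1-\zeta_1^{\sigma_1})(\zeta_2-\zeta_2^{\sigma_2})$ relating $\mathrm{charred}(w(\beta^j))$ to $\mr{Inv}(\beta^j)$, while the remaining half of the $q$-power, $q^{n^1|m^0|-n^0|m^1|}$, is the lattice index $\deg(\Lambda_1\to\Lambda_2)^{1/2}$. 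Without these steps (chain reduction, isogeny property of $\Phi$, reduction to the framing object, the $z$-symmetry and squaring trick, and the $w$-to-invariant comparison) your plan does not produce the stated constants, so the proposal as written has a genuine gap at its central step, even though its architecture matches the paper's.
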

Comparing this with \eqref{eq:analytic_reduction}, we obtain the following corollary.

\begin{cor}\label{cor:main}
Assume that the FL (Conj. \ref{conj:FL}) holds. Then the AFL (Conj. \ref{conj:AFL}) holds in all cases if it holds in all basic cases, meaning in all cases where $\mbX = \mbX^0$ is connected.

More precisely, the AFL then holds for some $\mbX$, some $f(m)$ and some $β$ if it holds for $β^0$ and every $f(m^0)$ with $0\leq m^0\leq m$.

In particular, the AFL holds whenever $\mr{ht}(\mbX^0) = 2$. It holds for the unit function $f = 1_{GL_{2n}(O_F)}$ if $\mr{ht}(\mbX^0) = 4$.
\end{cor}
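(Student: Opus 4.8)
The strategy is to derive Cor.~\ref{cor:main} formally from the two reduction formulas Thm.~\ref{thm:main} and Thm.~\ref{thm:analytic_reduction}, feeding in the Fundamental Lemma (Conj.~\ref{conj:FL}, assumed) and the basic AFL (assumed) as black boxes. First I would reduce to the generators $f(m)$: both $\mr{Int}(β,\cdot)$ and $\del(α,\cdot)$ are $\mbC$-linear, $\mr{Int}(β,[π]^k*f)=\mr{Int}(β,f)$ directly from Def.~\ref{def:int_number_for_AFL}, and since $O(α,f,0)=0$ (Cor.~\ref{cor:vanishing_order} together with $\mr{ord}(\Inv(α))=1$ from Lem.~\ref{lem:characterization_of_cases}), differentiating at $s=0$ gives $\del(α,[π]^k*f)=\pm\del(α,f)$. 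So I fix $m\in\mbZ_{\geq0}^r$ and let $α=(α^0,α^1)$ be the pair matching $β=(β^0,β^1)$; since $β$ factors through $\End^0(\mbX^0)\times\End^0(\mbX^1)=D_{1/2n^0}\times M_{2n^1}(F)$, we may take $α$ to factor through $M_{2n^0}(F)\times M_{2n^1}(F)$, and then $\Inv(α^j)=\Inv(β^j)$ for $j=0,1$ by multiplicativity of the invariant over the factorization.

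Next I would expand both sides. By Thm.~\ref{thm:main},
$$\mr{Int}(β,m)=c\sum_{m^0+m^1=m}q^{n^1|m^0|+n^0|m^1|}\,\mr{Int}(β^0,m^0)\,O(β^1,f(m^1)),$$
where $c=|\mathrm{Disc}_{E_1/F}\mathrm{Disc}_{E_2/F}|_F^{-n^0n^1/2}\,|\mr{Res}(\Inv(β^0),\Inv(β^1))|_F^{-1}$. On the analytic side, Thm.~\ref{thm:analytic_reduction}, the identity $\mcS(f(m))=\sum_{m^0+m^1=m}q^{n^1|m^0|+n^0|m^1|}f(m^0)\otimes f(m^1)$ (the $k=0$ case of \eqref{eq:Satake}), and the factorization $O^{\mr{Levi}}((α^0,α^1),f(m^0)\otimes f(m^1),s)=O(α^0,f(m^0),s)\,O(α^1,f(m^1),s)$ give
$$O(α,f(m),s)=c'\sum_{m^0+m^1=m}q^{n^1|m^0|+n^0|m^1|}O(α^0,f(m^0),s)\,O(α^1,f(m^1),s),$$
with $c'=|\mathrm{Disc}_{E_3/F}|_F^{-n^0n^1/2}\,|\mr{Res}(\Inv(α^0),\Inv(α^1))|_F^{-1}$. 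Two elementary checks show $c'=c$: the resultant factors agree since $\Inv(α^j)=\Inv(β^j)$; and $|\mathrm{Disc}_{E_3/F}|_F=|\mathrm{Disc}_{E_1/F}\mathrm{Disc}_{E_2/F}|_F$ by the conductor--discriminant relation $\mathrm{Disc}_{E_1E_2/F}=\mathrm{Disc}_{E_1/F}\mathrm{Disc}_{E_2/F}\mathrm{Disc}_{E_3/F}$ and the hypothesis that one of $E_1,E_2$ is unramified (whence $E_1E_2$ is unramified over the other and $\mathrm{Disc}_{E_1E_2/F}=\mathrm{Disc}_{E_i/F}^2$).

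Now I would differentiate the analytic identity at $s=0$ and divide by $2\log q$. Since $β^0$ maps into the division algebra $D_{1/2n^0}$, Prop.~\ref{prop:sign} gives $O(α^0,f(m^0),0)=0$ for every $m^0$, so the Leibniz rule collapses each summand and
$$\del(α,f(m))=c\sum_{m^0+m^1=m}q^{n^1|m^0|+n^0|m^1|}\,\del(α^0,f(m^0))\,O(α^1,f(m^1),0).$$
The basic AFL for $β^0$ gives $\del(α^0,\cdot)=\pm\mr{Int}(β^0,\cdot)$ on $\mcH^0$ with a single sign, and the FL for the matching pair $α^1\leftrightarrow β^1$ gives $O(α^1,\cdot,0)=\pm O(β^1,\cdot)$ on $\mcH^1$ with a single sign; substituting and comparing with the formula for $\mr{Int}(β,m)$, every summand is scaled by the same $\varepsilon\in\{\pm1\}$, so $\varepsilon\,\del(α,f(m))=\mr{Int}(β,m)$, which is Conj.~\ref{conj:AFL} for $f(m)$ and hence for all $f$. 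The last two assertions then follow by plugging in known basic cases: if $\mr{ht}(\mbX^0)=2$ then $n^0=1$ and the basic AFL for $β^0$ is known for the whole Hecke algebra of $GL_2(F)$ by \cite{HL,Li}, so the AFL holds for all $f$; if $\mr{ht}(\mbX^0)=4$ then $n^0=2$ and only the unit function is available (\cite{Li_GL4,Li_future}), but $\mcS(1_{GL_{2n}(O_F)})=1_{GL_4(O_F)}\otimes 1_{GL_{2n^1}(O_F)}$ uses $β^0$ only through $1_{GL_4(O_F)}$, so the AFL holds for $f=1_{GL_{2n}(O_F)}$.

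The one step requiring real care is the sign and normalization bookkeeping: there is no geometric content here beyond Thm.~\ref{thm:main}, but to make ``up to sign'' uniform one must fix the auxiliary data ($g_0,g_3$, the lattice $\Gamma\subseteq L^\times$, and the branch defining $\del$) consistently across the analytic reduction, the Satake formula \eqref{eq:Satake}, the FL and the basic AFL, and check that the various signs assemble into a single global one. A convenient way to see this is to note that $\mr{Int}(β,m)$, $\mr{Int}(β^0,m^0)$ and $O(β^1,f(m^1))$ are all $\geq0$ (lengths, resp.\ a lattice count against the non-negative function $f(m^1)$) and the structure constants $c\,q^{n^1|m^0|+n^0|m^1|}$ are positive, which forces the AFL and FL signs to be compatible; this is the kind of verification already carried out in \cite{HL}.
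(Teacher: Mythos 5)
Your proposal is correct and is essentially the paper's own (largely implicit) argument: the paper deduces Cor.~\ref{cor:main} by directly comparing Thm.~\ref{thm:main} with Thm.~\ref{thm:analytic_reduction} via the Satake formula \eqref{eq:Satake}, using Prop.~\ref{prop:sign} for the vanishing $O(α^0,f(m^0),0)=0$ and then feeding in the FL for $(α^1,β^1)$ and the basic AFL for $β^0$, exactly as you do, including the needed observation that $|\mathrm{Disc}_{E_3/F}|_F=|\mathrm{Disc}_{E_1/F}\mathrm{Disc}_{E_2/F}|_F$ when one of $E_1,E_2$ is unramified. The only small remark is that the uniformity of the signs is most cleanly seen from linearity of the distributions $f\mapsto O(α^j,f,s)$ together with the fact that the ambiguity comes from the fixed normalization data ($g_0,g_3$) rather than from your positivity heuristic, but this does not affect the validity of the argument.
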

\begin{proof}
We show the more precise, second statement. Consider $\mbX = \mbX^0\times \mbX^1$ of height $2n = 2n^0+2n^1$, a pair $β = (β^0, β^1):(E_1, E_2)\to D_{2n^0}\times M_{2n^1}(F)$, and some $m \in \mbZ^r_{\geq 0}$. Let $α^0:(E_0, E_3)\to M_{2n^0}(F)$ match $β^0$ and let $α^1:(E_0, E_3)\to M_{2n^1}(F)$ match $β^1$. In particular, their direct sum
$$α = (α^0, α^1):(E_0, E_3) \to M_{2n}(F)$$
matches $β$. Set $δ^j := \Inv(α^j) = \Inv(β^j)$ in the following. Also define
$$\Theta = \left|\mathrm{Disc}_{E_1/F}\cdot\mathrm{Disc}_{E_2/F} \right|_F^{-\frac{n^0\cdot n^1}2} \left|\mr{Res}\left(δ^0, δ^1\right)\right|_F^{-1}$$
and note that this expression equals
$$\left|\mathrm{Disc}_{E_3/F}\right|_F^{-\frac{n^0\cdot n^1}2} \left|\mr{Res}\left(δ^0, δ^1\right)\right|_F^{-1}.$$
Namely, taking $E_1$ to be unramified, $\mr{Disc}_{E_1/F}$ is trivial and
$$\mr{Disc}_{E_2/F} = \mr{Disc}_{E_1\tensor_F E_2/E_1} = \mr{Disc}_{E_1\tensor_F E_3/E_1} = \mr{Disc}_{E_3/F}$$
where the outer identities follow from the unramifiedness of $E_1/F$ and where the middle one follows from $E_1\tensor_FE_3\iso E_1\tensor_FE_2$. Now we apply Thm. \ref{thm:analytic_reduction}. Substituting the concrete expression for the partial Satake transformation $\mcS(f(m))$ from \eqref{eq:Satake}, taking the derivative at $s = 0$, and applying the vanishing of $O(α^0,f(m^0))$ for every $m^0\in \mbZ^r_{\geq 0}$ (see Prop. \ref{prop:sign}), this theorem states that
\begin{equation}\label{eq:1}
\del(α, f(m)) = \Theta \cdot \sum_{m = m^0 + m^1} q^{n^1\cdot |m^0| + n^0\cdot |m^1|}\, \del(α^0, f(m^0)) O(α^1, f(m^1)).
\end{equation}
On the other hand, Thm. \ref{thm:main} states that
\begin{equation}\label{eq:2}
\mr{Int}(β, m) = \Theta \cdot \sum_{m = m^0 + m^1} q^{n^1\cdot |m^0| + n^0\cdot |m^1|}\, \mr{Int}(β^0, m^0) O(β^1, f(m^1)).
\end{equation}
The fundamental lemma (Conj. \ref{conj:FL}), which we assumed to hold, states that $O(α^1, f(m^1))$ equals $O(β^1, f(m^1))$ for all $m^1$. Furthermore, our assumption also is that $\del(α^0, f(m^0)) = \mr{Int}(β^0, m^0)$ for all occurring $m^0$. Comparing \eqref{eq:1} and \eqref{eq:2} then shows that $\del(α, f(m)) = \mr{Int}(β, m)$ as was to be shown. The last statement of the corollary follows from the known cases of the AFL in Rmk. \ref{rmk:AFL}.
\end{proof}

The proof of Thm. \ref{thm:main} will be given in the next section. Here, we conclude with the promised example on connected component counts.

\begin{ex}\label{ex:connected_components}
Assume that $\mbX = \mbX^0$ is connected and that $E = E_1 = E_2$ is unramified over $F$. Let $L/F$ be an unramified field extension of degree $n$ and choose an embedding $L\to D = \End^0(\mbX)$. Write $B = \mr{Cent}_D(L)$ for its centralizer and let $β:(E, E)\to B$ be a pair of embeddings of $F$-algebras whose image generates $B$ over $F$. This situation can only exist if $n$ is odd because $E\tensor_FL$ then embeds into $B$ by Prop. \ref{prop:universal_pair} (2) and hence has to be a field. Denote by $Z = Z(O_L, β)\subseteq M$ the closed formal subscheme of $(X,ρ)$ such that
$$ρ^{-1} O_Lρ,\ \ ρ^{-1}β_1(O_E)ρ,\ \ ρ^{-1}β_2(O_E)ρ\ \ \mr{all}\ \subseteq \End(X).$$
Let $Γ = π^\mbZ$. (Note that $π$ is also a uniformizer for $L$.) Then $Γ\backslash Z(O_L, β)$ has $2n$ connected components that correspond to the $2n$ possible characters of $O_{β_1(E)L}$, acting on the Lie algebra. Imposing compatible Kottwitz conditions for the two $O_E$-actions leaves us with only $n$ connected components. Each is isomorphic to the intersection $Γ\backslash I(β \tensor \mr{id}_L, 1_{GL_2(O_L)})$ that occurs in the formulation of the linear AFL for $GL_{2,L}$. So
$$\ell_{O_{\breve E}}(\mcO_{\Gamma\back Z}) = n \Int(β \tensor \mr{id}_L, 1_{GL_2(O_L)}).$$
Let $α:(E_0,E_3)\to M_{2n}(F)$ match $β$. The centralizer of $α$ is isomorphic to $L$. In \eqref{eq:lattice_count_2}, we may sum only over the sets
$$\mcL(O_L[α_0])\subseteq \mcL(α_0),\ \ \ \mcL(O_L[α_3])\subseteq \mcL(α_3)$$ of lattices $Λ$ that are also $O_L$-stable to obtain a rational function in $q^s$,
\begin{equation}\label{eq:lattice_count_L_variant}
O(O_L[α],1_{GL_{2n}(O_F)},s) = \sum_{(Λ_0, Λ_3) \in L^\times \backslash (\mathcal{L}(O_L[α_0])\times \mathcal{L}(O_L[α_3]))} \mr{Vol}(\mr{Stab}_{L^\times}(Λ_0,Λ_3))^{-1} δ(Λ_0,Λ_3) \Omega(Λ_0, Λ_3, s),
\end{equation}
with $δ(Λ_0,Λ_3) = 1$ if $Λ_0 = Λ_3$ and $0$ otherwise. Then
\begin{equation}\label{eq:orb_int_identities_L}
\begin{aligned}
O(O_L[α], 1_{GL_{2n}(O_F)}, s) & = O(α \tensor \mr{id}_L, 1_{GL_2(O_L)}, s),\\[3mm]
\del(O_L[α], 1_{GL_{2n}(O_F)}) & = n \del(α \tensor \mr{id}_L, 1_{GL_2(O_L)}),
\end{aligned}
\end{equation}
where the factor $n$ stems from the difference in normalization by $q_F$ or $q_L = q_F^n$. In this way, we have recovered the AFL conjecture over $L$ for $n = 2$ and the unit Hecke function, but only with the right convention for connected component counting.
\end{ex}

\section{Reduction to elliptic invariant}
\label{s:reduction}
The purpose of this section is to prove the reduction formulas Thm. \ref{thm:analytic_reduction} and Thm. \ref{thm:main}. The next three sections reduce this to a degree computation that will be accomplished in \S\ref{ss:zhongxin}.

\subsection{Fibration of lattice sets}
Let $V = V^0\oplus V^1$ be a direct sum decomposition of $V = F^{2n}$ with $V^0 \iso F^{2n^0}$ and $V^1 \iso F^{2n^1}$. Denote by $p^j:V\to V^j$ the two projection maps. The fundamental observation is that there is a bijection of $O_F$-lattices $X\subseteq V$ and triples
$$\left\{\left(X^0, X^1, s\right)\ \left\vert\ \text{\begin{varwidth}{\textwidth}$X^0\subset V^0,\ X^1\subset V^1$ both $O_F$-lattices,\\
$s:X^1 \to V^0/X^0$ any $O_F$-linear map\end{varwidth}}\right\}\right..$$
It is given by
\begin{equation}
X \longmapsto \left(X^0 := X\cap V^0,\ X^1 := p^1(X),\ s_X := [X^1\overset{t}{\longrightarrow}X\overset{p^0}{\longrightarrow} V^0/X^0]\right)
\end{equation}
where $t:X^1\to X$ is any choice of splitting for $p^1$. The inverse is given by
$$\left(X^0, X^1, s\right) \longmapsto X^0 + \left\{(\wt s(x_1),x_1),\ x_1\in X^1\right\}$$
where $\wt s:X^1 \to V^0$ is any lift of $s$. The bijection moreover satisfies the following two functoriality properties.
\begin{enumerate}[wide, labelindent=0pt, labelwidth=!, label=(\arabic*), topsep=2pt, itemsep=2pt]
\item Assume $X\subseteq Y$. Then clearly $X^0 \subseteq Y^0$ and $X^1 \subseteq Y^1$. We claim that also the following diagram commutes
\begin{equation}\label{eq:comm_square_lattice_fibration}
\xymatrix{X^1 \ar[d]_{s_X} \ar[r] & Y^1 \ar[d]^{s_Y}\\
V^0/X^0 \ar[r] & V^0/Y^0.}
\end{equation}
Namely $\left(\wt s_X(x^1), x^1\right) \in X$ lying in $Y$ means that there is $y^0\in Y^0$ such that $\wt s_X(x^1) = \wt s_Y(x^1) + y_0.$ Conversely, if for two lattices $X$ and $Y$ one has $X^j \subseteq Y^j$ and also that \eqref{eq:comm_square_lattice_fibration} commutes, then $X\subseteq Y$.

\item Let $ζ = (ζ^0, ζ^1) \in \End(V^0)\times \End(V^1)\subset \End(V)$ be an endomorphism. Then $X$ is $ζ$-stable if and only if each $X^j$ is $ζ^j$-stable and $ζ^0 \circ s = s \circ ζ^1$.
\end{enumerate}

\subsection{Application to orbital integrals}
Consider a regular semi-simple pair $α:(E_0,E_3)\to M_{2n}(F)$. Let $B$ be the quaternion algebra generated by the image of $α$, denote by $L = \mr{Cent}(B)$ its center. Assume that $L = L^0\times L^1$ is a product with eigenspace decomposition $V = V^0\oplus V^1$. Given $m = (m_1,\ldots,m_r)\in \mbZ_{\geq 0}^r$, define
\begin{equation}\label{eq:def_lattice_set_E03}
\mathcal{L}(α,m) = \left\{(X_0, X_1, \ldots,X_r)\ \left\vert\ \text{\begin{varwidth}{\textwidth}$X_0\subset V$ an $O_{E_3}$-lattice, $X_r\subset V$ an $O_{E_0}$-lattice,\\
$X_i\subset V$ an $O_F$-lattice for $i =1,\ldots,r-1$,\\
$X_{i-1}\subseteq X_i$ of index $m_i$\end{varwidth}}\right\}\right..
\end{equation}
Note that this definition matches \eqref{eq:f_m}. The above discussion provides a map
\begin{equation}\label{eq:fibration_lattices_E03}
\begin{aligned}
\mathcal{L}(α,m) \ & \longrightarrow\ \coprod_{m^0,m^1\in \mbZ_{\geq 0}^r,\ m = m^0+m^1} \mathcal{L}(α^0,m^0) \times \mathcal{L}(α^1,m^1)\\[3mm]
(X_0,\ldots,X_r) & \longmapsto \ \ \ \ \ \ \ \ \ \left((X_0^0,\ldots,X_r^0), (X_0^1,\ldots,X_r^1)\right).
\end{aligned}
\end{equation}
We use the shorter notation $X_\bullet$, $X_\bullet^0$ and $X_\bullet^1$ in the following. Let $Γ^j\subset L^{j, \times}$ be a subgroup with $Γ^j \times \mcO_{L^j}^\times \iso L^{j,\times}$ and put $Γ = Γ^0\times Γ^1 \subseteq L^\times$. Then \eqref{eq:fibration_lattices_E03} is $Γ$-equivariant. Using \eqref{eq:lattice_count_2} and the description of $f(m)$ in \eqref{eq:f_m}, we obtain
\begin{equation}
O\left(α,[π]^k*f(m), s\right) = \sum_{X_\bullet \in Γ\backslash \mathcal{L}(α,m)} \Omega(X_r, X_0, s).
\end{equation}
The same results give
\begin{multline}
\quad O\left(α^0, [π]^k*f(m^0), s\right) O\left(α^1, [π]^k*f(m^1), s\right)\\[3mm]
= \sum_{(X_\bullet^0, X_\bullet^1)\in Γ^0\backslash \mathcal{L}(α^0,m^0)\times Γ^1\backslash \mathcal{L}(α^1, m^1)} \Omega(X_r^0, X_0^0, s) \Omega(X_r^1, X_0^1, s)\qquad
\end{multline}
Observe now that the transfer factor from \eqref{eq:root_number} is multiplicative: If $X_\bullet \mapsto (X_\bullet^0, X_\bullet^1)$ in \eqref{eq:fibration_lattices_E03}, then
\begin{equation}\label{eq:product_Omega}
\Omega(X_r, X_0, s) = \Omega(X_r^0, X_0^0, s)\,\Omega(X_r^1, X_0^1, s).
\end{equation}
By the explicit description of the partial Satake transform \eqref{eq:Satake}, the reduction formula Thm. \ref{thm:analytic_reduction} (for $E_0$ and $E_3$) is now reduced to the following proposition.

\begin{prop}\label{prop:fiber_count_analytic_E03}
Each fiber of \eqref{eq:fibration_lattices_E03} over $\mathcal{L}(α^0,m^0)\times \mathcal{L}(α^1,m^1)$ has
\begin{equation}\label{eq:reduction_factor_E03}
\left|\mathrm{Disc}_{E_3/F}\right|_F^{-\frac{n^0\cdot n^1}2}\left|\Res\left(\mathrm{Inv}(α^0), \mathrm{Inv}(α^1)\right)\right|_F^{-1}\cdot q^{n^1|m^0|+n^0\cdot |m^1|}
\end{equation}
many elements. In other words, given any pair $(X_\bullet^0,X_\bullet^1) \in \mathcal{L}(α^0,m^0)\times \mathcal{L}(α^1,m^1)$, there are \eqref{eq:reduction_factor_E03} many tuples $(γ_0,\ldots,γ_r)$ of $O_F$-linear maps $γ_i:X^1_i \to V^0/X^0_i$ that fit into a commutative diagram
\begin{equation}\label{eq:diagram_conditions_E03}
\xymatrix{
X_0^1 \ar[r]\ar[d]_{γ_0}& X_1^1 \ar[r]\ar[d]_{γ_1}& \cdots \ar[r]& X_r^1\ar[d]_{γ_r}\\
V^0/X_0^0 \ar[r]& V^0/X_1^0 \ar[r]& \cdots \ar[r]& V^0/X_r^0\\
}
\end{equation}
and such that $γ_0$ is $O_{E_3}$-linear, and such that $γ_r$ is $O_{E_0}$-linear.
\end{prop}

Given a regular semi-simple pair $β:(E_1, E_2)\to M_{2n}(F)$ there is a completely analogous definition of a lattice set $\mathcal{L}(β, m)$. If $β$ factors through $M_{2n^0}(F)\times M_{2n^1}(F)$, all the above considerations apply to provide a fibration
\begin{equation}\label{eq:fibration_lattices_E12}
\mathcal{L}(β, m)\longrightarrow \coprod_{m = m^0 + m^1} \mathcal{L}(β^0, m^0)\times \mathcal{L}(β^1, m^1).
\end{equation}
In this way, Thm. \ref{thm:analytic_reduction} (for $E_1$ and $E_2$) reduces to the following statement.
\begin{prop}\label{prop:fiber_count_analytic_E12}
Each fiber of \eqref{eq:fibration_lattices_E12} over $\mathcal{L}(β^0,m^0)\times \mathcal{L}(β^1,m^1)$ has
\begin{equation}\label{eq:reduction_factor_E12}
\left|\mathrm{Disc}_{E_1/F}\cdot \mathrm{Disc}_{E_2/F}\right|_F^{-\frac{n^0\cdot n^1}2}\left|\Res\left(\mathrm{Inv}(β^0), \mathrm{Inv}(β^1)\right)\right|_F^{-1}\cdot q^{n^1|m^0|+n^0\cdot |m^1|}
\end{equation}
many elements. In other words, given any pair $(X_\bullet^0,X_\bullet^1) \in \mathcal{L}(β^0,m^0)\times \mathcal{L}(β^1,m^1)$, there are \eqref{eq:reduction_factor_E12} many tuples $(γ_0,\ldots,γ_r)$ of $O_F$-linear maps $γ_i:X^1_i \to V^0/X^0_i$ that fit into a commutative diagram
\begin{equation}\label{eq:diagram_conditions_E12}
\xymatrix{
X_0^1 \ar[r]\ar[d]_{γ_0}& X_1^1 \ar[r]\ar[d]_{γ_1}& \cdots \ar[r]& X_r^1\ar[d]_{γ_r}\\
V^0/X_0^0 \ar[r]& V^0/X_1^0 \ar[r]& \cdots \ar[r]& V^0/X_r^0\\
}
\end{equation}
and such that $γ_0$ is $O_{E_2}$-linear, and such that $γ_r$ is $O_{E_1}$-linear.
\end{prop}

\subsection{Fibration of RZ spaces}
\label{ss:fibration_RZ}
Our aim is to adapt the previous arguments to strict $π$-divisible $O_F$-modules. We begin with some general observations about the connected-étale sequence. Let $S$ be a scheme with $π\in \mcO_S$ locally nilpotent. Assume that $X/S$ has fiberwise constant étale rank. Then, by \cite[Prop. II.4.9]{Messing} resp. \cite[Prop. 10.16]{HS}, it is an extension
$$0 \longrightarrow X^0 \longrightarrow X \longrightarrow X^1 \longrightarrow 0$$
of its maximal étale quotient by its identity component. We view $π$-divisible $O_F$-modules as sheaves for the fpqc-topology in the following. Then the maximal étale quotient $X^1$ has the canonical presentation
\begin{equation}\label{eq:tate_module_seq}
0 \longrightarrow T \longrightarrow V \longrightarrow X^1 \longrightarrow 0
\end{equation}
where
$$T := TX^1 := \underset{\longleftarrow}{\lim}{\,}_{n\geq 0}\,\, X^1[π^n]\quad \text{and}\quad V := VX^1 := TX^1[π^{-1}]$$
denote the Tate module resp. rational Tate module of $X^1$. The natural map $V\to X^1$ in \eqref{eq:tate_module_seq} is defined by
$$π^{-n}(\ldots,x_2,x_1,x_0) \longmapsto x_n,\ \ \text{where}\ (\ldots,x_2,x_1,x_0)\in TX^1.$$
Applying $\Hom(-, X^0)$ and taking into account that multiplication by $π$ is an automorphism of $V$ while $\Hom(T,X^0)$ is torsion, we obtain an exact sequence
\begin{equation}\label{eq:hom_ext_sequence}
0 \longrightarrow \Hom(T, X^0) \longrightarrow \Ext^1(X^1,X^0) \longrightarrow \Ext^1(V, X^0).
\end{equation}
The leftmost term here is torsion, the rightmost term torsion-free, so
$$\Hom(T, X^0) = \Ext^1(X^1, X^0)_{\mr{torsion}}.$$
By definition, an extension
$$0 \longrightarrow X^0 \longrightarrow X \longrightarrow X^1 \longrightarrow 0$$
here is torsion if and only if there exists an integer $N\geq 1$ and a commutative diagram
\begin{equation}\label{eq:extension_torsion}
\xymatrix{
0 \ar[r] & X^0 \ar[r] \ar@{=}[d] & X^0 \oplus X^1 \ar[r] \ar[d] & X^1 \ar[r] \ar[d]^-{π^N} & 0\\
0 \ar[r]		& X^0 \ar[r]			& X \ar[r]				& X^1 \ar[r] & 0.}
\end{equation}
\begin{prop}\label{prop:extension_p_div}
\begin{enumerate}[wide, labelindent=0pt, labelwidth=!, label=(\arabic*), topsep=2pt, itemsep=2pt]
\item An extension $0\to X^0\to X \to X^1\to 0$ is torsion if and only if there exists an isogeny $φ:X\to X^0\oplus X^1$.

\item Assume $φ^0:X^0 \to Y^0$ and $φ^1:X^1\to Y^1$ are two homomorphisms with $Y^1$ étale. Assume furthermore that we are given two torsion extensions
$$0\lr X^0\lr X \lr X^1\lr 0\ \ \ \mr{and}\ \ \ 0\lr Y^0\lr Y \lr Y^1\lr 0.$$
Then there exists a (necessarily unique) map $φ:X \to Y$ that restricts to $φ_0$ and extends $φ_1$ if and only if the following diagram commutes
$$\xymatrix{
TX^1 \ar[r]^{Tφ^1} \ar[d]_X & TY^1 \ar[d]^Y \\
X^0 \ar[r]^{φ^0} & Y^0.
}$$
\end{enumerate}
\end{prop}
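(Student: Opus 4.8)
The plan is to relate extensions of $\pi$-divisible $O_F$-modules to homomorphisms out of the Tate module, and then to reduce the assertion to the torsion-free case, where it is classical deformation theory. Throughout, I work with the identification $\Hom(TX^1, X^0) = \Ext^1(X^1, X^0)_{\mr{torsion}}$ established just above the statement, viewing all objects as fpqc-sheaves.

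\textbf{Part (1).} One direction is immediate: if $\varphi:X \to X^0\oplus X^1$ is an isogeny, say killed by $\pi^N$, then there is $\psi:X^0\oplus X^1 \to X$ with $\psi\varphi = \pi^N$ and $\varphi\psi = \pi^N$; pulling back the sequence $0\to X^0\to X\to X^1\to 0$ along $\pi^N:X^1\to X^1$ and using that this pullback is $X^0\oplus X^1$ (because the pullback along $\varphi$ followed by the canonical splitting of $X^0\oplus X^1$ trivializes it) yields a diagram of the shape \eqref{eq:extension_torsion}, so the extension is torsion. For the converse, suppose the extension is torsion, witnessed by \eqref{eq:extension_torsion} with some $N$. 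The middle vertical map $X^0\oplus X^1\to X$ together with the identity on $X^0$ and $\pi^N$ on $X^1$ forces — by the snake lemma / five lemma on the two short exact rows — that this middle map is an isogeny whose kernel and cokernel are controlled by those of $\pi^N$ on $X^1$, hence it is an isogeny $X^0\oplus X^1\to X$; composing with multiplication by $\pi^N$ on $X$ (itself an isogeny since $X$ is $\pi$-divisible) and then with a quasi-inverse gives, after clearing denominators, an honest isogeny $\varphi:X\to X^0\oplus X^1$. One has to be slightly careful that "isogeny" in the function field setting is the appropriate notion, but the formal properties used (existence of quasi-inverses up to $\pi^N$, $\pi$-divisibility) hold verbatim by \cite{HS}.

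\textbf{Part (2).} Uniqueness of $\varphi$ is clear: two such maps differ by a map $X\to Y$ that is zero on $X^0$ and zero after composing with $Y\to Y^1$, hence factors through $\Hom(X^1, Y^0)$; but $X^1$ is étale and $Y^0$ is connected, so this $\Hom$ vanishes (any such map kills $X^1[\pi^n]$ for all $n$ by connectedness of $Y^0$, hence is zero as $X^1$ is $\pi$-divisible). For existence, the cleanest route is via the $\Ext$-description. A torsion extension $0\to X^0\to X\to X^1\to 0$ corresponds to a class $e_X\in\Ext^1(X^1,X^0)_{\mr{tors}} = \Hom(TX^1,X^0)$, namely the composite $TX^1 \overset{\partial_X}{\to} X^0$ coming from the connecting map of $0\to TX^1\to VX^1\to X^1\to 0$ after applying $\Hom(-,X^0)$; similarly $e_Y$ corresponds to $\partial_Y:TY^1\to Y^0$. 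A map $\varphi:X\to Y$ restricting to $\varphi^0$ and inducing $\varphi^1$ exists if and only if the two pullbacks of $e_Y$ agree: $(\varphi^1)^*e_Y = (\varphi^0)_* e_X$ in $\Ext^1(X^1,Y^0)$. Unwinding the identification with $\Hom(T-,-)$, $(\varphi^1)^*$ becomes precompose with $T\varphi^1$ and $(\varphi^0)_*$ becomes postcompose with $\varphi^0$, so the condition reads $\varphi^0\circ\partial_X = \partial_Y\circ T\varphi^1$, which is exactly commutativity of the displayed square. The one point requiring care is the standard homological fact that for an extension class $e$, a morphism of the "outer" data $(\varphi^0,\varphi^1)$ lifts to a morphism of extensions precisely when $(\varphi^1)^*e_Y = (\varphi^0)_*e_X$; this is the usual obstruction-theory statement for $\Ext^1$ in an abelian (here, suitable exact) category of fpqc sheaves, and it applies here because $\Ext^1(X^1, Y^0)$ is computed in that category.

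\textbf{Main obstacle.} The genuinely delicate step is making the $\Ext$ formalism rigorous in this setting: one must work in a category of fpqc sheaves of $O_F$-modules where $\Ext^1$ is defined and behaves functorially, verify that torsion classes are exactly those landing in the image of $\Hom(TX^1,X^0)$ (done above the statement), and confirm that the pullback/pushforward of extension classes translates into pre-/post-composition with $T\varphi^1$ and $\varphi^0$ under this identification. Once this bookkeeping is in place, both parts are formal. I would present Part (2) via this $\Ext$-theoretic argument rather than by hand-constructing $\varphi$ from \eqref{eq:extension_torsion}, since the latter forces one to choose compatible trivializations and check cocycle conditions, which is exactly the content being packaged by $\Ext^1$.
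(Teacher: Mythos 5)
Your proposal is correct and follows essentially the same route as the paper: part (1) by the same extension-diagram/isogeny manipulations, and part (2) via the identification $\Hom(TX^1,-)\cong\Ext^1(X^1,-)_{\mr{tors}}$ together with functoriality of extension classes, which the paper simply makes explicit through the presentation $X=(X^0\oplus VX^1)/TX^1$. One caveat: justify the uniqueness statement $\Hom(X^1,Y^0)=0$ by the divisibility argument underlying \eqref{eq:hom_ext_sequence} (e.g.\ $\Hom(X^1,Y^0)\hookrightarrow\Hom(VX^1,Y^0)=0$), not by ``connectedness of $Y^0$ kills each $X^1[π^n]$'', since homomorphisms from étale to connected finite group schemes need not vanish over non-reduced bases.
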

\begin{proof}
(1) The two $π$-divisible $O_F$-modules in the middle column of \eqref{eq:extension_torsion} have the same height and the kernel of the middle vertical arrow of \eqref{eq:extension_torsion} identifies with $X^1[π^N]$, is hence finite locally free. This shows the ``only if'' assertion.

For the converse, we observe that any homomorphism $φ:X\to X^0\oplus X^1$ provides two homomorphisms $φ^0 = φ\vert_{X^0}:X^0\to X^0$ and $φ^1 = (φ\mod X^0): X^1\to X^1$. If $\ker(φ)$ is finite locally free, then also the intersection $\ker(φ^0) = \ker(φ) \cap X^0$ has this property because it is open and closed in $\ker(φ)$. Thus $φ^0$ is an isogeny. Then $\ker(φ^1) = \ker(φ)/\ker(φ^0)$ is also finite locally free, so $φ^1$ is an isogeny. Let $ψ^0$ and $ψ^1$ be such that $φ^1ψ^1 = π^N$ and $ψ^0φ^0 = π^N$ for some $N\geq 0$. Then the class of $X$ in $\Ext^1$ is annihilated by $π^{2N}$.

(2) The connecting homomorphism in \eqref{eq:hom_ext_sequence} is such that $X$ is constructed from a map $s_X:TX^1\to X^0$ as $X = (X^0 \oplus VX^1)/TX^1$. The claim is then obvious.
\end{proof}
Apply these considerations now to $(X,ρ)$, the universal pair over $M$. Let
\begin{equation}\label{eq:conn_et_universal}
0\lr X^0\lr X\lr X^1 \lr 0
\end{equation}
be its connected-étale sequence and let $S\to M$ be a point. Both $X^0$ and $X^1$ inherit a strict $O_F$-action and $ρ$ provides a pair of quasi-isogenies
\begin{equation}\label{eq:compat_framing}
ρ^0:\ov S \times_S X^0 \lr \ov S\times_{\Spec \mbF} \mbX^0,\ \ \ ρ^1:\ov S \times_S X^1 \lr \ov S\times_{\Spec \mbF} \mbX^1
\end{equation}
over the special fiber $\ov S = \mbF\tensor_{O_{\breve E}} S$. It follows that $\ov S \times_{S} X$ is isogeneous to $\ov S \times_S (X^0\times X^1)$ and hence $X$ is isogeneous to $X^0\times X^1$. The extension class $S\times_M X$ is thus torsion by Prop. \ref{prop:extension_p_div} (1) and comes from a map $TX^1\to X^0$.

Let $M_{\mbX^0}$ and $M_{\mbX^1}$ denote the RZ spaces for the two indicated strict $π$-divisible $O_F$-modules. Denote by $(Y^0, ρ^0)$ and $(Y^1, ρ^1)$ the universal points of $M_{\mbX^0}$ resp. $M_{\mbX^1}$. By abuse of notation, we continue to write $(Y^j, ρ^j)$ in place of $p_j^*(Y^j, ρ^j)$ for their pullbacks to $M_{\mbX^0}\times M_{\mbX^1}$. The Hom-functor $\underline{\Hom}(TY^1, Y^0)$ is then itself a strict $π$-divisible $O_F$-module. (In fact, since $M_{\mbX^0}\times M_{\mbX^1}$ is a union of formal spectra of strict complete local rings, it is isomorphic to $(Y^0)^{2n^1}$.) Thus we obtain a map
\begin{equation}\label{eq:fibration_RZ_space}
\begin{aligned}
M_\mbX & \longrightarrow \underline{\Hom}(TY^1, Y^0)\\
(X,ρ) & \longmapsto \big((X^0,ρ^0),\ (X^1, ρ^1),\ \text{extension class of $X$}\big).
\end{aligned}
\end{equation}
Here, we view $\underline{\Hom}(TY^1, Y^0)$ as a sheaf on $M_{\mbX^0}\times M_{\mbX^1}$ and $(X^j, ρ^j)$ denotes the image point in $M_{\mbX^j}$. Prop. \ref{prop:extension_p_div} proves the following proposition.
\begin{prop}\label{prop:fibration_RZ_space}
The map \eqref{eq:fibration_RZ_space} is an isomorphism. It moreover has the following functoriality property. Given $ζ = (ζ^0, ζ^1)\in \End^0(\mbX^0)\times \End^0(\mbX^1)$, the closed formal subscheme $\mcZ(ζ) \subseteq M_\mbX$ where $ρ^{-1}ζρ$ is a homomorphism agrees with the subfunctor of $ζ$-linear homomorphisms in
\begin{equation}\label{eq:x_lin_prep}
\left(\mcZ(ζ^0)\times \mcZ(ζ^1)\right)\times_{(M_{\mbX^0}\times M_{\mbX^1})} \underline{\Hom}(TY^1, Y^0).
\end{equation}
Here, an $S$-valued point $(X^0, ρ^0, X^1, ρ^1, h:TX^0\to X^1)$ is called $ζ$-linear if
$$(ρ^1)^{-1}ζ^1ρ^1\circ h = h\circ T((ρ^0)^{-1}ζ^0ρ^0).$$
\end{prop}

Consider now a regular semi-simple pair $β:(E_1,E_2)\to \End^0(\mbX)$, write $β^j:(E_1,E_2)\to \End^0(\mbX^j)$, $j = 0,1$ for its components. Then we have the three intersections $I(β,m)$, $I(β^0,m^0)$ and $I(β^1,m^1)$ that result from \eqref{eq:intersection_RZ} applied to $\mbX$, $\mbX^0$ and $\mbX^1$, respectively. The above considerations provide a map
\begin{equation}\label{eq:fibration_intersection}
\begin{aligned}
\Pi:I(β,m) & \longrightarrow \coprod_{m = m^0 + m^1} I(β^0,m^0)\times I(β^1,m^1)\\
(X_0\overset{φ_1}{\to} X_1\overset{φ_2}{\to} \ldots \overset{φ_r}{\to} X_r, ρ ) &
\longmapsto
\left((X_0^0\overset{φ_1}{\to} \ldots \overset{φ_r}{\to} X_r^0, ρ^0),\ (X_0^1\overset{φ_1}{\to} \ldots \overset{φ_r}{\to} X_r^1, ρ^1)\right).
\end{aligned}
\end{equation}
\begin{prop}\label{prop:fiber_count_geometric}
The map \eqref{eq:fibration_intersection} is finite locally free. Its degree over $I(β^0,m^0)\times I(β^1,m^1)$ equals
\begin{equation}\label{eq:reduction_factor_geometric}
\left|\mathrm{Disc}_{E_1/F}\cdot\mathrm{Disc}_{E_2/F}\right|_F^{-\frac{n^0\cdot n^1}2}\left|\Res\left(\mathrm{Inv}(β^0), \mathrm{Inv}(β^1)\right)\right|_F^{-1}\cdot q^{n^1\cdot |m^0|+n^0\cdot |m^1|}.
\end{equation}
\end{prop}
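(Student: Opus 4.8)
The strategy is to reduce the statement to pure linear algebra over $O_F$, as was done for the analytic side in Propositions \ref{prop:fiber_count_analytic_E03} and \ref{prop:fiber_count_analytic_E12}, and then to carry out that linear algebra computation. First, the map $\Pi$ of \eqref{eq:fibration_intersection} is a base change of \eqref{eq:fibration_RZ_space}, which is an isomorphism of strict $π$-divisible $O_F$-modules over $M_{\mbX^0}\times M_{\mbX^1}$ by Proposition \ref{prop:fibration_RZ_space}; in particular $\underline{\Hom}(TY^1,Y^0)$ is (locally) a power of $Y^0$ and thus formally smooth, so $\Pi$ is obtained by imposing linearity conditions for the various endomorphisms $β_1(O_{E_1})$, $β_2(O_{E_2})$ along the chain. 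By the functoriality clause of Proposition \ref{prop:fibration_RZ_space} together with Proposition \ref{prop:extension_p_div}(2), the fiber of $\Pi$ over a point $\big((X^0_\bullet,ρ^0),(X^1_\bullet,ρ^1)\big)$ of $I(β^0,m^0)\times I(β^1,m^1)$ is the set of tuples $(s_0,\ldots,s_r)$ of $O_F$-linear maps $s_i\colon TX^1_i\to X^0_i$ that commute with the transition maps induced by the $φ_j$, that are $β_1(O_{E_1})$-linear at the left end and $β_2(O_{E_2})$-linear at the right end. This is precisely the $π$-divisible/Tate-module analogue of the lattice-theoretic fiber description in Proposition \ref{prop:fiber_count_analytic_E12}; I would state this as a lemma and note that $\Pi$ is therefore a torsor under the finite flat group scheme of tuples with $s_0,s_r$ replaced by $0$, hence finite locally free, with degree equal to the cardinality of any geometric fiber.

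The heart is then computing that cardinality. I would work over an algebraically closed field, identify $TX^1_i\simeq O_F^{2n^1}$ and $X^0_i\simeq$ (a fixed height-$2n^0$ connected group, whose "lattice of periods" or covariant Dieudonné/crystal realization is a free $O_F$-module $\Lambda^0_i$ of rank $2n^0$). The commuting-chain condition lets me propagate: given $s_r$, the maps $s_i$ for $i<r$ are determined up to the kernels/cokernels of the $φ_j$-induced maps, which contribute the factor $q^{n^1|m^0|+n^0|m^1|}$ by a direct index count exactly as the Hecke-multiplicity bookkeeping in \eqref{eq:Satake}. It then remains to count the $s_r\colon O_{E_2}^{n^1}\to \Lambda^0_r$ that are $β^0_2$-equivariant (an $E_2$-semilinearity condition on one side, an $\mbX^0$-endomorphism-compatibility on the other), subject to the further $β_1$-equivariance imposed at the left end after transport along the chain. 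Diagonalizing the actions of $β^0_0=β^0_1$ and $β^0_2$ over $\bar F$, this becomes: count elements of $\Hom_{O_F}$ between two free modules that are simultaneously eigen-compatible for two regular semisimple operators whose eigenvalues are the roots of $\mr{Inv}(β^0)$ and $\mr{Inv}(β^1)$ — and the size of such a $\Hom$-group is governed by the valuations of the differences of those eigenvalues, i.e. by $|\mr{Res}(\mr{Inv}(β^0),\mr{Inv}(β^1))|_F$, together with the ramification of $E_1,E_2$ which enters through the discriminant factor $|\mathrm{Disc}_{E_1/F}\cdot\mathrm{Disc}_{E_2/F}|_F^{-n^0n^1/2}$ coming from comparing the $O_{E_i}$-lattice normalizations with the $O_F$-ones.

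I expect the main obstacle to be this last resultant/discriminant bookkeeping: tracking exactly how the two symmetry constraints (at the two ends of the chain, for two \emph{different} quadratic algebras $E_1$, $E_2$) interact, and checking that the combined index count produces exactly $|\mr{Res}(\mr{Inv}(β^0),\mr{Inv}(β^1))|_F^{-1}$ and not some divisor or multiple of it — in other words, that the eigen-compatible $\Hom$-module has the size predicted by the resultant with no stray correction. A clean way to organize this is to set up a single linear-algebra counting lemma over $O_F$ (valid uniformly in the $p$-adic and function-field cases, since only the $O_F$-module structures enter) that simultaneously computes both \eqref{eq:reduction_factor_geometric} and the numbers \eqref{eq:reduction_factor_E03}, \eqref{eq:reduction_factor_E12} appearing in Propositions \ref{prop:fiber_count_analytic_E03} and \ref{prop:fiber_count_analytic_E12}; this is presumably the content of the promised \S4.3 and lets the geometric and analytic reductions share one proof. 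Consistency between the $β$-count and the $α$-count then drops out because matching pairs have equal invariants, so the resultant and $|m^0|,|m^1|$ factors are literally the same, while the discriminant factors differ only in recording $E_1E_2$ versus $E_0E_3$, which carry the same ramification data.
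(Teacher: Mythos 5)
Your overall frame does match the paper's \S\ref{s:reduction}: the fiber of \eqref{eq:fibration_intersection} is the group of tuples $(ϕ_0,\ldots,ϕ_r)$ making \eqref{eq:diagram_condition_intersection} commute, $O_{E_1}$-linear at one end and $O_{E_2}$-linear at the other, and the paper indeed proves Props.~\ref{prop:fiber_count_analytic_E03}, \ref{prop:fiber_count_analytic_E12} and \ref{prop:fiber_count_geometric} by one uniform argument. But the two computations you defer are exactly where the content lies, and your sketches of them do not work as stated. The chain does \emph{not} contribute $q^{n^1|m^0|+n^0|m^1|}$ by a ``direct index count'': in the paper the fiber is the kernel of an explicit map $Φ$ (the matrix \eqref{eq:def_phi} built from $p_1^-,p_2^-,R_i,L_i$), row reduction (Lem.~\ref{lem:phi_isogeny}) gives $\deg Φ=\deg(p_1^-,\wt{p}_2^-)\cdot\prod_i\deg(L_i)$ with $\prod_i\deg(L_i)=q^{2n^0|m^1|}$, and the advertised $q$-power only emerges after the symmetrization trick of Lem.~\ref{lem:degree_identities} and \eqref{eq:key_trick}, which brings in the index $\deg(Λ_1\to Λ_2)=q^{2n^1|m^0|-2n^0|m^1|}$ between the $O_{E_1}$- and $O_{E_2}$-stable lattices and extracts a square root. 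Your ``propagate from $s_r$'' step also ignores the solvability (image) conditions at each stage, so the clean split into a Hecke factor times an end-condition factor is precisely what needs proof, not an input. Moreover, finiteness and local freeness are not automatic: they amount to $Φ$ being an isogeny, which the paper only obtains at the end from regular semisimplicity (nonvanishing of the resultant).

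More seriously, your explanation of the discriminant factor is wrong. It does not come from ``comparing $O_{E_i}$-lattice normalizations with the $O_F$-ones'' — Lem.~\ref{lem:p_div_property} shows $\Hom_{O_{E_i}}$ and $\Hom_{O_{E_i}\mr{-conj}}$ sit in an exact sequence with $\Hom_{O_F}$ with no index to account for. In the paper the two end conditions combine into the operator $q_1^+q_2^-\vert_{Λ_1^+}:f\mapsto f\circ w(β^1)-w(β^0)\circ f$, where $w(β^i)$ is the Howard--Li element $\mathbf{w}$; its degree is the reduced norm of $w(β^1)\otimes 1-1\otimes w(β^0)$, i.e.\ the resultant of the characteristic polynomials of the $w$'s, and the discriminant power arises exactly from the affine relation $w=d\,\mathbf{s}+c$ with $|d|_F=|\mathrm{Disc}_{E_1/F}\mathrm{Disc}_{E_2/F}|_F^{1/2}$ converting those polynomials into $\mr{Inv}(β^0)$ and $\mr{Inv}(β^1)$. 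So the ``stray correction'' you worry about is real — it \emph{is} the discriminant factor — and your plan has no mechanism producing it. Finally, ``diagonalizing over $\ov F$ and counting eigen-compatible homs'' is not available on the connected part: $\End^0_{E_1}(\mbX^0)$ is a division algebra and the fiber is a finite flat group scheme over a general base, so one must compute a degree, via reduction to the framing object and Dieudonné lattices, using the reduced-norm/resultant identity $\det(A\otimes 1-1\otimes B)=\Res(P_A,P_B)$ rather than a naive eigenvector count.
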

The proof will be given in the next section. Here we note the following description of $I(β, m)$, which is provided by Prop. \ref{prop:extension_p_div} and Prop. \ref{prop:fibration_RZ_space}. Namely
$$I(β,m) \subset \prod_{i = 0}^r \underline{\Hom}_{O_F}(TY_i^1, Y_i^0)\vert_{\coprod_{m = m^0 + m^1} I(β^0, m^0)\times I(β^1, m^1)}$$
is the subfunctor of those tuples $(γ_i:TY_i^1\to Y_i^0)_{i = 0}^r$ such that the diagram
\begin{equation}
\label{eq:diagram_condition_intersection}
\xymatrix{
TY_0^1 \ar[r]^{Tφ_1^1} \ar[d]_{γ_0}& TY_1^1 \ar[r]^{Tφ_2^1}\ar[d]_{γ_1}& \cdots \ar[r]^{Tφ_r^1}& TY_r^1\ar[d]_{γ_r}\\
Y_0^0 \ar[r]^{φ_1^0}& Y_1^0 \ar[r]^{φ_2^0}& \cdots \ar[r]^{φ_r^0} & Y_r^0\\
}
\end{equation}
commutes, such that $γ_0$ is $O_{E_2}$-linear, and such that $γ_r$ is $O_{E_1}$-linear.

\subsection{Degree computation}
\label{ss:zhongxin}

This section proves Prop. \ref{prop:fiber_count_analytic_E03}, Prop. \ref{prop:fiber_count_analytic_E12} and Prop. \ref{prop:fiber_count_geometric} in parallel, the arguments being the same for all three cases. Our main interest lies in the RZ space intersection, so we use the terminology of $π$-divisible $O_F$-modules. Fix a pair $β:(E_1,E_2)\to \End^0(\mbX)$, a tuple $m\in \mbZ_{\geq 0}^r$ and a decomposition $m = m^0 + m^1$. Let $Y_\bullet^1 \in I(β^1, m^1)(S)$ and $Y_\bullet^0\in I(β^0,m^0)(S)$ be two fixed $S$-valued points where $S$ is local artinian with residue field $\mbF$. Our aim is to compute the fiber over $(Y_\bullet^0, Y_\bullet^1)$ of
\begin{equation}\label{eq:fibration_RZ_sect_4}
I(β,m)\longrightarrow \coprod_{m = m^0 + m^1} I(β^0,m^0)\times I(β^1,m^1).
\end{equation}
The group $Y_\bullet^1$ only comes in via its Tate module, so we introduce the notation $T_\bullet := TY_\bullet^1$ and put $Y_\bullet = Y_\bullet^0.$

For the proofs in the analytic settings, one considers instead a pair $α:(E_0,E_3)\to M_{2n}(F)$ (resp. $β:(E_1,E_2)\to M_{2n}(F)$) that preserve a decomposition $V = V^0\oplus V^1$. Fix lattice chains $Λ_\bullet \in \mathcal{L}(α^0,m^0)$ and $T_\bullet \in \mathcal{L}(α^1,m^1)$, (resp. $Λ_\bullet \in \mathcal{L}(β^0,m^0)$ and $T_\bullet \in \mathcal{L}(β^1,m^1)$). Taking $S = \Spec \mbF$, the datum $Λ_\bullet$ is equivalent to that of a chain $Y_\bullet := Λ_\bullet \tensor_{O_F} (F/O_F)$ of étale $π$-divisible $O_F$-modules of type $(α^0,m^0)$, (resp. $(β^0, m^0)$). Similarly, $T_\bullet$ is the chain of Tate modules of $T_\bullet \tensor_{O_F} (F/O_F)$. In this way, all arguments about $π$-divisible $O_F$-modules in the following apply literally and provide the proofs of Prop. \ref{prop:fiber_count_analytic_E03} and Prop. \ref{prop:fiber_count_analytic_E12}. We only formulate the case of $(E_1, E_2)$ however, leaving it to the reader to substitute for $(E_0, E_3)$.

Consider the following $π$-divisible $O_F$-modules over $S$,
\begin{equation}\label{def:O_F_modules_I}
\begin{aligned}
H_i & = \Hom_{O_F}(T_i, Y_i),\ i = 0,\ldots,r\\
C_i & = \Hom_{O_F}(T_{i-1}, Y_i),\ i = 1,\ldots,r
\end{aligned}
\end{equation}
as well as
\begin{equation}\label{def:O_F_modules_II}
\begin{aligned}
H_0^+ & = \Hom_{O_{E_2}}(T_0, Y_0),\ \ \ \ \ H_0^-  = \Hom_{O_{E_2}\text{-conj}}(T_0, Y_0),\\
H_r^+ & = \Hom_{O_{E_1}}(T_r, Y_r),\ \ \ \ \ H_r^-  = \Hom_{O_{E_1}\text{-conj}}(T_r, Y_r).
\end{aligned}
\end{equation}
Here, we used the notation
$$\Hom_{O_{E_2}\text{-conj}}(T_0, Y_0) = \{f:T_0\to Y_0 \mid f(at) = a^{σ_2}f(t)\ \forall\ a\in O_{E_2}\}$$
in the first line and an analogous notation in the second. We will now define a homomorphism
\begin{equation}\label{eq:def_phi}
Φ = \m{p_2^-}{}{}{},
{-R_1}{L_1}{}{},
{}\ddots\ddots{},
{}{}{-R_r}{L_r},
{}{}{}{p_1^-}.: \bigoplus_{i = 0}^r H_i \longrightarrow H_0^- \oplus \bigoplus_{i = 1}^r C_i \oplus H_r^-
\end{equation}
such that $\ker(Φ)$ precisely describes the tuples $(γ_0, \ldots, γ_r)$ in \eqref{eq:diagram_conditions_E03}, \eqref{eq:diagram_conditions_E12} and \eqref{eq:diagram_condition_intersection}. The components $R_i$ and $L_i$ are the composition to the right and left in each square,
\begin{equation}\label{eq:def_LR}
\begin{aligned}
R_i:H_{i-1} & \longrightarrow C_i,\quad γ\longmapsto [Y_{i-1} \to Y_i]\circ γ,\ \ \ i = 1, \ldots, r\\
L_i:H_i & \longrightarrow C_i,\quad γ\longmapsto γ\circ [T_{i-1}\to T_i],\ \ \ i = 1,\ldots, r.
\end{aligned}
\end{equation}
The condition $L_i(γ_i) - R_i(γ_{i-1}) = 0$ precisely expresses commutativity of the square
$$\xymatrix{
T_{i-1} \ar[r] \ar[d]_{γ_{i-1}} & T_i \ar[d]^{γ_i}\\
Y_{i-1} \ar[r] & Y_i.}
$$
The kernels of $p_2^-:H_0\to H_0^-$ and $p_1^-:H_r\to H_r^-$ on the other hand should precisely be the $O_{E_2}$-linear (resp. $O_{E_1}$-linear) elements. Pick elements $ζ_i$ such that $O_{E_i} = O_F[ζ_i]$. Define
\begin{equation}\label{eq:def_p_minus}
\begin{aligned}
p_2^-:H_0 & \longrightarrow H^-_0,\ f \longmapsto [t\mapsto f(ζ_2t) - ζ_2f(t)]\\
p_1^-:H_r & \longrightarrow H^-_r,\ f \longmapsto [t\mapsto f(ζ_1t) - ζ_1f(t)].
\end{aligned}
\end{equation}
It is easily checked that $p_2^-$ and $p_1^-$ have image in $H^-_0$ resp. $H^-_r$ as claimed. For example,
$$\begin{aligned}
p_1^-(f)(ζ_1t) & = f(ζ_1^2t) - ζ_1f(ζ_1t)\\
& = (ζ_1 + ζ_1^{σ_1}) f(ζ_1t) - ζ_1ζ_1^{σ_1} f(t) - ζ_1 f(ζ_1t)\\
& = ζ_1^{σ_1} [f(ζ_1t) - ζ_1f(t)]\\
& = ζ_1^{σ_1}p_1^-(f)(t).
\end{aligned}$$
This concludes the definition of \eqref{eq:def_phi}.

\begin{lem}\label{lem:p_div_property}
Let $E/F$ be an étale quadratic extension and $T$ an $O_E$-lattice. Let $Y$ be a $π$-divisible group with $O_E$-action, write $O_E = O_F[ζ]$. Then $\Hom_{O_E}(T, Y)$ is again a $π$-divisible $O_F$-module and equals the kernel of
$$p:\Hom_{O_F}(T, Y) \lr \Hom_{O_E\text{-conj}}(T, Y),\ f \longmapsto [t \mapsto f(ζt) - ζf(t)].$$
In particular, the following sequence is exact,
\begin{equation}\label{eq:ex_seq_projection}
0 \longrightarrow \Hom_{O_E}(T, Y)\overset{ι}{\lr} \Hom_{O_F}(T, Y) \overset{p}{\longrightarrow} \Hom_{O_E\text{-conj}}(T, Y) \longrightarrow 0.
\end{equation}
\end{lem}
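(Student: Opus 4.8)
The plan is to check, in order: (i) that the three Hom-sheaves in \eqref{eq:ex_seq_projection} are (strict) $\pi$-divisible $O_F$-modules, (ii) that $p$ is well-defined, i.e. lands in $\Hom_{O_E\mr{-conj}}(T,Y)$, (iii) that $\ker p = \Hom_{O_E}(T,Y)$, and (iv) that $p$ is surjective. Throughout I write $\sigma$ for the nontrivial involution of $E/F$ and fix $\zeta$ with $O_E = O_F[\zeta]$; then $\zeta^2 = \tau\zeta - N$ with $\tau := \zeta+\sigma(\zeta)$ and $N := \zeta\sigma(\zeta)$ in $O_F$, so $\{1,\zeta\}$ is an $O_F$-basis of $O_E$ and $\tau-\zeta = \sigma(\zeta)$. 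The formation of $p$ is additive in the $O_E$-module $T$, so for (iv) one will be allowed to reduce to $T = O_E$.

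For (i): since $T$ is free of some rank $d$ over $O_E$, it is free of rank $2d$ over $O_F$ (an $O_E$-basis times $\{1,\zeta\}$), hence $\Hom_{O_F}(T,Y) \cong Y^{2d}$, $\Hom_{O_E}(T,Y)\cong Y^{d}$, and $\Hom_{O_E\mr{-conj}}(T,Y) = \Hom_{O_E}(T^{\sigma},Y) \cong Y^{d}$, where $T^{\sigma}$ is $T$ with its $O_E$-action precomposed by $\sigma$, still free of rank $d$. Being a strict $\pi$-divisible $O_F$-module passes to finite products, so all three are of the claimed type; this also proves the first sentence of the Lemma. For (ii), let $f$ be $O_F$-linear and put $g = p(f)$; from $\zeta^2 = \tau\zeta - N$ and $O_F$-linearity one has $f(\zeta^2 t) = \tau f(\zeta t) - N f(t)$, so
\[
g(\zeta t) = f(\zeta^2 t) - \zeta f(\zeta t) = (\tau-\zeta) f(\zeta t) - N f(t) = \sigma(\zeta)\bigl(f(\zeta t) - \zeta f(t)\bigr) = \sigma(\zeta) g(t),
\]
using $\tau-\zeta = \sigma(\zeta)$ and $\zeta\sigma(\zeta) = N$; thus $g$ is $O_E$-conjugate-linear. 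For (iii): $f\in\ker p$ means $f(\zeta t) = \zeta f(t)$ for all $t$, which, as $f$ is $O_F$-linear and $O_E = O_F[\zeta]$, is exactly $O_E$-linearity of $f$.

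It remains to prove (iv), and this is really the only step that is not purely formal. Reducing to $T = O_E$, I would use the $O_F$-basis $\{1,\zeta\}$ to identify $\Hom_{O_F}(O_E,Y)\xrightarrow{\sim} Y\times Y$ via $f\mapsto (f(1),f(\zeta))$, and $\Hom_{O_E\mr{-conj}}(O_E,Y)\xrightarrow{\sim} Y$ via $g\mapsto g(1)$ (inverse $y\mapsto [a\mapsto \iota(\sigma a)(y)]$, which is conjugate-linear with value $y$ at $1$). Under these, the composite of $p$ with the second isomorphism carries $(y_0,y_1)$ to $y_1 - \iota(\zeta)(y_0)$, which is split surjective via $y\mapsto (0,y)$; hence $p$ is (split) surjective and \eqref{eq:ex_seq_projection} is exact. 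The obstacle such as it is — the surjectivity — dissolves exactly because $\{1,\zeta\}$ is an $O_F$-basis of $O_E$: this isolates "the $\zeta$-component" of $\Hom_{O_F}(O_E,Y)$, a copy of $Y$ that $p$ maps isomorphically onto $\Hom_{O_E\mr{-conj}}(O_E,Y)$. One should note that the resulting splitting of \eqref{eq:ex_seq_projection} is non-canonical, depending on the choice of $\zeta$ and of an $O_F$-basis of $T$.
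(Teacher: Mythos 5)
Your proof is correct and follows essentially the same route as the paper's: both reduce to $T = O_E$ by choosing a basis and compute $p$ in the coordinates $f \mapsto (f(1), f(\zeta))$, identifying the kernel with the graph $\{(y,\zeta y)\}\cong Y$. The only (cosmetic) differences are that you make $p$'s well-definedness explicit via $\zeta^2=\tau\zeta-N$ and prove surjectivity by exhibiting the section $y\mapsto(0,y)$, whereas the paper argues surjectivity by comparing heights of the cokernel with $\Hom_{O_E\mathrm{-conj}}(T,Y)$; your splitting argument is, if anything, more direct.
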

\begin{proof}
Choosing a basis of $T$, it suffices to consider $T = O_E$. Then both $\Hom_{O_E}(O_E, Y)$ and $\Hom_{O_E\text{-conj}}(O_E, Y)$ are isomorphic to $Y$ by the map $f\mapsto f(1)$. The choice of $ζ$ also allows to identify
$$\Hom_{O_F}(O_E, Y) \iso Y^2,\ f \longmapsto (f(1), f(ζ)).$$
In these coordinates, $p$ is given by $p(y_1, y_2) = y_2 - ζy_1$. Its kernel is isomorphic to $Y$ via $(y_1, ζy_1)\mapsto y_1$ and hence a $π$-divisible $O_F$-module as claimed. Then the cokernel of $ι$ is a $π$-divisible $O_F$-module and has the same height as $\Hom_{O_E\text{-conj}}(T, Y)$, so maps isomorphically onto it.
\end{proof}

In particular, $\ker(Φ)$ precisely describes the fiber of \eqref{eq:fibration_RZ_sect_4}. The maps $R_i$ and $L_i$ are isogenies, so we have quasi-isogenies $L_i^{-1}R_i:H_{i-1} \dashrightarrow H_i$ that allow to define a quasi-homomorphism
$$\wt{p}_1^- := p_1^- L_r^{-1}R_rL_{r-1}^{-1}R_{r-1}\cdots L_1^{-1}R_1: H_0 \dashrightarrow H_r^-.$$
\begin{lem}\label{lem:phi_isogeny}
The homomorphism $Φ$ is an isogeny if and only if the quasi-homomorphism
$$\m{p_2^-}, {\wt{p}_1^-}.: H_0\dashrightarrow H_0^-\oplus H_r^-$$
is a quasi-isogeny. If both are quasi-isogenies, then their degrees are related by
\begin{equation}\label{eq:deg_relation}
\deg(Φ) = \deg\m{p_2^-}, {\wt {p}_1^-}. \cdot\prod_{i=1}^r\deg(L_i).
\end{equation}
\end{lem}
\begin{proof}
We may multiply $Φ$ from the left by a quasi-isogeny of degree $1$ in the following way,
$$
\m
{I_{\frac n2}}{}{}{}{},
{}{I_{n}}{}{}{},
{}{}\ddots{}{},
{}{}{}{-p_1^-L_r^{-1}}{I_{\frac n2}},
{}{}{}{I_{n}}{}.
\m{p_2^-}{}{}{},
{-R_1}{L_1}{}{},
{}\ddots\ddots{},
{}{}{-R_r}{L_r},
{}{}{}{p_1^-}.
=
\m{p_2^-}{}{}{}{},
{-R_1}{L_1}{}{}{},
{}\ddots\ddots{}{},
{}{}{R_{r-1}}{L_{r-1}}{},
{}{}{}{p_1^-L_r^{-1}R_r}{},
{}{}{}{-R_r}{L_r}..
$$
Iterating this operation leaves us with
$$
\m{p_1^-}{}{}{},
{\wt{p}_2^-}{}{}{},
{-R_1}{L_1}{}{},
{}\ddots\ddots{},
{}{}{-R_r}{L_r}..
$$
The degree of the diagonal here is the right hand side of \eqref{eq:deg_relation} and the claim follows.
\end{proof}
We now shift our attention to the framing object. The link with Lem. \ref{lem:phi_isogeny} will be given at the end of the section. Write
$$N = N(\mbX),\ \ N^0 = N(\mbX^0),\ \ N^1 = N(\mbX^1)$$
for the isocrystals of the respective $O_F$-modules. These are $\breve F$-vector spaces of dimensions $2n$, $2n^0$ and $2n^1$, respectively. Also put
$$P = N(\Hom_{O_F}(T\mbX^1, \mbX^0)) = \Hom(N^1,N^0)$$
which has $\breve F$-dimension $4n^0n^1$. Define the following subspaces of $P$, each of half that dimension,
\begin{equation}\label{eq:subspaces_P}
\begin{aligned}
P^+_1 = \Hom_{E_1}(N^1,N^0),& \ \ \ P^-_1 = \Hom_{E_1\text{-conj}}(N^1, N^0)\\
P^+_2 = \Hom_{E_2}(N^1,N^0),& \ \ \ P^-_2 = \Hom_{E_2\text{-conj}}(N^1, N^0).
\end{aligned}
\end{equation}
We also consider the projection maps to $P^{\pm}_i$, for $i = 1,2$,
\begin{equation}\label{eq:def_q}
\begin{aligned}
q_i^+:P & \longrightarrow P^+_i,\ \ f \longmapsto [t\mapsto f(ζ_it) - ζ_i^{σ_i}f(t)]\\
q_i^-:P & \longrightarrow P^-_i,\ \ f \longmapsto [t\mapsto f(ζ_it) - ζ_i^{σ_i}f(t)].
\end{aligned}
\end{equation}
The given points $(Y_\bullet^j, ρ_\bullet^j, φ_\bullet^j) \in I(β^j,m^j)(S)$ define lattices in the above $\breve F$-vector spaces. Let $M(-)$ denotes the integral Dieudonné module functor over the special point of $S$. Set
\begin{equation}\label{eq:def_lattices}
\begin{aligned}
M^0_1 = N(ρ_r^0)(M(Y_r^0)),&\qquad M^0_2 = N(ρ_0^0)(M(Y_0^0))\\[1mm]
M^1_1 = N(ρ_r^1)(M(Y_r^1)),&\qquad M^1_2 = N(ρ_0^1)(M(Y_0^1)).
\end{aligned}
\end{equation}
Then $M^0_1 \subset N^0$ and $M^1_1\subset N^1$ are $O_{E_1}$-stable, while $M^0_2$ and $M^1_2$ are $O_{E_2}$-stable. We note for later use that there is an inclusion $M^0_2\subseteq M^0_1$ of index $|m^0|$ because of the chain of isogenies
$$Y_0^0 \overset{φ^0_1}{\lr} Y_1^0 \overset{φ^0_2}{\lr} \ldots \overset{φ^0_r}{\lr} Y_r^0$$
that satsfies $ρ_0^0 = ρ_r^0\circ φ^0_r \circ \cdots \circ φ^0_1$. In the same way, $M^1_2\subseteq M^1_1$ with index $|m^1|$.

Passing to $\Hom$-spaces, these lattices provide Dieudonné lattices in $P$ and the $P_i^{\pm}$,
\begin{equation}
\begin{aligned}
Λ_i & = \Hom_{O_F}(M_i^1, M_i^0)\\
Λ_i^+ & = \Hom_{O_{E_i}}(M_i^1, M_i^0)\\
Λ_i^- & = \Hom_{O_{E_i}\text{-conj}}(M_i^1, M_i^0).
\end{aligned}
\end{equation}
Just like in \eqref{eq:ex_seq_projection}, the projections from \eqref{eq:def_q} restrict to surjections $q_i^{\pm}:Λ_i\twoheadrightarrow Λ_i^{\pm}$. Let us adopt the following notion: Given two $O_{\breve F}$-lattices or $O_F$-lattices $Λ$ and $Λ'$, we write $f:Λ\dashrightarrow Λ'$ for homomorphisms $f:Λ[π^{-1}]\to Λ'[π^{-1}]$ together with the datum of the two lattices. We call such morphisms quasi-homomorphisms. We call a quasi-homomorphism $f$ a quasi-isogeny if $f:Λ[π^{-1}]\to Λ'[π^{-1}]$ is bijective. In this case, the degree of $f$ is defined as the valuation of the ratio $\det(f(Λ'))/\det(Λ)$. For example, $Λ_1$ and $Λ_2$ are both lattices in the $\breve F$-vector space $P$. The inclusion maps define a quasi-isogeny $Λ_2 \dashrightarrow Λ_1$ with
\begin{equation}\label{eq:degree_lambdas}
\begin{aligned}
\deg(Λ_2\dashrightarrow Λ_1) &\ =\ [Λ_1:Λ_2]\\
&\ = \ [M^0_1:M^0_2] \dim_{\breve F}(N^1) - [M^1_1:M^1_2] \dim_{\breve F}(N^0)\\
&\ = \ 2n^1|m^0| - 2n^0|m^1|.
\end{aligned}
\end{equation}

\begin{prop}\label{prop:key_degree_computation}
Assume that $β$ is regular semi-simple. Then the quasi-homomorphism
$$(q_1^-, q_2^-):Λ_2 \dashrightarrow Λ_1^-\times Λ_2^-$$
is a quasi-isogeny and its degree is
\begin{equation}\label{eq:yaoling}
\left|\mathrm{Disc}_{E_1/F}\cdot\mathrm{Disc}_{E_2/F}\right|_F^{-\frac{n^0\cdot n^1}2}\left|\Res\left(\mathrm{Inv}(β^0), \mathrm{Inv}(β^1)\right)\right|_F^{-1}\cdot q^{n^1\cdot|m^0| - n^0 \cdot|m^1|}.
\end{equation}
Here, we have used the notation $|x|_F := |\mr{Nm}_{E_3/F}(x)|^{1/2}_F$ for elements $x\in E_3$.
\end{prop}

We first prove two lemmas.

\begin{lem}\label{lem:degree_identities}
If one of the six maps appearing below is a quasi-isogeny, then all of them are. In this case, the degrees of
\begin{equation}\label{eq:three_1}
\begin{array}{c}
(q_1^-, q_2^-): Λ_1 \dashrightarrow Λ_1^-\times Λ_2^-,\\[3mm]
q_2^-\vert_{Λ_1^+}:Λ_1^+ \dashrightarrow Λ_2^-\ \ \mr{and}\ \ q_2^+\vert_{Λ_1^-}:Λ_1^-\dashrightarrow Λ_2^+
\end{array}
\end{equation}
all agree. Similarly, the degrees of
\begin{equation}\label{eq:three_2}
\begin{array}{c}
(q_1^-, q_2^-):Λ_2 \dashrightarrow Λ_1^-\times Λ_2^-,\\[3mm]
q_1^-\vert_{Λ_2^+}:Λ_2^+ \dashrightarrow Λ_1^-\ \ \mr{and}\ \ q_1^+\vert_{Λ_2^-}:Λ_2^-\dashrightarrow Λ_1^+
\end{array}
\end{equation}
all agree.
\end{lem}
\begin{proof}
We prove the claim for the first triple of maps. Consider the exact sequence from \eqref{eq:ex_seq_projection} to obtain the following commutative diagram
$$
\xymatrix{
0\ar[r]& Λ_1^+ \ar@{-->}[d]_{q_{2}^-} \ar[r] & Λ_1 \ar[r]^{q_{1}^-}\ar@{-->}[d]_{\m{q_1^-},{q_{2}^-}.}& Λ_1^- \ar[r]\ar@{=}[d] & 0\\
0\ar[r] & Λ_2^- \ar[r] & Λ_1^-\times Λ_2^- \ar[r]^-{\mr{pr}} & Λ_1^- \ar[r] & 0.
}
$$
(Note that $q_1^-:Λ_1\to Λ_1^-$ is surjective even if $E_1/F$ is ramified; the argument is as for the map $p$ in \eqref{eq:ex_seq_projection}.) The claim follows from this diagram for the two maps $(q_1^-, q_2^-)$ and $q_2^-\vert_{Λ_1^+}$ in \eqref{eq:three_1}. Consider now any quasi-isogeny $z\in \End^0_F(\mbX^0)$ that Galois commutes with both $E_1$ and $E_2$ in the sense that $zζ_i = ζ_i^{σ_i}z$ for both $i = 1,2$. (Such elements always exist; a specific choice is given by the element $\mathbf{z}$ from \cite[(2.4.2)]{HL}. It Galois commutes with both $E_i$ by \cite[Prop. 2.4.2]{HL} and is invertible because $β^0$ is regular semi-simple. The latter implies $\mathbf{t}$ in \emph{loc. cit.} to be invertible by definition, then \cite[Prop. 2.4.1]{HL} implies $\mathbf{z}$ invertible.) The element $z$ intertwines $q_2^-\vert_{Λ_1^+}$ and $q_2^+\vert_{Λ_1^-}$ in the sense that the following diagram commutes.
$$\xymatrix{
Λ_1^+
\ar@{-->}[rr]^{f\longmapsto z \circ f}
\ar@{-->}[d]_{q_2^-}
&&
Λ_1^-
\ar@{-->}[d]^{q_2^+}
\\
Λ_2^-\ar@{-->}[rr]^{f\longmapsto z\circ f} && Λ_2^+.
}$$
Since $z$ is a quasi-isogeny and since the horizontal maps have the same degree, this proves the claimed shared properties of the first triple. The case of the second triple is the same by symmetry. Moreover, $Λ_1$ and $Λ_2$ are lattices in the same isocrystal, so the first three maps are quasi-isogenies if and only if the second triple is. 
\end{proof}

\begin{lem}\label{lem:resultant_occurs}
The endomorphism $q_1^+q_2^-\vert_{Λ_1^+}:Λ_1^+ \to Λ_1^+$ is a quasi-isogeny of degree
$$
\left|\mathrm{Disc}_{E_1/F}\cdot\mathrm{Disc}_{E_2/F}\right|_F^{-n^0\cdot n^1}\left|\Res\left(\mathrm{Inv}(β^0), \mathrm{Inv}(β^1)\right)\right|_F^{-2}.
$$
\end{lem}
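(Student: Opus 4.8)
The plan is to reduce the lemma to a determinant computation over $\breve F$ and then to the classical resultant identity for Sylvester operators, the numerology being forced by regular semisimplicity of $β$.

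Since the claim only concerns the degree of a quasi-endomorphism of the isocrystal $P_1^+$, it is independent of the chosen lattice, so I would discard all integral structures and argue with the $\breve F$-vector spaces $N^0$, $N^1$, $P=\Hom(N^1,N^0)$ and the $\breve F$-linear operators $q_2^-$ and $q_1^+$ of \eqref{eq:def_q_minus} and \eqref{eq:def_q_plus}. First, $q_1^+q_2^-$ does restrict to an endomorphism of $P_1^+$, since $q_2^-(P)\subseteq P_2^-$ and $q_1^+(P)\subseteq P_1^+$. It is a quasi-isogeny: $\ker q_2^-=P_2^+$ and $\ker q_1^+=P_1^-$, while $P_1^+\cap P_2^+=0$ because a nonzero element there would be simultaneously $E_1$- and $E_2$-linear, hence would intertwine the $\langle β^1_1(E_1),β^1_2(E_2)\rangle$-module $N^1$ with the $\langle β^0_1(E_1),β^0_2(E_2)\rangle$-module $N^0$, which by Prop.\ref{prop:universal_pair} is impossible unless $\mr{Inv}(β^0)$ and $\mr{Inv}(β^1)$ share a root, contradicting regular semisimplicity of $β$; and $P_1^-\cap P_2^-=0$ follows from this upon composing with the invertible Galois-commuting element $\mathbf z\in\End^0(\mbX^0)$ of \cite[(2.4.2)]{HL} used in the proof of Lem.\ref{lem:degree_identities}. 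As $\dim P_1^+=\dim P_2^-=\dim P_1^-$, the maps $q_2^-\vert_{P_1^+}$ and $q_1^+\vert_{P_2^-}$ are isomorphisms, and it remains to compute $|\det(q_1^+q_2^-\vert_{P_1^+})|_F$; the degree is its reciprocal. In particular the degree is independent of $m$, consistently with Prop.\ref{prop:key_degree_computation}, where the $q$-power enters only through the comparison of the lattices $Λ_1$ and $Λ_2$.

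For the determinant I would split a quadratic extension. At least one of $E_1/F$, $E_2/F$ is unramified; assume it is $E_1$ (otherwise interchange $E_1$ and $E_2$ below). Then $β^\bullet_1(ζ_1)$ is semisimple with eigenvalues $ζ_1,σ_1(ζ_1)\in\breve F$, giving eigenspace decompositions $N^0=N^0_+\oplus N^0_-$ and $N^1=N^1_+\oplus N^1_-$, of dimensions $n^0$ resp.\ $n^1$, and $P_1^+=\Hom(N^1_+,N^0_+)\oplus\Hom(N^1_-,N^0_-)$. Writing $a^j,b^j,c^j,d^j$ for the four blocks of $β^j_2(ζ_2)$ ($j=0,1$) with respect to these, a direct substitution into the formulas for $q_2^-$ and $q_1^+$ shows that $q_1^+q_2^-$ acts on $\Hom(N^1_+,N^0_+)$ by $X\mapsto θ_1(Xa^1-a^0X)$ and on $\Hom(N^1_-,N^0_-)$ by $X\mapsto θ_1(d^0X-Xd^1)$, where $θ_1:=ζ_1-σ_1(ζ_1)$ satisfies $θ_1^2=\mr{Disc}_{E_1/F}$. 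Hence $\det(q_1^+q_2^-\vert_{P_1^+})=\pm\,θ_1^{2n^0n^1}\,R_+R_-$, with $R_+:=\det(X\mapsto a^0X-Xa^1)$ and $R_-:=\det(X\mapsto d^0X-Xd^1)$; by the Sylvester--resultant identity $R_+=\prod_{i,j}(α^0_i-α^1_j)$ over the eigenvalues $α^j_\bullet$ of $a^j$, and likewise $R_-$ over those of $d^j$.

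The last and most delicate step identifies these eigenvalues with roots of the invariants. Using $ζ_2^2=\mr{tr}_{E_2/F}(ζ_2)\,ζ_2-N_{E_2/F}(ζ_2)$, the diagonal blocks $a^j$, $d^j$ commute with the products $b^jc^j$ and $c^jb^j$, and the latter are exactly the realizations of the Howard--Li element $\mathbf s_{β^j}\in E_3\otimes_F\End^0(\mbX^j)$ of \cite[\S2]{HL}; unwinding its construction should show that the characteristic polynomials of $a^j$ and $d^j$ agree, up to sign and the single affine substitution $T\mapsto θ_2T+c$, with $\mr{Inv}(β^j)$ resp.\ $σ_3(\mr{Inv}(β^j))$, where $θ_2$ is a square root of $\mr{Disc}_{E_2/F}$ recording the passage from the integral generator $ζ_2$ to the trace-normalized generator of \emph{loc.\ cit.}, and $c$ is independent of $j$. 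Substituting, and using that the resultant is invariant up to sign under a simultaneous affine change of variable, one gets $R_+=u_+\,θ_2^{n^0n^1}\,\Res(\mr{Inv}(β^0),\mr{Inv}(β^1))$ and $R_-=u_-\,θ_2^{n^0n^1}\,σ_3(\Res(\mr{Inv}(β^0),\mr{Inv}(β^1)))$ with $u_\pm$ units; since $σ_3$ changes the resultant only by a sign, up to a unit
\begin{equation*}
\det\big(q_1^+q_2^-\vert_{P_1^+}\big)=(θ_1^2)^{n^0n^1}\,(θ_2^2)^{n^0n^1}\,\Res\big(\mr{Inv}(β^0),\mr{Inv}(β^1)\big)^2,
\end{equation*}
and taking $|\cdot|_F$ and inverting yields $|\mr{Disc}_{E_1/F}\cdot\mr{Disc}_{E_2/F}|_F^{-n^0n^1}\,|\Res(\mr{Inv}(β^0),\mr{Inv}(β^1))|_F^{-2}$, as claimed. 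The main obstacle is this identification: pinning down precisely how the blocks' characteristic polynomials relate to $\mr{Inv}(β^j)$, and above all tracking the exact powers of $\mr{Disc}_{E_1/F}$ and $\mr{Disc}_{E_2/F}$ contributed by the two generator normalizations, in a way that remains valid at $p=2$, where $ζ_2$ cannot be centered by halving its trace.
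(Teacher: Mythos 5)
Your overall strategy is workable, but as written it has a genuine gap at exactly the point where the lemma's content lies. The preliminary parts are fine: reducing to the isocrystals, the kernel identifications $\ker q_2^-=P_2^+$, $\ker q_1^+=P_1^-$, and the blockwise computation after splitting $E_1$ over $\breve F$, which correctly shows that $q_1^+q_2^-$ acts on $\Hom(N^1_+,N^0_+)$ and $\Hom(N^1_-,N^0_-)$ by Sylvester operators scaled by $θ_1=ζ_1-ζ_1^{σ_1}$, producing the factor $|\mathrm{Disc}_{E_1/F}|_F^{-n^0n^1}$. But the step you yourself flag as "the main obstacle" — that the characteristic polynomials of the diagonal blocks $a^j$, $d^j$ of $β_2^j(ζ_2)$ are affine transforms of $\mr{Inv}(β^j)$ resp. $σ_3(\mr{Inv}(β^j))$ with slope a square root of $\mathrm{Disc}_{E_2/F}$, uniformly in $p$ — is asserted ("unwinding its construction should show") rather than proved, and it is precisely this identification that produces $\left|\Res(\mr{Inv}(β^0),\mr{Inv}(β^1))\right|_F^{-2}$ and the $\mathrm{Disc}_{E_2/F}$ power. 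Your parenthetical claim that $b^jc^j$, $c^jb^j$ "are exactly the realizations of $\mathbf s_{β^j}$" is likewise unverified, and the whole bookkeeping is left dependent on it; without this the proof does not close, and your own worry about $p=2$ (no trace-halving available) is a symptom that the normalization has not been pinned down.

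For comparison, the paper's proof avoids the eigenspace splitting entirely: a two-line computation using $β_1^0(x)f=fβ_1^1(x)$ and the identity \eqref{eq:simple_identity} shows that on $Λ_1^+$ one has $q_1^+q_2^-(f)=f\circ w(β^1)-w(β^0)\circ f$, where $w(β^j)=β_2^j(ζ_2)β_1^j(ζ_1)+β_1^j(ζ_1^{σ_1})β_2^j(ζ_2^{σ_2})$ is the Howard--Li element $\mathbf w$ of \cite[(2.4.1)]{HL}. Since $\mathbf w$ is $E_1$- and $E_2$-linear by \cite[Prop.~2.4.2]{HL}, the degree is the absolute value of $\Res(\mathrm{charred}\,w(β^0),\mathrm{charred}\,w(β^1))$, and the already-established affine relation $\mathbf w=d\,\mathbf s_β+c$ with $d=(ζ_1-ζ_1^{σ_1})(ζ_2-ζ_2^{σ_2})$ from \cite[Prop.~2.4.1]{HL} delivers in one stroke both the resultant of the invariants and the exact discriminant powers, with no division by $2$, so the case $p=2$ is automatic. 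Your missing identification is essentially equivalent to reproving that input of \cite{HL} in block-matrix form; either carry that computation out explicitly (it is doable along the lines of your $(1,1)$-block analysis, since the $++$ block of $\mathbf w$ equals $θ_1 a^j$ plus a scalar), or simply invoke $\mathbf w$ as the paper does and dispense with the eigenspace decomposition.
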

\begin{proof}
For any $f\in Λ_1^+$, we have $β_1^0(x)\circ f = f\circ β_1^1(x)$ for every $x\in E_1$. From definition \eqref{eq:def_q} we obtain that
\begin{equation}\label{eq:q_composition}
\[split]{
q_1^+(q_2^-(f))
= &
\underbrace{\left(
	f\circ β_2^1 (ζ_2)
	- 
	β_2^0(ζ_2)\circ f 
\right)
}_{\text{definition of $q_2^-(f)$ in \eqref{eq:def_q}}}
\circ β_1^1(ζ_1)\\
 & -
β_1^0(ζ^{σ_1}_1) \circ
\underbrace{
\left(
	β_2^0(ζ^{σ_2}_2)\circ f 
	- 
	f\circ β_2^1(ζ^{σ_2}_2)
\right)
}_{\text{equals $q_2^-(f)$, see \eqref{eq:simple_identity} below}}\\
= &
f\circ w(β^1)
-
w(β^0)\circ f
}
\end{equation}
with the two elements
\begin{equation}\label{eq:def_w}
\begin{aligned}
w(β^0) & = \left(β_2^0(ζ_2) \circ β_1^0(ζ_1) + β_1^0(ζ^{σ_1}_1)\circ β_2^0(ζ^{σ_2}_2)\right)\\
w(β^1) & = \left(β_2^1(ζ_2) \circ β_1^1(ζ_1) + β_1^1(ζ^{σ_1}_1)\circ β_2^1(ζ^{σ_2}_2)\right).
\end{aligned}
\end{equation}
We have also used the simple observation
\begin{equation}\label{eq:simple_identity}
f\circ ζ_i - ζ_i \circ f = ζ_i^{σ_i}\circ f - f \circ ζ_i^{σ_i}
\end{equation}
for all maps $f$, which results from $f \circ \mr{tr}(ζ_i) = \mr{tr}(ζ_i) \circ f.$
The elements $w(β^0)$ and $w(β^1)$ are precisely the elements $\mathbf w$ for the two pairs $β^0$ and $β^1$ from \cite[(2.4.1)]{HL}. They have the property that each of them is both $O_{E_1}$- and $O_{E_2}$-linear, i.e. $w(β^i)$ centralizes $β^i$, see \cite[Prop. 2.4.2]{HL}. Note that $\End_{E_1}^0(\mbX^1) \iso M_{n^1}(E_1)$, also write $D = \End_{E_1}^0(\mbX^0)$. Thus we may view
$$w(β^0) \in D,\ \ w(β^1) \in M_{n^1}(E_1)$$
and $q_1^+ q_2^-\vert_{Λ_1^+} = w(β^1)\tensor 1 - 1 \tensor w(β^0) \in M_{n^1}(E_1) \tensor_{E_1} D$.
We recall a basic fact about the resultant. Assume we are given $A\in M_n(k)$ and $B\in M_m(k)$ where $k$ is any field. Then
$$\det\big[A\tensor 1 - 1 \tensor B: M_{n\times m}(k)\longrightarrow M_{n\times m}(k),\ f\longmapsto Af - fB\big] = \mr{Res}(P_A, P_B),$$
where $P_A$ and $P_B$ denote the characteristic polynomials of $A$ and $B$. (This identity is clear if $A$ and $B$ are diagonalizable. The general case follows from a Zariski density argument and by passing to the algebraic closure of $k$.) Put differently, the determinant of $A\tensor 1 - 1 \tensor B \in M_n(k)\tensor_k M_m(k)$, viewed as $(nm\times nm)$-matrix is $\mr{Res}(P_A, P_B)$. Since determinants may be computed after scalar extension, these considerations also apply to central simple algebras and we obtain
\begin{equation}\label{eq:deg_comp_intermediate}
\begin{aligned}
\deg(q_1^+ q_2^-\vert_{Λ_1^+}) &= \left\vert\mr{Nrd}_{M_{n^1}(D)/E_1}\left(w(β^1)\tensor 1 - 1 \tensor w(β^0)\right)\right\vert^{-1}_{E_1}\\
&= \left\vert\mr{Res}(\mr{charred}(w(β^0)), \mr{charred}(w(β^1)))\right\vert^{-1}_{E_1}\\
&= \left\vert\mr{Res}(\mr{charred}(w(β^0)), \mr{charred}(w(β^1)))^2\right\vert^{-1}_{F}.
\end{aligned}
\end{equation}
It is shown in \cite[Prop. 2.4.1]{HL} that the two characteristic polynomials here are related to the invariants of $β^0$ and $β^1$ by a linear transformation. More precisely, one has $w(β^i) = d\mathbf{s}_{β^i} + c$ in $E_3 \tensor_F M_{2n^1}(F)$ resp. $E_3\tensor_F D_{1/2n^0}$ where $c$, $d\in E_3$ are the following two elements,
$$c = -(ζ_1\tensor ζ_2^{σ_2} + ζ_1^{σ_1}\tensor ζ_2),\ \ \ d = (ζ_1-ζ_1^{σ_1})\tensor (ζ_2 - ζ_2^{σ_2}).$$
(These a priori lie in $E_1\tensor_FE_2$ but are clearly $σ_1\tensor σ_2$-invariant.) It follows that if $μ_1,\ldots,μ_{n^i}\in \bar F$ are the eigenvalues of $\Inv(β^i)$ with respect to a homomorphism $ρ:E_3\to \bar F$, then $dμ_1+c,\ldots,dμ_{n^i}+c$ are the eigenvalues of $\charred(w(β^i))$ with respect to $ρ$. The resultant term in \eqref{eq:deg_comp_intermediate} is thus equal to
\begin{equation}
\left\vert\left((ζ_1 - ζ_1^{σ_1})(ζ_2- ζ_2^{σ_2})\right)^{2n^0\cdot n^1}\mr{Res}(\mr{Inv}(β^0), \mr{Inv}(β^1))^2\right\vert_F^{-1}.
\end{equation}
Since $|ζ_i - ζ_i^{σ_i}|_F = |\mr{Disc}_{E_i/F}|_F^{1/2}$, the lemma is proved.
\end{proof}

\begin{proof}[Proof of Prop. \ref{prop:key_degree_computation}.]
Lem. \ref{lem:degree_identities} and Lem. \ref{lem:resultant_occurs} already show that $(q_1^-,q_2^-):Λ_2\dashrightarrow Λ_1^-\times Λ_2^-$ is a quasi-isogeny. From Lem. \ref{lem:degree_identities}, we obtain
\begin{equation}\label{eq:key_trick}
\begin{aligned}
\deg\big((q_1^-&, q_2^-): Λ_2 \dashrightarrow Λ_1^-\times Λ_2^-\big){}^2\\[1mm]
& = \deg(Λ_2\dashrightarrow Λ_1) \deg\big((q_1^-, q_2^-): Λ_1 \dashrightarrow Λ_1^-\times Λ_2^-\big) \deg \big((q_1^-, q_2^-): Λ_2 \dashrightarrow Λ_1^-\times Λ_2^-\big)\\[1mm]
& = \deg(Λ_1\dashrightarrow Λ_2) \deg(q_2^-:Λ_1^+\dashrightarrow Λ_2^-)\deg(q_1^+:Λ_2^-\dashrightarrow Λ_1^+)\\[1mm]
& = \deg(Λ_1\dashrightarrow Λ_2) \deg(q_1^+q_2^-: Λ_1^+\dashrightarrow Λ_1^+).
\end{aligned}
\end{equation}
Substituting \eqref{eq:degree_lambdas} and Lem. \ref{lem:resultant_occurs}, this means that $\deg\big((q_1^-, q_2^-): Λ_2 \dashrightarrow Λ_1^-\times Λ_2^-\big){}^2 $ equals
\begin{equation}
\left|\mathrm{Disc}_{E_1/F}\cdot\mathrm{Disc}_{E_2/F}\right|_F^{-n^0\cdot n^1}\left|\Res\left(\mathrm{Inv}(β^0), \mathrm{Inv}(β^1)\right)\right|_F^{-2}\cdot q^{2n^1\cdot|m^0| - 2n^0 \cdot|m^1|}.
\end{equation}
Taking the square root proves Prop. \ref{prop:key_degree_computation}.
\end{proof}

\begin{proof}[Proof of Propositions \ref{prop:fiber_count_analytic_E03}, \ref{prop:fiber_count_analytic_E12}, \ref{prop:fiber_count_geometric} and Theorems \ref{thm:analytic_reduction}, \ref{thm:main}.] We take up the setting of Lem. \ref{lem:phi_isogeny}. Recall that this lemma concerned the quasi-homomorphism
\begin{equation}\label{eq:recap_pp}
\m{p_2^-}, {\wt{p}_1^-}.: \Hom(T_0, Y_0) \dashrightarrow \Hom_{O_{E_2}-\text{conj}}(T_0, Y_0) \times \Hom_{O_{E_1}-\text{conj}}(T_r, Y_r).
\end{equation}
Composing with the quasi-isogenies $ρ_0^0$, $ρ_0^1$, $ρ_r^0$ and $ρ_r^1$, and applying the Dieudonné module functor $M(-)$ over the special fiber, \eqref{eq:recap_pp} is sent to $(q_1^-, q_2^-):Λ_2\dashrightarrow Λ_1^-\times Λ_2^-$. Thus Prop. \ref{prop:key_degree_computation} shows that \eqref{eq:recap_pp} is a quasi-isogeny and that its degree is given by \eqref{eq:yaoling}. Then Lem. \ref{lem:phi_isogeny} states that $Φ$ is an isogeny. Cearly,
$$\prod_{i = 1}^r \deg(L_i) = q^{2n^0|m^1|}.$$
Combining \eqref{eq:deg_relation} with \eqref{eq:yaoling}, we obtain
\begin{equation}
\deg(Φ) = \left|\mathrm{Disc}_{E_1/F}\cdot\mathrm{Disc}_{E_2/F}\right|_F^{-\frac{n^0\cdot n^1}2}\left|\Res\left(\mathrm{Inv}(β^0), \mathrm{Inv}(β^1)\right)\right|_F^{-1}\cdot q^{n^1\cdot|m^0| + n^0 \cdot|m^1|}.
\end{equation}
This finishes the proof of all our results.
\end{proof}

\section{Appendix I: Local intersection theory}
\label{s:appendix_loc_inter}
This section provides some background on intersection theory on regular local formal schemes. Its first aim is to give a precise definition of cycles, correspondences and the action of the latter on the former. Its second aim is to isolate some cases in which intersection numbers may be computed as lengths. These allow us to work with our non-standard definition of Hecke correspondences in \S3 and \S4 of the paper.

The most important input in the following is Serre's Vanishing Conjecture Thm. \ref{thm:Serre_vanishing}, which is due (independently) to Roberts \cite{Roberts} and Gillet--Soulé \cite{GS}. In fact, we present the theory of cycles (in our context) as a special case of \cite{GS}, which thus becomes our main reference. None of the statements that follow is new.

\subsection{Cycle intersection}

Let $M = \Spf A$ be the formal spectrum of a regular complete local ring of dimension $n$. We write $M^{(c)}$ for the set of integral closed formal subschemes $Z\subseteq M$ of codimension $c$. These are the same as integral closed subschemes of $\Spec A$ and we write $η_Z \in \Spec A$ for the generic point of $Z$. Denote by
\begin{equation}\label{eq:def_cycle_group}
Z^c(M) := \bigoplus_{Z\in M^{(c)}} \mbZ\cdot [Z]
\end{equation}
the group of cycles of codimension $c$. The next definition is the standard way of defining cycles from coherent sheaves: Take the maximal dimensional irreducible components of its support with appropriate multiplicties.
\begin{defn}\label{def:cycle_for_complex}
(1) For a finite type $A$-module $E$ with $\codim \Supp(E) \geq c$, we define
\begin{equation}\label{eq:cycle_for_coherent}
[E]^c := \sum_{Z\in M^{(c)}} \ell_{A_{η_Z}}\left(E_{η_Z}\right)\cdot [Z] \in Z^c(M).
\end{equation}
Here, $\ell$ is our notation for the length of an artinian module, while $A_{η_Z}$ and $E_{η_Z}$ denote the localizations at $η_Z$.\\
(2) Given a bounded complex of finite type $A$-modules $E^\bullet$, we write $\Supp(E^\bullet) := \bigcup_{i\in \mbZ} \Supp(H^i(E^\bullet))$. If $\codim \Supp(E^\bullet) \geq c$, we define
\begin{equation}\label{eq:cycle_for_complex}
[E^\bullet]^c := \sum_{i\in \mbZ} (-1)^i \left[H^i(E^\bullet)\right]^c \in Z^c(M).
\end{equation}
\end{defn}
In fact, every such complex $E^\bullet$ is quasi-isomorphic to a perfect complex because $A$ is regular. (A complex is perfect if it is bounded and if all its terms are finite free.) For $Y\subseteq \Spec A$ closed, let $K_0^Y(M)$ denote the $K$-group of perfect complexes acyclic outside $Y$, cf. \cite[\S1]{GS}. Let $K_0'(Y)$ denote the $K$-group of coherent sheaves with support on $Y$. We write $(E^\bullet)$ (resp. $(E)$) for the class of a perfect complex (resp. coherent sheaf) in these $K$-groups. Then, by \cite[Lem. 1.9]{GS},
\begin{equation}\label{eq:iso_K_group}
\begin{aligned}
K_0^Y(M) & \overset{\iso}{\longrightarrow} K'_0(Y)\\
(E^\bullet) & \longmapsto \sum_{i\in \mbZ} (-1)^i (H^i(E^\bullet)).
\end{aligned}
\end{equation}
It is clear that \eqref{eq:cycle_for_complex} descends to a map $K'_0(Y) \to Z^c(M)$ whenever $\codim Y \geq c$ and that \eqref{eq:cycle_for_coherent} then comes from composition with \eqref{eq:iso_K_group}. There is an associative, commutative cup product
\begin{equation}
\label{eq:cup_K_groups}
\begin{aligned}
\cup:K_0^Y(M)\times K_0^Z(M) & \longrightarrow K_0^{Y\cap Z}(M)\\
(E_1^\bullet), (E_2^\bullet) & \longmapsto (E_1^\bullet \overset{\mathbb{L}}{\tensor}_{\mcO_M} E_2^\bullet).
\end{aligned}
\end{equation}
Every cycle $z\in Z^c(M)$ may be viewed as an element of $K'_0(\Supp z)$ by linear extension from the map $M^{(c)}\ni Z\mapsto \mcO_Z$. Applying the cup product and composing with the cycle map of Def. \ref{def:cycle_for_complex} provides an associative, commutative intersection product of cycles whenever intersections are non-degenerate.
\begin{defn}\label{def:cycle_intersection_product}
Given $Z_1 \in M^{(c_1)}$ and $Z_2\in M^{(c_2)}$ such that $\codim (Z_1\cap Z_2) \geq c_1 + c_2$, put
\begin{equation}\label{eq:derived_intersection}
[Z_1]\cdot [Z_2] := \left[\mcO_{Z_1}\overset{\mathbb{L}}{\tensor}_{\mcO_M} \mcO_{Z_2}\right]^{c_1+c_2} \in Z^{c_1+c_2}(M).
\end{equation}
This is feasible because, as mentioned before, the $\mcO_{Z_i}$ are quasi-isomorphic to perfect complexes. Write $(Z^{c_1}(M)\times Z^{c_2}(M))_{\mr{proper}}$ for the set of those pairs $(z_1,z_2)$ with $\codim (\Supp (z_1) \cap \Supp (z_2)) \geq c_1 + c_2$. Then \eqref{eq:derived_intersection} extends linearly to a map
$$(Z^{c_1}(M)\times Z^{c_2}(M))_{\mr{proper}}\longrightarrow Z^{c_1+c_2}(M).$$
\end{defn}
\begin{rmk}\label{rmk:derived_intersection}
Formula \eqref{eq:derived_intersection} may be written more concretely as
\begin{equation}\label{eq:Serre_intersection}
[Z_1] \cdot [Z_2] := \sum_{Z\in M^{(c_1+c_2)}}\sum_{i\geq 0} (-1)^i\ell_{A_{η_Z}} \left(\Tor_i^{A_{η_Z}}(\mcO_{Z_1,η_Z}, \mcO_{Z_2,η_Z}) \right)\cdot [Z] \in Z^{c_1+c_2}(M).
\end{equation}
\end{rmk}
\begin{thm}[Serre's Vanishing Conjecture, \protect{\cite[Thm. 5.4]{GS}}]\label{thm:Serre_vanishing}
Let $E_1^\bullet$ and $E_2^\bullet$ be two perfect complexes of $A$-modules with $\codim \Supp(E^\bullet_1) + \codim\Supp(E^\bullet_2) > n = \dim A$. Then
$$[E^\bullet_1 \overset{\mathbb{L}}{\tensor} E^\bullet_2]^n = 0.$$
\end{thm}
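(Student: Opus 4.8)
The plan is to translate the statement into the language of $K$-theory with supports from \eqref{eq:iso_K_group}--\eqref{eq:cup_K_groups} and to deduce it from two facts about the filtration of these $K$-groups by codimension of support. Write $Y_i := \Supp(E_i^\bullet)$, so that the hypothesis reads $\codim Y_1 + \codim Y_2 > n$; since $\Supp(E_1^\bullet \overset{\mathbb{L}}{\otimes} E_2^\bullet) = Y_1 \cap Y_2$ and $A$ is local, this intersection is either empty --- in which case the claim is trivial --- or contains $\mathfrak{m}$, and for $[\,\cdot\,]^n$ to be defined it must in fact equal $\{\mathfrak{m}\}$. Under the dévissage isomorphism \eqref{eq:iso_K_group}, the class $(E_i^\bullet) \in K_0^{Y_i}(M)$ lies in the step $\Fil^{\codim Y_i}$ of the codimension filtration; by \eqref{eq:cup_K_groups} the class $(E_1^\bullet \overset{\mathbb{L}}{\otimes} E_2^\bullet) \in K_0^{\{\mathfrak{m}\}}(M)$ is the cup product $(E_1^\bullet) \cup (E_2^\bullet)$, and $[E_1^\bullet \overset{\mathbb{L}}{\otimes} E_2^\bullet]^n$ is its image under the cycle map $K_0^{\{\mathfrak{m}\}}(M) \cong K_0'(\{\mathfrak{m}\}) \cong \mathbb{Z} \to Z^n(M) = \mathbb{Z}\cdot[\mathfrak{m}]$. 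The theorem thus reduces to two assertions: that $\Fil^{n+1} K_0^{\{\mathfrak{m}\}}(M) = 0$, and that the cup product carries $\Fil^a \times \Fil^b$ into $\Fil^{a+b}$.

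The first assertion is immediate, since $K_0^{\{\mathfrak{m}\}}(M) \cong \mathbb{Z}$ is concentrated in filtration degree $n$. The second --- multiplicativity of the codimension filtration --- is the main obstacle, and the point at which I would invoke the Adams operation / local Chern character machinery of Gillet--Soulé (equivalently, Roberts' local Chern characters). One works after $\otimes\,\mathbb{Q}$, where the codimension filtration agrees with the $\gamma$-filtration: the Adams operations $\psi^k$ act diagonalizably on $K_0^Y(M)_{\mathbb{Q}}$ with $\mathrm{gr}^c$ the $k^c$-eigenspace, and one constructs multiplicative local Chern characters $K_0^Y(M)_{\mathbb{Q}} \to \bigoplus_c \mathrm{CH}^c_Y(M)_{\mathbb{Q}}$ that are compatible with the cycle map and with $\overset{\mathbb{L}}{\otimes}$. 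Since Chow groups with supports are graded for the intersection product, a cup product of classes in $\Fil^a$ and $\Fil^b$ lands in $\Fil^{a+b}$; combined with $\Fil^{n+1}K_0^{\{\mathfrak{m}\}}(M) = 0$ and the hypothesis $a+b > n$, this forces $[E_1^\bullet \overset{\mathbb{L}}{\otimes} E_2^\bullet]^n = 0$. Setting up these local Chern characters and checking their multiplicativity is the technical heart of the matter --- it is exactly the content of \cite{GS} --- so in practice one cites it rather than reproving it.

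Finally, I would remark that in the equicharacteristic case, and in the unramified mixed characteristic case, this machinery can be avoided via Serre's reduction to the diagonal: a Cohen presentation exhibits $A$ as a complete-intersection quotient of a regular ring, so $E_1^\bullet \overset{\mathbb{L}}{\otimes}_A E_2^\bullet$ is computed by a Koszul-type resolution, and the vanishing of its Euler characteristic at $\mathfrak{m}$ follows from an elementary dimension count using $\codim Y_1 + \codim Y_2 > n$. The genuinely difficult case is the ramified mixed characteristic one, where the reduction to the diagonal fails and the $K$-theoretic argument above (or arguments using de Jong's alterations) is required.
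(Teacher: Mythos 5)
The paper gives no proof of this theorem: it is imported as an external result of Roberts \cite{Roberts} and Gillet--Soulé \cite{GS}, which is exactly what you do for the technical heart (multiplicativity of the codimension filtration via Adams operations/local Chern characters after tensoring with $\mbQ$). Your surrounding reductions --- interpreting the hypothesis so that $\Supp(E_1^\bullet\overset{\mathbb{L}}{\tensor}E_2^\bullet)\subseteq\{\mathfrak{m}\}$, passing to $K$-groups with supports where $\operatorname{Fil}^{n+1}K_0^{\{\mathfrak{m}\}}(M)=0$, and the remark that outside the ramified mixed-characteristic case Serre's reduction to the diagonal suffices --- are accurate, so your proposal takes essentially the same approach as the paper, namely citing \cite{GS}.
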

\begin{cor}\label{cor:commutativity_taking_cycles}
Let $E_1^\bullet$ and $E_2^\bullet$ be two perfect complexes of $A$-modules with $\codim \Supp(E_i^\bullet)\geq c_i$. Further assume $\codim (\Supp(E_1^\bullet) \cap \Supp(E_2^\bullet)) \geq c_1 + c_2$. Then
\begin{equation}\label{eq:comp_inter_defs}
[E^\bullet_1]^{c_1}\cdot [E^\bullet_2]^{c_2} = [E^\bullet_1 \overset{\mathbb{L}}{\tensor}_{\mcO_M} E^\bullet_2]^{c_1+c_2}.
\end{equation}
\end{cor}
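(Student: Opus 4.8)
The plan is to deduce the identity from Serre's Vanishing Theorem \ref{thm:Serre_vanishing} by a dévissage carried out after localizing at a generic point. Set $Y_i:=\Supp(E_i^\bullet)$. Since $A$ is regular, each $E_i^\bullet$ is quasi-isomorphic to a perfect complex and defines a class $(E_i^\bullet)\in K_0^{Y_i}(M)$; one has $(E_1^\bullet)\cup(E_2^\bullet)=(E_1^\bullet\overset{\mathbb{L}}{\tensor}E_2^\bullet)$ in $K_0^{Y_1\cap Y_2}(M)$, and under \eqref{eq:iso_K_group} the maps $[\cdot]^c$ are induced by the cycle maps $K_0'(-)\to Z^c(M)$. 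For $W\in M^{(a)}$ and a perfect complex $F^\bullet$ with $\codim\Supp F^\bullet\ge a$, write $m_W(F^\bullet):=\ell_{A_{\eta_W}}\big((F^\bullet)_{\eta_W}\big)$ for the multiplicity of $W$ in the cycle $[F^\bullet]^a$, so that $[F^\bullet]^a=\sum_{W\in M^{(a)}}m_W(F^\bullet)[W]$.

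The technical heart is the following claim: if $F^\bullet,G^\bullet$ are perfect complexes with $\codim\Supp F^\bullet\ge a$, $\codim\Supp G^\bullet\ge b$ and $\codim(\Supp F^\bullet\cap\Supp G^\bullet)\ge a+b$, then
\[
[F^\bullet\overset{\mathbb{L}}{\tensor}G^\bullet]^{a+b}\;=\;\sum_{W\in M^{(a)}}m_W(F^\bullet)\,\big[\mcO_W\overset{\mathbb{L}}{\tensor}G^\bullet\big]^{a+b}.
\]
To prove it I would compare multiplicities along an arbitrary $W_0\in M^{(a+b)}$, that is, localize at $\eta:=\eta_{W_0}$; since $A$ is regular, $A_\eta$ is a regular local ring of dimension $a+b$. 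In $K_0'$ of $\Supp F^\bullet$ over $A_\eta$ one has the standard dévissage with respect to the codimension filtration, $(F^\bullet)=\sum_{\codim\mathfrak p=a}m_{V(\mathfrak p)}(F^\bullet)\,(A/\mathfrak p)+\rho$, where $\rho$ is a $\mbZ$-combination of classes $(A/\mathfrak q)$ with $\mathfrak q\in\Supp F^\bullet$ and $\codim\mathfrak q\ge a+1$. Cupping with $(G^\bullet)_\eta$ and applying $[\cdot]^{a+b}$, each $\rho$-term contributes $[(A/\mathfrak q)\overset{\mathbb{L}}{\tensor}(G^\bullet)_\eta]^{a+b}$, which vanishes by Theorem \ref{thm:Serre_vanishing}: it is the degree-$(a+b)$ cycle of a derived tensor product of perfect complexes over $A_\eta$ whose supports have codimensions summing to $\ge(a+1)+b>a+b=\dim A_\eta$. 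What survives is precisely the right-hand side evaluated at $W_0$. This is the only non-formal step of the argument — arranging the strict inequality ``codimension sum $>$ dimension'', which fails over $M$ itself when $c_1+c_2<\dim A$ but holds after localization, so that Serre vanishing applies; the remainder is bookkeeping with the codimension filtration on $K_0'$ and the compatibility of cup products and cycle maps with localization.

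Granting the claim, the corollary follows by applying it twice. With $(F^\bullet,G^\bullet,a,b)=(E_1^\bullet,E_2^\bullet,c_1,c_2)$ it gives
\[
[E_1^\bullet\overset{\mathbb{L}}{\tensor}E_2^\bullet]^{c_1+c_2}=\sum_{W\in M^{(c_1)}}m_W(E_1^\bullet)\,[\mcO_W\overset{\mathbb{L}}{\tensor}E_2^\bullet]^{c_1+c_2}.
\]
For each $W$ with $m_W(E_1^\bullet)\ne 0$ one has $W\subseteq Y_1$, hence $\codim(W\cap Y_2)\ge\codim(Y_1\cap Y_2)\ge c_1+c_2$, and applying the claim again with $(F^\bullet,G^\bullet,a,b)=(E_2^\bullet,\mcO_W,c_2,c_1)$, together with commutativity of $\overset{\mathbb{L}}{\tensor}$, Definition \ref{def:cycle_intersection_product} and bilinearity of the cycle intersection product, gives $[\mcO_W\overset{\mathbb{L}}{\tensor}E_2^\bullet]^{c_1+c_2}=[W]\cdot[E_2^\bullet]^{c_2}$. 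Substituting and using bilinearity once more yields
\[
[E_1^\bullet\overset{\mathbb{L}}{\tensor}E_2^\bullet]^{c_1+c_2}=\Big(\sum_{W\in M^{(c_1)}}m_W(E_1^\bullet)[W]\Big)\cdot[E_2^\bullet]^{c_2}=[E_1^\bullet]^{c_1}\cdot[E_2^\bullet]^{c_2},
\]
which is the assertion. The main obstacle, as noted, is the passage to a setting where Serre vanishing is applicable; once Theorem \ref{thm:Serre_vanishing} is in hand the rest of the proof is purely formal.
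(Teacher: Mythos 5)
Your proof is correct and follows essentially the same route as the paper: both arguments replace the $K_0'$-class of each $E_i^\bullet$ by its associated cycle modulo classes supported in codimension $>c_i$ (your dévissage claim is exactly the paper's decomposition $[E_i^\bullet]^{c_i}=(E_i^\bullet)+\varepsilon_i$), and both kill the error terms by applying Serre's vanishing theorem after localizing at the generic points of codimension $c_1+c_2$. The only difference is organizational: you substitute one factor at a time in two steps, while the paper treats both factors symmetrically at once.
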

\begin{proof}
The left hand side of \eqref{eq:comp_inter_defs} is defined by viewing $[E_1^\bullet]^{c_1}$ and $[E_2^\bullet]^{c_2}$ as elements of $K'_0(\Supp(E_i^\bullet))$ by linear extension of the map $M^{(c)}\ni Z \mapsto \mcO_Z$. In $K'_0(\Supp(E_i^\bullet))$ we may write
$$[E_i^\bullet]^{c_i} = (E_i^\bullet) + ε_i$$
for some coherent sheaf $ε_i$ with $\codim \Supp ε_i > c_i$. Applying Thm. \ref{thm:Serre_vanishing} to the localizations $A_{η_Z}$ for $Z\in M^{(c_1+c_2)}$ we obtain
$$[ε_1 \overset{\mathbb{L}}{\tensor}_{\mcO_M} E_2^\bullet]^{c_1+c_2} = [ E_1^\bullet \overset{\mathbb{L}}{\tensor}_{\mcO_M} ε_2]^{c_1+c_2} = [ε_1 \overset{\mathbb{L}}{\tensor}_{\mcO_M} ε_2]^{c_1+c_2} = 0.$$
\end{proof}

\subsection{Correspondences}
Let $W$ denote a complete DVR. Assume from now on that $M$ is local, formally of finite type, and formally smooth of relative dimension $n-1$ over $\Spf W$. We define an intersection number as follows.
\begin{defn}\label{def:intersection_number}
The degree $\mr{deg}:Z^n(M) \overset{\sim}{\longrightarrow} \mbZ$ is defined as $\deg(\mcO_Z) = \ell_{W}(\mcO_Z)$. Composing with the intersection product \eqref{eq:derived_intersection} defines the intersection number of properly intersecting cycles,
\begin{equation}\label{eq:def_int_number}
(\ ,\ ):(Z^{c}(M) \times Z^{n-c}(M))_{\mr{proper}} \longrightarrow \mbZ,\ \ (z_1,z_2) = \mr{deg}(z_1\cdot z_2).
\end{equation}
\end{defn}
The formal smoothness provides that $M\times_{\Spf W} M$ is again regular and of dimension $2n-1$. Consider the diagram
$$
\xymatrix{
M \times_{\Spf W} M \ar[r]^-{p_2} \ar[d]_{p_1} & M\\
M.
}
$$
There is a pullback of cycles
$$p_2^*:Z^c(M)\longrightarrow Z^c(M \times_{\Spf W} M),\ \ \ [Z]\longmapsto [p_2^{-1}(Z)].$$
There is also a pushforward of cycle whose support is finite (via $p_1$) over $M$,
$$p_{1,*}:Z^c(M\times_{\Spf W} M)_{\text{$p_1$-finite}} \longrightarrow Z^{c-n+1}(M),\ \ \ [Z]\longmapsto \deg(Z\to p_1(Z))[p_1(Z)].$$
These considerations similarly apply to the various projection maps
$$p_{ij}:M\times_{\Spf W} M \times_{\Spf W} M \longrightarrow M\times_{\Spf W} M.$$
\begin{defn}\label{def:correspondences}
(1) Denote by $\mr{Corr}(M)\subseteq Z^{n-1}(M\times_{\Spf W}M)$ all those elements whose support is finite over $M$ via both $p_1$ and $p_2$. Define the bilinear map
\begin{equation}\label{eq:def_ring_str_correspondences}
\begin{aligned}
*:\mr{Corr}(M)\times \mr{Corr}(M) & \longrightarrow \mr{Corr}(M)\\
(C_1, C_2) & \longmapsto C_1*C_2 := p_{13,*}\left(p_{12}^*(C_1)\cdot p_{23}^*(C_2)\right).
\end{aligned}
\end{equation}
Note that the intersection $\Supp p_{12}^*(C_1) \cap \Supp p_{23}^*(C_2)$ is finite over $M$ (via both $p_1$ and $p_3$) and hence has the expected codimension.\\
(2) Define the bilinear map
\begin{equation}\label{eq:def_action_correspondences}
\begin{aligned}
\mr{Corr}(M)\times Z^c(M) & \longrightarrow Z^c(M)\\
(C,z) & \longmapsto C*z := p_{1,*}(C\cdot p_2^*(z)).
\end{aligned}
\end{equation}
Again, the occurring intersection is of the expected codimension and finite over $M$ (via both $p_1$ and $p_2$.)
\end{defn}
Let $R$ be a formal scheme, let $p_1,p_2:R\to M$ be two finite maps and assume that $R$ is pure of dimension $n$. Then also $(p_1,p_2):R\to M\times_{\Spf W}M$ is finite and $R$ defines the correspondence $[R] := [\mcO_R]^{n-1}$.
\begin{prop}\label{prop:correspondences}
(1) Definitions \eqref{eq:def_ring_str_correspondences} and \eqref{eq:def_action_correspondences} make $\mr{Corr}(M)$ into a ring and $Z^c(M)$ into a left $\mr{Corr}(M)$-module.\\
(2) Moreover, if $C_1 = [R_1]$ and $C_2 = [R_2]$ for finite maps $R_i \to M\times_{\Spf W} M$ from purely $n$-dimensional formal schemes $R_i$, then simply
\begin{equation}\label{eq:ident_Hecke}
C_1 * C_2 = [R_1 \times_{p_2,\,M,\,p_1} R_2].
\end{equation}
(3) Similarly, if $C \in \mr{Corr}(M)$ has the form $C = [R]$ and $z = [Z] \in Z^c(M)$, then
\begin{equation}\label{eq:ident_Hecke_action}
C*z = p_{1,*}[R\times_{p_2,M} Z].
\end{equation}
\end{prop}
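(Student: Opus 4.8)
The plan is to realize all the operations in \eqref{eq:def_ring_str_correspondences} and \eqref{eq:def_action_correspondences} on the level of the $K$-groups with supports of the self-products $M^{\times d} := M\times_{\Spf W}\cdots\times_{\Spf W}M$ ($d\le 4$), where they become formal consequences of the associativity and commutativity of the cup product \eqref{eq:cup_K_groups}, and then to descend the resulting identities to cycles by means of Corollary \ref{cor:commutativity_taking_cycles} --- that is, Serre vanishing (Theorem \ref{thm:Serre_vanishing}) --- which guarantees that the stage at which one passes from a perfect complex to its associated cycle is immaterial in the proper-intersection range. Three standard compatibilities are needed, all available in the present setting (everything regular and formally of finite type over the DVR $W$) and contained in \cite{GS}: (a) flat pullback --- in particular along each projection $p_{ij}$, which is formally smooth hence flat --- is a ring map for $\cup$ and commutes with taking cycles; (b) for a finite morphism $g$, the pushforward on $K'_0$ is $[\mcF]\mapsto[g_*\mcF]$, satisfies the projection formula $g_*(g^*x\cup y)=x\cup g_*y$, and induces on cycles of the correct codimension the geometric pushforward $[Z]\mapsto\deg(Z\to p(Z))[p(Z)]$ (there are no higher direct images, and the length at a generic point of the image is the degree times the length upstairs); (c) base change holds in a Cartesian square with one pair of opposite arrows finite.

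For part (1), given $C_1,C_2,C_3\in\mr{Corr}(M)$, one checks by a diagram chase with (b) and (c) that both $(C_1*C_2)*C_3$ and $C_1*(C_2*C_3)$ equal the single ``total composition''
$$p_{14,*}\bigl(p_{12}^*(C_1)\cdot p_{23}^*(C_2)\cdot p_{34}^*(C_3)\bigr)$$
on $M^{\times 4}$. The finiteness over $M$ built into $\mr{Corr}(M)$ forces every support occurring in the intermediate products on $M^{\times 3}$ and $M^{\times 4}$ to be finite over an outer copy of $M$, hence of the expected codimension, so Corollary \ref{cor:commutativity_taking_cycles} applies and the interior cycle-taking may be suppressed; associativity of $\cup$ then closes the argument. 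The unit is the class $[\Delta]$ of the diagonal $\Delta\colon M\to M\times_{\Spf W}M$, for which $[\Delta]*C=C=C*[\Delta]$ follows from base change along $\Delta$ (a regular immersion, since $M/W$ is smooth). The module assertion for $Z^c(M)$ is proved identically, one of the factors being replaced by a cycle.

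For parts (2) and (3), write $M^{\times 3}$ with coordinates $(x,y,z)$ and $R_1\subseteq M_x\times_W M_y$, $R_2\subseteq M_y\times_W M_z$. The preimages $p_{12}^{-1}(R_1)$ and $p_{23}^{-1}(R_2)$ have codimension $n-1$ and meet in $R_1\times_{p_2,M,p_1}R_2$, which is finite over $R_1$ (hence over $M$), so of codimension at least $2n-2$; thus the intersection is proper. Because the two pullbacks leave the $z$- resp.\ $x$-direction free, a base-change computation identifies $p_{12}^*\mcO_{R_1}\overset{\mathbb{L}}{\otimes}_{\mcO_{M^{\times 3}}}p_{23}^*\mcO_{R_2}$ with $\mcO_{R_1}\overset{\mathbb{L}}{\otimes}_{\mcO_{M_y}}\mcO_{R_2}$, and Corollary \ref{cor:commutativity_taking_cycles} together with (a) gives $p_{12}^*[R_1]\cdot p_{23}^*[R_2]=\bigl[\mcO_{R_1}\overset{\mathbb{L}}{\otimes}_{\mcO_{M_y}}\mcO_{R_2}\bigr]^{2n-2}$. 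Now each $n$-dimensional component of $R_1\times_M R_2$ dominates the irreducible $M$, so at its generic point the higher $\Tor$'s over $\mcO_{M_y}$ are computed after localizing $\mcO_{M_y}$ at its generic point --- a field --- and hence vanish; therefore $\bigl[\mcO_{R_1}\overset{\mathbb{L}}{\otimes}_{\mcO_{M_y}}\mcO_{R_2}\bigr]^{2n-2}=[\mcO_{R_1\times_M R_2}]^{2n-2}$. Pushing forward along the finite map $p_{13}$ via (b) yields $C_1*C_2=[R_1\times_{p_2,M,p_1}R_2]^{n-1}$; part (3) is the same, with $R_2$ replaced by the cycle $Z$.

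The only deep ingredient is Serre vanishing; the real work is the bookkeeping of dimensions at each cup product and each pushforward, and tracking, over the triple and quadruple products, which copy of $M$ a given support is finite over. The one genuinely delicate point --- that in (2) and (3) the derived fiber product may be replaced by the ordinary one --- rests precisely on the finiteness-over-$M$ hypothesis, which forces the expected-codimension components to dominate $M$ so that no excess higher-$\Tor$ contributions survive.
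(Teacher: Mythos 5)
Your proposal is correct and follows essentially the same route as the paper: associativity is reduced via the projection formula to the total composition $p_{14,*}\bigl(p_{12}^*C_1\cdot p_{23}^*C_2\cdot p_{34}^*C_3\bigr)$ on $M^{\times 4}$ with Serre vanishing (Cor.~\ref{cor:commutativity_taking_cycles}) handling the intermediate cycle-taking, and for (2)--(3) the derived tensor product over $M^{\times 3}$ is identified with the one over $M$, after which the higher Tor contributions are discarded because the relevant components dominate $M$ (the paper phrases this as generic flatness of $R_i\to M$, you as vanishing of Tor after localizing at the generic point of $M$ --- the same argument). Your additional treatment of the unit $[\Delta]$ is a harmless supplement to a point the paper leaves implicit.
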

\begin{proof}
Statement (1) claims associativity of \eqref{eq:def_ring_str_correspondences} and \eqref{eq:def_action_correspondences} Using the projection formula \cite[(1.7.2)]{GS}, this reduces to associativity of the cycle intersection for
$$p_{12}^*C_1\cdot p_{23}^*C_2\cdot p_{34}^*C_3,\ \ \ \mr{resp.}\ \ \ p_{12}^*C_1\cdot p_{23}^*C_2 \cdot p_3^*z$$
on $M^4$ resp. $M^3$, where $C_i\in \mr{Corr}(M)$ and $z\in Z^c(M)$. For example, the first expression may be rewritten as
$$\begin{aligned}
p_{14,*}(p_{12}^*C_1\cdot p_{23}^*C_2\cdot p_{34}^*C_3) & = p_{13,*}p_{124,*}(p_{124}^*(p_{12}^*C_1) \cdot p_{23}^*C_2\cdot p_{34}^*C_3) \\
& = p_{13,*}(p_{12}^*C_1\cdot p_{124,*}(p_{23}^*C_2 \cdot p_{34}^*C_3)) \\
& = p_{13,*}(p_{12}^*C_1 \cdot p_{23}^*(C_2*C_3)) \\
& = C_1 * (C_2*C_3).
\end{aligned}$$
We omit further details.

Let now $R_1,R_2 \to M\times_{\Spf W} M$ be as in Statement (2). By definitions of the two sides in \eqref{eq:ident_Hecke}, we need to see the equality
\begin{equation}\label{eq:int_identity_Hecke_corrs}
p_{13,*}[\mcO_{p_{12}^{-1}(R_1)} \overset{\mathbb{L}}{\tensor}_{\mcO_{M\times M\times M}} \mcO_{p_{23}^{-1}(R_2)}]^{2n-2} = p_{13,*}[\mcO_{R_1} \tensor_{p_2^*, \ \mcO_M,\ p_1^*} \mcO_{R_2}]^{2n-2}.
\end{equation}
In fact, we claim this before applying $p_{13,*}$ and we show the slightly stronger statement that all higher Tor-terms on the left hand side of \eqref{eq:int_identity_Hecke_corrs} have support in codimension $\geq 2n-1$. To see this, replace both $\mcO_{R_i}$ by perfect complexes $E_i^\bullet$ on $M\times_{\Spf W} M$. Then there is an isomorphism of the following kind, which may be checked term by term,
$$(E^\bullet_1 \tensor_W A) \tensor_{A \tensor_WA \tensor_WA} (A \tensor_W E^\bullet_2) \iso (E^\bullet_1 \tensor_{p_2^*,\,A,\,p_1^*}E^\bullet_2).$$
The maps $p_1^*, p_2^*:A \to A\tensor_W A$ are flat, so the $E_i^\bullet$ provide flat resolutions of $\mcO_{R_i}$ as $A$-modules. Thus we find that
\begin{equation}\label{eq:tor_computation}
\mcO_{p_{12}^{-1}R_1} \overset{\mathbb{L}}{\tensor}_{\mcO_{M\times M\times M}} \mcO_{p_{23}^{-1}R_2} \iso \mcO_{R_1}\overset{\mathbb{L}}{\tensor}_{p_2^*,\, \mcO_M,\, p_1^*} \mcO_{R_2}.
\end{equation}
But $A$ is an integral domain, so the various projection maps $R_1,R_2\to M$ are generically flat. Thus the higher Tor-terms on the right hand side of \eqref{eq:tor_computation} have support in dimension $< \dim M$ as claimed, proving (2). The proof of (3) is identical and omitted.
\end{proof}

We finally recall that intersection numbers may often be computed as lengths. This relies on the following fact about Cohen--Macaulay rings.
\begin{lem}[\protect{\cite[Tag 02JN]{Stacks}}]\label{lem:Cohen_Macaulay_simple}
Let $R$ be a Cohen--Macaulay noetherian local ring. A sequence of elements $x_1,\ldots,x_r\in R$ is regular if and only if $\dim (R/(x_1,\ldots,x_r)) = \dim(R) - r.$
\end{lem}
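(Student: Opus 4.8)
The statement is \cite[Tag 02JN]{Stacks}, and we only recall the shape of the argument. One implication holds in an arbitrary Noetherian local ring $B$ and makes no use of the Cohen--Macaulay hypothesis: if $x_1,\dots,x_r\in\mathfrak m$ is a regular sequence, then $\dim(B/(x_1,\dots,x_r))=\dim B-r$. I would prove this by induction on $r$. A nonzerodivisor $x_1\in\mathfrak m$ avoids every associated prime of $B$, in particular every minimal prime, so $\dim(B/(x_1))\le\dim B-1$; the reverse inequality is Krull's Hauptidealsatz; and $x_2,\dots,x_r$ is a regular sequence on $B/(x_1)$, to which the inductive hypothesis applies. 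Observe that the numerical equality in the statement already forces $(x_1,\dots,x_r)$ to be a proper ideal, hence all $x_i\in\mathfrak m$, so that this reduction is legitimate and the degenerate case of a unit among the $x_i$ does not occur.

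For the converse — the only point at which Cohen--Macaulayness enters — write $d=\dim B$ and suppose $\dim(B/(x_1,\dots,x_r))=d-r$. The maximal ideal of $B/(x_1,\dots,x_r)$ is then generated up to radical by $d-r$ elements; lifting these to $x_{r+1},\dots,x_d\in\mathfrak m$ yields a system of parameters $x_1,\dots,x_d$ of $B$. The key input is the classical fact that a system of parameters in a Cohen--Macaulay local ring is a regular sequence; granting it, $x_1,\dots,x_d$ is a regular sequence and hence so is its initial segment $x_1,\dots,x_r$ (using also $B/(x_1,\dots,x_r)\ne 0$, which holds since its dimension $d-r$ is nonnegative).

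The content thus lies entirely in that cited fact about systems of parameters, which is where the real work is. To prove it without invoking \cite{Stacks} one would again induct on $d$, using the structure theory of Cohen--Macaulay local rings: such a ring is unmixed, i.e.\ equidimensional with no embedded primes, so an element that is part of a system of parameters decreases the dimension by one, lies in no minimal prime, hence in no associated prime, and is therefore a nonzerodivisor; moreover $B/(x_1)$ is again Cohen--Macaulay, of dimension $d-1$, with $x_2,\dots,x_d$ a system of parameters, so the induction closes. Since all of this is developed in detail in \cite{Stacks}, we are content to cite it.
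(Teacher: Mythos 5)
Your argument is correct. Note, however, that the paper offers no proof of this lemma at all: it is quoted verbatim from the cited source (Stacks Project, Tag 02JN), so there is no in-paper argument to compare against. Your sketch is the standard one and matches the classical route: the ``only if'' direction by induction via Krull's Hauptidealsatz together with the fact that a nonzerodivisor avoids all (associated, hence all minimal) primes, and the ``if'' direction by completing $x_1,\ldots,x_r$ to a system of parameters and invoking the fact that a system of parameters in a Cohen--Macaulay local ring is a regular sequence; the cited reference organizes the induction slightly differently (treating one element at a time, using that $B/(x_1)$ is again Cohen--Macaulay of dimension one less), but the content is the same and your handling of the degenerate cases (properness of the ideal, nonvanishing of the quotient) is adequate.
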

\begin{cor}\label{cor:cycle_resolution_intersection_number}
(1) Let $Z_1, Z_2\subseteq M$ be closed formal subschemes. Assume that $Z_1$ is Cohen--Macaulay and that $Z_2$ is defined by a regular sequence. Assume further that $\codim Z_1 + \codim Z_2 = n$ and that $Z_1\cap Z_2$ is artinian. Then
$$([Z_1], [Z_2]) = \ell_W(\mcO_{Z_1 \cap Z_2}).$$

(2) Assume that $R$ is Cohen--Macaulay of pure dimension $n$ and that $p_1,p_2:R\to M$ are two finite maps. Let $Z_1,Z_2\subseteq M$ be closed formal subschemes that are defined by regular sequences and such that $\codim Z_1 + \codim Z_2 = n$. Assume further that $R\times_{M\times_{\Spf W} M}(Z_1\times_{\Spf W} Z_2)$ is artinian. Then
$$([Z_1], [R]*[Z_2]) = \ell_W\left(R\times_{M\times_{\Spf W} M}(Z_1\times_{\Spf W} Z_2)\right).$$
\end{cor}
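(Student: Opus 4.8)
The plan is to deduce both statements from a single mechanism: under the stated hypotheses the relevant derived tensor product is computed by one Koszul complex, and that complex turns out to be a resolution concentrated in degree $0$. Two elementary facts will be used throughout. First, for a finite $A$-module $E$ with $\dim\Supp E = 0$ one has $\deg([E]^n) = \ell_W(E)$; indeed both sides are additive in short exact sequences, and the identity is Def.~\ref{def:intersection_number} itself when $E = \mcO_Z$ with $Z$ of codimension $n$, so the general case follows by dévissage. Second, in a Noetherian local ring every permutation, and every initial segment, of a regular sequence is again a regular sequence.

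For part~(1), write $c_i = \codim Z_i$, so $c_1 + c_2 = n$ and $\dim\mcO_{Z_1} = c_2$. Pick a regular sequence $x_1,\dots,x_{c_2}$ in $A$ with $Z_2 = V(x_1,\dots,x_{c_2})$, so that the Koszul complex $K_\bullet(x_1,\dots,x_{c_2})$ resolves $\mcO_{Z_2}$ over $\mcO_M$ and hence
$$\mcO_{Z_1}\overset{\mathbb{L}}{\tensor}_{\mcO_M}\mcO_{Z_2} \;\simeq\; K_\bullet(x_1,\dots,x_{c_2};\mcO_{Z_1}).$$
Since $Z_1 \cap Z_2$ is artinian, $\dim(\mcO_{Z_1}/(x_1,\dots,x_{c_2})) = 0 = \dim\mcO_{Z_1} - c_2$, so Lem.~\ref{lem:Cohen_Macaulay_simple} together with the Cohen--Macaulayness of $Z_1$ shows that $x_1,\dots,x_{c_2}$ is a regular sequence on $\mcO_{Z_1}$. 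Therefore the Koszul complex above is a resolution of $\mcO_{Z_1\cap Z_2}$ in degree $0$, whence $[Z_1]\cdot[Z_2] = [\mcO_{Z_1\cap Z_2}]^n$ by Def.~\ref{def:cycle_intersection_product}; applying $\deg$ gives the assertion.

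For part~(2), I would first use Def.~\ref{def:correspondences} and the projection formula \cite[(1.7.2)]{GS} to rewrite the intersection number as a degree on $M\times_{\Spf W}M$,
$$([Z_1],[R]*[Z_2]) \;=\; \deg\bigl(p_{1,*}\bigl(p_1^*[Z_1]\cdot[R]\cdot p_2^*[Z_2]\bigr)\bigr) \;=\; \deg\bigl(p_1^*[Z_1]\cdot[R]\cdot p_2^*[Z_2]\bigr),$$
using that finite pushforward preserves degrees of $0$-cycles. Write $Z_i = V(\underline{x}^{(i)})$ with $\underline{x}^{(i)}$ a regular sequence of length $c_i$. Flatness of $p_1$ and $p_2$ makes $p_1^{-1}Z_1$ and $p_2^{-1}Z_2$ regularly embedded in the regular ring $\mcO_{M\times_{\Spf W}M}$, and $\mcO_R$ is (quasi-isomorphic to) a perfect complex there by regularity; consequently $\mcO_{p_1^{-1}Z_1}\overset{\mathbb{L}}{\tensor}\mcO_R\overset{\mathbb{L}}{\tensor}\mcO_{p_2^{-1}Z_2}$ is represented by $K_\bullet\bigl(p_1^*\underline{x}^{(1)},p_2^*\underline{x}^{(2)};\mcO_R\bigr)$. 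The structure sheaf of $R\times_{M\times_{\Spf W}M}(Z_1\times_{\Spf W}Z_2)$ is $\mcO_R/(p_1^*\underline{x}^{(1)},p_2^*\underline{x}^{(2)})$, which is artinian by hypothesis, so $\dim\bigl(\mcO_R/(p_1^*\underline{x}^{(1)},p_2^*\underline{x}^{(2)})\bigr) = 0 = \dim\mcO_R - n$; by Lem.~\ref{lem:Cohen_Macaulay_simple} the concatenated sequence — hence every permutation of it — is regular on $\mcO_R$. This at once makes the Koszul complex a resolution of $\mcO_{R\times_{M\times_{\Spf W}M}(Z_1\times_{\Spf W}Z_2)}$ in degree $0$ and verifies the codimension hypotheses needed to apply Cor.~\ref{cor:commutativity_taking_cycles} at each intermediate step. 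Two applications of Cor.~\ref{cor:commutativity_taking_cycles} then identify $p_1^*[Z_1]\cdot[R]\cdot p_2^*[Z_2]$ with $[\mcO_{R\times_{M\times_{\Spf W}M}(Z_1\times_{\Spf W}Z_2)}]^{2n-1}$, and taking $\deg$ finishes the proof.

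The step I expect to require the most care — as opposed to any genuinely new idea — is the codimension bookkeeping in part~(2): one must be sure that $R$, $p_1^{-1}Z_1$ and $p_2^{-1}Z_2$ intersect (pairwise and all together) in the expected codimension, since otherwise neither the projection-formula manipulation nor the applications of Cor.~\ref{cor:commutativity_taking_cycles} are legitimate. As indicated, this is exactly what the Cohen--Macaulay hypotheses deliver through Lem.~\ref{lem:Cohen_Macaulay_simple}, once combined with the fact that initial segments and permutations of a regular sequence in a local ring remain regular.
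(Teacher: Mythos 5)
Your proof is correct and follows essentially the same route as the paper: Koszul resolutions plus Lem.~\ref{lem:Cohen_Macaulay_simple} to convert the artinian hypothesis into regularity of the relevant sequence on the Cohen--Macaulay object (hence vanishing of higher Tor), together with the projection formula to reduce (2) to a triple product on $M\times_{\Spf W} M$. The only cosmetic difference is that in (2) the paper first combines $p_1^*[Z_1]\cdot p_2^*[Z_2]=[Z_1\times_{\Spf W}Z_2]^n$ by Tor-independence of the two pullbacks and then applies part (1) on $M\times_{\Spf W} M$ with $R$ in the role of the Cohen--Macaulay subscheme, whereas you run the Koszul computation with coefficients in $\mcO_R$ for the concatenated sequence directly --- the same argument, packaged differently.
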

\begin{proof}
(1) This is standard. Assume  $Z_2 = V(x_1,\ldots,x_c)$ for a regular sequence $x_1,\ldots,x_c$. By Lem. \ref{cor:cycle_resolution_intersection_number} and our dimension assumptions, $x_1\vert_{Z_1},\ldots,x_c\vert_{Z_1}$ define a regular sequence on $Z_1$. Resolving $\mcO_{Z_2}$ by the Koszul complex of $(x_1,\ldots,x_1)$ shows that $\Tor_i^{\mcO_M}(\mcO_{Z_1}, \mcO_{Z_2}) = 0$ for $i > 0$. Thus $\mcO_{Z_1\cap Z_2}$ represents $\mcO_{Z_1}\tensor^{\mathbb{L}}_{\mcO_M} \mcO_{Z_2}$. By Cor. \ref{cor:commutativity_taking_cycles}, $\ell_W(\mcO_{Z_1 \cap Z_2})$ then agrees with $([Z_1], [Z_2])$.

(2) By the projection formula \cite[(1.7.2)]{GS} or by Prop. \ref{prop:correspondences},
$$([Z_1], [R]*[Z_2]) = \deg([R] \cdot p_1^*[Z_1] \cdot p_2^*[Z_2]).$$
Cor. \ref{cor:commutativity_taking_cycles} provides the first equality in
$$p_1^*[Z_1] \cdot p_2^*[Z_2] = [\mcO_{p_1^{-1}(Z_1)} \overset{\mathbb{L}}{\tensor}_{\mcO_{M\times M}} \mcO_{p_2^{-1}(Z_2)}]^n = [Z_1\times_{\Spf W} Z_2]^n.$$
The second equality follows because $p_1^{-1}(Z_1)$ and $p_2^{-1}(Z_2)$ are Tor-independent. Now we may apply Statement (1) for the formal scheme $M\times_{\Spf W} M$, finishing the proof.
\end{proof}

\section{Appendix II: Partial Satake transformation for $GL_n$}
\label{s:appendix_Satake}

\subsection{Partial Satake transformation}

Let $G = GL_n$ and let $P=LU\subset G$ be a parabolic with Levi factor $L$ and unipotent radical $U$. Let $\delta_{L}$ be the modular character of $P$, meaning that $d(lul^{-1}) = \delta_{L}(l) du$ for $u\in U$, $l\in L$ and every left Haar measure $du$ on $U$. Up to conjugacy, the situation is that of a standard parabolic: Let $n_\bullet=(n_1,\cdots,n_k)$ be a partition of $n$ and let $P_{n_\bullet}$ be the block upper triangular matrices with block size given by $n_\bullet$. Let $L_{n_\bullet}$ be the Levi of block-diagonal matrices and let $U_{n_\bullet}$ be the unipotent radical. For $l=(l_1,l_2,\cdots,l_k)\in L_{n_\bullet}$ with $l_i\in\GL_{n_i}(F)$, the modular character is given by
\begin{equation}\label{eq:modular_character}
\delta_{L_{n_\bullet}}(l)=\prod_{i=1}^k |\det l_i|_F^{-n_1-\cdots-n_{i-1}+n_{i+1}+\cdots+n_k}.
\end{equation}
The $\mathbb C$-coefficient (partial) Satake transformation is the following map of spherical Hecke algebras 
\begin{equation}\label{eq:part_Satake}
\begin{aligned}
\mathcal S:\mathbb C\left[G(O_F)\backslash G(F)/G(O_F)\right]
\longrightarrow 
\mathbb C\left[L(O_F)\backslash L(F)/L(O_F)\right]\\
f \longmapsto \mathcal Sf;\quad \mathcal Sf(l):=\delta_{L}(l)^{\frac12}
\int_{U(F)}f\left(lu\right)du
\end{aligned}
\end{equation}
where $du$ is the Haar measure normalized by $U(O_F)$. 
\begin{lem}\label{lem:Satake_ring_hom}
The partial Satake transformation \eqref{eq:part_Satake} is a map of $\mbC$-algebras.
\end{lem}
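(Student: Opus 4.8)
The plan is to unwind the convolution products on both sides of the desired identity $\mcS(f_1 * f_2) = \mcS(f_1) * \mcS(f_2)$ and to reduce it to a change of variables in the Iwasawa decomposition $G(F) = L(F)U(F)\cdot G(O_F)$. Normalize all Haar measures so that $G(O_F)$, $L(O_F)$ and $U(O_F)$ have volume $1$. Since $P(F)\cap G(O_F) = P(O_F) = L(O_F)U(O_F)$, this yields the integration formula
\[
\int_{G(F)}\varphi(g)\,dg \;=\; \int_{L(F)}\int_{U(F)}\int_{G(O_F)}\varphi(muk)\,dk\,du\,dm,
\]
with no modular factor in this variable order and with overall constant $1$ (checked by taking $\varphi = \mathbf 1_{G(O_F)}$ and using that $muk\in G(O_F)$ forces $m\in L(O_F)$, $u\in U(O_F)$).

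For the computation, fix $f_1,f_2$ in the spherical Hecke algebra of $G(F)$ and $m\in L(F)$, and write $(f_1 * f_2)(mu) = \int_{G(F)} f_1(h)\,f_2(h^{-1}mu)\,dh$. Substituting $h = m_1 u_1 k$ and using right-$G(O_F)$-invariance of $f_1$ together with left-$G(O_F)$-invariance of $f_2$ removes the variable $k$ (whose integral then contributes $\operatorname{vol}(G(O_F))=1$), so that
\[
\mcS(f_1 * f_2)(m) \;=\; \delta_L(m)^{1/2}\int_{U(F)}\int_{L(F)}\int_{U(F)} f_1(m_1 u_1)\,f_2\!\bigl(u_1^{-1}m_1^{-1}m u\bigr)\,du_1\,dm_1\,du.
\]
Next, carry out (by Fubini, the integrands being compactly supported) the integration over $u$ with $m_1,u_1$ fixed: writing $u_1^{-1}m_1^{-1}m u = (m_1^{-1}m)\,v$ with $v = \bigl((m_1^{-1}m)^{-1}u_1^{-1}(m_1^{-1}m)\bigr)u$, the bracketed factor lies in $U(F)$ because $U$ is normal in $P$, so $u\mapsto v$ is a left translation of $U(F)$ and $\int_{U(F)} f_2(u_1^{-1}m_1^{-1}m u)\,du = \int_{U(F)} f_2(m_1^{-1}m v)\,dv$, which is independent of $u_1$. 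The triple integral therefore factors, and
\[
\mcS(f_1 * f_2)(m) \;=\; \delta_L(m)^{1/2}\int_{L(F)}\Bigl(\int_{U(F)} f_1(m_1 u_1)\,du_1\Bigr)\Bigl(\int_{U(F)} f_2(m_1^{-1}m v)\,dv\Bigr)dm_1.
\]
On the other hand, expanding $\mcS(f_1)*\mcS(f_2)(m) = \int_{L(F)} \mcS(f_1)(m_1)\,\mcS(f_2)(m_1^{-1}m)\,dm_1$ and using that $\delta_L$ is a character, whence $\delta_L(m_1)^{1/2}\delta_L(m_1^{-1}m)^{1/2} = \delta_L(m)^{1/2}$, gives exactly the same expression. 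Since $\mcS$ is visibly $\mbC$-linear, this proves it is a ring homomorphism.

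The substance of the argument is the measure bookkeeping, not any conceptual difficulty. The points requiring care are: (i) the Iwasawa integration formula with the correct modular twist, which is trivial \emph{only} because of the order $muk$ — with the order $umk$ a factor $\delta_L^{-1}$ appears and must be absorbed into the $\delta_L^{1/2}$-factors — together with the check that the proportionality constant is $1$, which rests on $P(F)\cap G(O_F)=P(O_F)$; (ii) convergence of all integrals, which follows from compact support of $f_1,f_2$ and the fact that $m\,U(F)\cap\operatorname{supp}(f_i)$ is compact for each $m$, so that $\mcS f_i$ is indeed a well-defined element of the spherical Hecke algebra of $L(F)$; and (iii) checking that the substitution in the middle step is a translation of $U(F)$ rather than a conjugation, which is precisely the normality of $U$ in $P$. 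Alternatively one could deduce the lemma from transitivity of the partial Satake transform together with the Satake isomorphism, but the direct computation above is shorter and self-contained.
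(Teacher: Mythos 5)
Your proof is correct and follows essentially the same route as the paper's: both unwind $\mcS(f_1*f_2)$ via the Iwasawa decomposition (you via the integration formula $dg = dm\,du\,dk$ with the $k$-integral absorbed by $G(O_F)$-invariance, the paper via the $P$-equivariant identification $P(F)/P(O_F)\cong G(F)/G(O_F)$ with left Haar measure $dl_2\,du_2$), then separate the $U$-integrals by a translation change of variables resting on normality of $U$ in $P$, and finish with $\delta_L(m_1)^{1/2}\delta_L(m_1^{-1}m)^{1/2}=\delta_L(m)^{1/2}$. Your extra care about the measure normalization, the order $muk$ versus $umk$, and convergence is sound and only makes explicit what the paper leaves implicit.
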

\begin{proof}
The integral
$$
\mathcal S[f*g](l)=\delta_L(l)^{\frac12}\cdot \int_{U(F)}\int_{G(F)} f(y^{-1}lu)g(y)dydu
$$
is invariant under $y\mapsto yk$ for $k\in G(O_F)$. The Iwasawa decomposition provides a $P$-equivariant isomorphism
$$
P(F)/P(O_F)\cong G(F)/G(O_F).$$
The measure on the two spaces here is given by the left invariant Haar measure $dl\,du$ on $P = LU$. Therefore
$$
\mathcal S[f*g](l)=\delta_L(l_2^{-1}l)^{\frac12}\delta_L(l_2)^{\frac12}\int_{U(F)}\int_{U(F)}\int_{L(F)} f(u_2^{-1}l_2^{-1}lu)g(l_2u_2)dudu_2d l_2.
$$
Applying the change of variables $u\longmapsto l^{-1}l_2u_2l_2^{-1}lu$, we conclude that 
$$\mathcal S[f*g](l)=\int_{L(F)}\left(\underbrace{\delta_L(l_2^{-1}l)^{\frac12}\int_U f(l_2^{-1}lu)\;du}_{\mathcal S[f](l_2^{-1}l)}\cdot \underbrace{\delta_L(l_2)^{\frac12}\int_U g(l_2u_2)\;du_2}_{\mathcal S[g](l_2)}\right)d l_2
=(\mathcal S[f]*\mathcal S[g])(l)$$
as desired.\end{proof}
We next specialize to the usual Satake transformation, which is the case of the partition $n=1+1+\cdots+1$. We write $T$ for the diagonal torus and $U$ for the unipotent radical, now consisting of upper triangular unipotent matrices. Since $T$ is abelian, we have $T(O_F)\backslash T(F)/T(O_F)=T(F)/T(O_F)$.

Denote by $x_i: T\longrightarrow \{0,1\}$ the characteristic function of the subset 
$$O_F^\times\times\cdots O_F^\times\times\underbrace{\pi O_F^\times}_{i\text{-th component}}\times  O_F^\times\cdots\times O_F^\times\subset T.$$
Then each $x_i$ is invertible in $\mbC[T(F)/T(O_F)]$ and there is the explicit description
\begin{equation}\label{polynomials}
\mathbb C\left[T(F)/T(O_F)\right]= \mathbb C[x^{\pm}_1,\cdots, x^{\pm}_n].
\end{equation}
The monomial $x_1^{m_1}\cdots x_n^{m_n}$ on the right hand side is simply the characteristic fuction of the subset $\pi^{m_1} O_F^\times\times \pi^{m_2} O_F^\times \times\cdots\times \pi^{m_n} O_F^\times$.

Put $\mathrm{Mat}_{n\times n}^{\det\neq 0}(O_F):=\mathrm{Mat}_{n\times n}(O_F)\cap \GL_n(F)$. Let $W=N(T)/T$ be the Weyl group. For any two $ O_F$-modules $\Lambda_1$ and $\Lambda_2$, we write $\Lambda_1\underset k\subset \Lambda_2$ to denote a (not necessarily strict) inclusion $\Lambda_1\subseteq \Lambda_2$ with $|\Lambda_2/\Lambda_1|=q^k$, where $q=|O_F/\pi O_F|$.
\begin{defn}\label{operators}
For any $0 \leq k\leq n$, resp. for $0 \leq k$, define the following functions on $G(F)$:
\begin{equation}\label{eq:def_generators}
\mathbf S_k(g)=\begin{cases}
1& \text{if } \pi O_F^n\underset{n-k}\subset g O_F^n\underset k\subset  O_F^n;\\
0& \text{otherwise},
\end{cases}
\qquad\ \ 
\mathbf T_k(g)=\begin{cases}
1& \text{if } g O_F^n\underset k\subset  O_F^n; \\
0& \text{otherwise}.
\end{cases}
\end{equation}
For any $-n\leq k<0$, resp. for $k<0$, put $\mathbf S_k(g):=\mathbf S_{-k}(g^{-1})$ and $\mathbf T_k(g):=\mathbf T_{-k}(g^{-1})$. In particular $\mathbf S_n$, $\mathbf S_0$ and $\mathbf S_{-n}$ are the characteristic functions of $\pi G(O_F)$, $G(O_F)$ and $\pi^{-1}G(O_F)$, respectively. For example, $(\mathbf S_{-n}*f)(g) = f(πg)$ for every Hecke function $f$.
\end{defn}

Our first result in this appendix is the following theorem.
\begin{thm}\label{mubiao}
Let $G^{(k)}$ be the preimage of $\pi^k O_F^\times$ under the map $\det: \mathrm{Mat}_{n\times n}(O_F)\longrightarrow O_F$ and let
\begin{equation}\label{H}
\mathcal H^{(k)}:=\mathbb C\left[G(O_F)\backslash G^{(k)}/G(O_F)\right], \qquad
\mathcal H:=\bigoplus_{k=0}^\infty \mathcal H^{(k)}.
\end{equation}
The following statements hold.
\begin{enumerate}[wide, labelindent=0pt, labelwidth=!, label=(\arabic*), topsep=2pt, itemsep=2pt]
\item The Satake transformation defines an isomorphism from $\mathcal H^{(k)}$ to the vector space of degree $k$ homogeneous symmetric polynomials, and consequently from $\mathcal H$ to the algebra of symmetric polynomials
\begin{equation}\label{eq:Satake_usual}
\mathcal S : \mathcal H^{(k)}\overset\cong\longrightarrow \mathbb C[x_1,\cdots,x_n]^{W=\mathrm{id},\ \deg=k};\qquad\ \ 
\mathcal S : \mathcal H\overset\cong\longrightarrow \mathbb C[x_1,\cdots,x_n]^{W=\mathrm{id}}.
\end{equation}
\item Either set $\{\mathbf S_i\}_{0\leq i\leq n}$ or $\{\mathbf T_i\}_{0\leq i\leq n}$ generates $\mathcal H$ as a $\mathbb C$-algebra.
\item Either set $\{\mathbf S_{-n}\}\cup \{\mathbf S_i\}_{0\leq i\leq n}$ or $\{\mathbf S_{-n}\}\cup \{\mathbf T_i\}_{0\leq i\leq n}$ generates $\mathbb C[G(O_F)\backslash G(F)/G(O_F)]$ as a $\mathbb C$-algebra.
\end{enumerate}
\end{thm}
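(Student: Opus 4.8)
\emph{Part (1).} The plan is to combine the Cartan decomposition with the classical Satake isomorphism. Elementary divisors give $G(F)=\coprod_{\lambda}G(O_F)\pi^{\lambda}G(O_F)$ over dominant cocharacters $\lambda=(\lambda_1\geq\cdots\geq\lambda_n)$, with $G(O_F)\pi^\lambda G(O_F)\subseteq \mathrm{Mat}_{n\times n}(O_F)$ exactly when $\lambda_n\geq 0$, in which case $\det\pi^\lambda$ has valuation $|\lambda|=\sum_i\lambda_i$. Hence $\{\,\mathbf{1}_{G(O_F)\pi^\lambda G(O_F)}:\lambda\vdash k,\ \ell(\lambda)\leq n\,\}$ is a $\mathbb{C}$-basis of $\mathcal{H}^{(k)}$, and $\mathcal{H}=\bigoplus_k\mathcal{H}^{(k)}$ is a graded subalgebra of the full spherical Hecke algebra because $\mathrm{Mat}_{n\times n}(O_F)$ is multiplicatively closed and the determinant is multiplicative. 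From the integral formula \eqref{eq:part_Satake} applied to $P=B=TU$ one has, for $f$ supported on $G(O_F)\pi^\lambda G(O_F)$,
\[
\mathcal{S}f(\pi^\mu)=\delta_B(\pi^\mu)^{1/2}\cdot\mathrm{vol}\{\,u\in U(F):\pi^\mu u\in G(O_F)\pi^\lambda G(O_F)\,\};
\]
inspecting the diagonal entries of $\pi^\mu u$ (with $u$ unipotent upper triangular) forces $\mu_i\geq 0$ and $|\mu|=|\lambda|=k$, so $\mathcal{S}$ maps $\mathcal{H}^{(k)}$ into the degree-$k$ homogeneous part of $\mathbb{C}[x_1,\dots,x_n]^{W}$ (it lands in the $W$-invariants by the normalization of $\mathcal{S}$). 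Now $\dim\mathcal{H}^{(k)}$ is the number of partitions of $k$ into at most $n$ parts, which equals $\dim\mathbb{C}[x_1,\dots,x_n]^{W,\deg=k}$ (basis: the monomial symmetric functions $m_\mu$, $\mu\vdash k$, $\ell(\mu)\leq n$). Since the usual Satake transform is injective and our $\mathcal{S}|_{\mathcal{H}}$ is its restriction, the dimension count gives that $\mathcal{S}\colon\mathcal{H}^{(k)}\to\mathbb{C}[x]^{W,\deg=k}$ is an isomorphism. Summing over $k$ and invoking Lemma \ref{lem:Satake_ring_hom} proves (1).

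\emph{Parts (2) and (3).} The condition defining $\mathbf{S}_k$ pins down a single double coset: $\pi O_F^n\subseteq gO_F^n\subseteq O_F^n$ forces every elementary divisor of $gO_F^n$ to be $\pi^0$ or $\pi^1$, and colength $k$ forces exactly $k$ of them to be $\pi^1$, so $\mathbf{S}_k=\mathbf{1}_{G(O_F)\pi^{(1^k,0^{n-k})}G(O_F)}$. As $(1^k,0^{n-k})$ is minuscule, the weights of the corresponding representation $\wedge^k(\mathrm{std})$ of the dual group form a single $W$-orbit, so the Satake transform has no lower-order terms: $\mathcal{S}(\mathbf{S}_k)=q^{\langle\rho,(1^k)\rangle}\,e_k$, where $e_k=m_{(1^k,0^{n-k})}$ is the $k$-th elementary symmetric polynomial. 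Since $\mathbf{S}_0=1$ and $e_1,\dots,e_n$ generate $\mathbb{C}[x]^{W}$ by the fundamental theorem of symmetric polynomials, part (1) shows $\{\mathbf{S}_i\}_{0\leq i\leq n}$ generates $\mathcal{H}$. For $\{\mathbf{T}_i\}_{0\leq i\leq n}$ one uses the classical Hecke-series identity in $\mathcal{H}[[X]]$,
\[
\Bigl(\sum_{k\geq 0}\mathbf{T}_k X^k\Bigr)\cdot\Bigl(\sum_{i=0}^{n}(-1)^i\,q_i\,\mathbf{S}_i X^i\Bigr)=1,
\]
with $q_0=1$ and each $q_i$ a positive integer power of $q$, which can be verified directly by a colength count of lattice chains, or simply read off after applying $\mathcal{S}$ using the previous formula. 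Inverting the power series expresses $\mathbf{S}_i$ as a polynomial in $\mathbf{T}_1,\dots,\mathbf{T}_i$ (the coefficient of $\mathbf{T}_i$ being the unit $\pm q_i^{-1}$), so $\mathbb{C}[\mathbf{T}_0,\dots,\mathbf{T}_n]\supseteq\mathbb{C}[\mathbf{S}_0,\dots,\mathbf{S}_n]=\mathcal{H}$, which is (2). For (3): $\mathbf{S}_{-n}=\mathbf{1}_{\pi^{-1}G(O_F)}$ is central with $\mathcal{S}(\mathbf{S}_{-n})=(x_1\cdots x_n)^{-1}$, inverse to $\mathcal{S}(\mathbf{S}_n)=x_1\cdots x_n=e_n$. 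Since $\mathbb{C}[x_1^{\pm},\dots,x_n^{\pm}]^{W}=\mathbb{C}[e_1,\dots,e_n][e_n^{-1}]$ (clear denominators by a power of $e_n=x_1\cdots x_n$), and this ring is the image of $\mathbb{C}[G(O_F)\backslash G(F)/G(O_F)]$ under the usual Satake isomorphism, adjoining $\mathbf{S}_{-n}$ to either generating set produced above yields the whole algebra.

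\emph{Main obstacle.} The crux is the input concerning $\mathcal{S}$ itself. Either one cites the classical Satake isomorphism (injectivity, and the identification of the image with $\mathbb{C}[x^{\pm}]^{W}$), together with the minuscule computation $\mathcal{S}(\mathbf{S}_k)=q^{\bullet}e_k$, and proves the Hecke-series identity directly; or one works out the Satake transform on the entire Cartan basis from \eqref{eq:part_Satake}, which is the standard but somewhat lengthy evaluation of the volumes $\mathrm{vol}\{u:\pi^\mu u\in G(O_F)\pi^\lambda G(O_F)\}$, equivalently counts of lattices in prescribed relative position. Once the minuscule case and the $e$–$h$ bookkeeping are available, the passage between the $\mathbf{S}_k$- and $\mathbf{T}_k$-generating sets and the localization step in (3) are purely formal.
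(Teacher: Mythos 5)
Your proposal is correct in outline and ends up with the same skeleton as the paper's argument (homogeneity of $\mathcal S$ on $\mathcal H^{(k)}$ from $\det(g\mathbf u)=\det g$, the Cartan double-coset count matching $\dim\mathbb C[x_1,\ldots,x_n]^{W,\deg=k}$, $\mathcal S(\mathbf S_k)$ a $q$-power times the elementary symmetric polynomial $e_k$, and a Newton-type inversion relating the $\mathbf S_i$ and $\mathbf T_m$), but the inputs are sourced differently. The paper is self-contained: Weyl-invariance is proved by rewriting $\mathcal S[f](g)$ as an orbital integral via the substitution $u\mapsto g^{-1}ugu^{-1}$; surjectivity in (1) is obtained from the explicit computation $\mathcal S[\mathbf S_k]=q^{(kn-k^2)/2}e_k$ (done by hand with the $U$-stable filtration, not by citing minuscule weight theory), and bijectivity then follows from the dimension count of Lem.~\ref{lem:surjectivity_lemma}; for the $\mathbf T_m$ the paper computes $\mathcal S[\mathbf T_m]=q^{m(n-1)/2}b_m$ by a short triangular integral over $U$ and then uses the purely polynomial identity $\sum_{i=0}^k(-1)^ib_is_{k-i}=0$ (Prop.~\ref{shengcheng}). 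You instead import injectivity of the classical Satake transform (and, for (3), the identification of its image with $\mathbb C[x_1^{\pm},\ldots,x_n^{\pm}]^{W}$); since the paper itself notes that (1) is well known, this is a legitimate shortcut, and your localization argument for (3) is fine — though (3) also follows more directly from (2), because every double coset lands in $\mathcal H$ after convolving with a power of $\mathbf S_n=[\pi]$.

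The one step you should not leave as stated is the Hecke-series identity $\bigl(\sum_{k\ge0}\mathbf T_kX^k\bigr)\bigl(\sum_{i=0}^n(-1)^iq_i\mathbf S_iX^i\bigr)=1$. Of your two suggested verifications, the quick one (``read off after applying $\mathcal S$'') presupposes knowing $\mathcal S(\mathbf T_m)$, which you never compute — and once you do compute it (the easy integral in the paper, giving $q^{m(n-1)/2}b_m$ and hence $q_i=q^{i(i-1)/2}$; note $q_1=1$, so ``positive power of $q$'' should read ``nonnegative''), the detour through a Hecke-algebra-level series is unnecessary: one transports the polynomial identity of Prop.~\ref{shengcheng} through the isomorphism already established in (1), exactly as the paper does. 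The alternative verification by a direct lattice-chain colength count is genuinely nontrivial and is only asserted. So the route is sound, but to make the proof complete you must supply either the computation of $\mathcal S(\mathbf T_m)$ or the combinatorial proof of the series identity.
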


Note that (3) follows immediately from (2). The proof of (1) and (2) will be given in the next few sections and will be complete after \S\ref{ss:proof_end}. Part (1) is actually well-known and may be found in more general form in \cite{Gross}, but we include a proof for convenience. We begin here with the observation that it is sufficient to prove surjectivity in (1):
\begin{lem}\label{lem:surjectivity_lemma}
Assume $\mcS$ maps $\mcH$ surjectively onto $\mbC[x_1,\ldots,x_n]^{W = \mathrm{id}}$. Then $\mcS:\mcH\iso \mbC[x_1,\ldots,x_n]^{W = \mathrm{id}}$ is bijective.
\end{lem}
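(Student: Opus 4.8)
The plan is to exploit the natural gradings on both sides. The target $\mbC[x_1,\ldots,x_n]^{W=\mathrm{id}}$ is graded by degree, and I would first check that $\mcS$ is graded, in the sense that $\mcS(\mcH^{(k)})\subseteq\mbC[x_1,\ldots,x_n]^{W=\mathrm{id},\,\deg=k}$. This is a support computation: a function $f\in\mcH^{(k)}$ is supported on the $\GL_n(O_F)$-biinvariant set $\{h\in\GL_n(F):v(\det h)=k\}$, which contains $G^{(k)}$; in the integral \eqref{eq:part_Satake} one has $\det(mu)=\det m$ for $m\in T(F)$ and $u\in U(F)$, so $\mcS f$ is supported on those $m\in T(F)/T(O_F)$ with $v(\det m)=k$, i.e.\ on $\diag(\pi^{m_1},\ldots,\pi^{m_n})$ with $m_1+\cdots+m_n=k$. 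Under \eqref{polynomials} these are exactly the monomials of total degree $k$, so $\mcS f$ is a polynomial homogeneous of degree $k$; being $W$-invariant by hypothesis, it lies in the space of degree-$k$ symmetric polynomials.

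Next I would observe that $\mcH^{(k)}$ and $\mbC[x_1,\ldots,x_n]^{W=\mathrm{id},\,\deg=k}$ are finite-dimensional of equal dimension. Via the Cartan decomposition (Smith normal form over $O_F$), each $\GL_n(O_F)$-double coset in $G^{(k)}$ has a unique representative $\diag(\pi^{a_1},\ldots,\pi^{a_n})$ with $0\leq a_1\leq\cdots\leq a_n$ and $a_1+\cdots+a_n=k$; hence the characteristic functions of these cosets, indexed by the partitions of $k$ with at most $n$ parts, form a $\mbC$-basis of $\mcH^{(k)}$. On the other hand the monomial symmetric polynomials, indexed by the very same set of partitions, form a $\mbC$-basis of $\mbC[x_1,\ldots,x_n]^{W=\mathrm{id},\,\deg=k}$. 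So both dimensions equal the number of partitions of $k$ into at most $n$ parts, and in particular are finite and equal.

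Finally, assuming surjectivity of $\mcS$, I would conclude as follows. Given a homogeneous symmetric polynomial $P$ of degree $k$, choose $f\in\mcH$ with $\mcS f=P$ and write $f=\sum_{j\geq0}f_j$ with $f_j\in\mcH^{(j)}$; since each $\mcS f_j$ is homogeneous of degree $j$ by the first step, comparing homogeneous components in $\mcS f=\sum_j\mcS f_j$ gives $\mcS f_k=P$. Thus the restriction $\mcS\colon\mcH^{(k)}\to\mbC[x_1,\ldots,x_n]^{W=\mathrm{id},\,\deg=k}$ is surjective, hence bijective because source and target have the same finite dimension, and taking the direct sum of these isomorphisms over $k\geq0$ yields the asserted isomorphism $\mcS\colon\mcH\iso\mbC[x_1,\ldots,x_n]^{W=\mathrm{id}}$. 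I do not expect a serious obstacle here; the only points needing care are the graded refinement of where $\mcS$ lands and the matching of the two partition-indexed bases, both of which are standard.
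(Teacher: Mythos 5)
Your proposal is correct and follows essentially the same route as the paper: the grading $\mcS(\mcH^{(k)})\subseteq\mbC[x_1,\ldots,x_n]^{\deg=k}$ via $\det(mu)=\det(m)$, the Cartan-decomposition count matching double cosets of $G^{(k)}$ with partitions of $k$ into at most $n$ parts, and the resulting equality of finite dimensions. You merely spell out the final degree-by-degree comparison that the paper leaves implicit, so there is nothing to correct.
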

\begin{proof}
Since $\det(g\cdot U)=\det(g)$, we have $\mathcal S(\mathcal H^{(k)})\subset \mathbb C[x_1,\cdots,x_n]^{\deg=k}$ for any integer $k$. By counting double cosets via the Cartan decomposition, 
\begin{multline*}
\#\left(G(O_F)\backslash G^{(k)}/G(O_F)\right)=\#\{k_1\leq k_2\leq\cdots\leq k_n: k_1+k_2+\cdots+k_n=k\}\\
=\dim \mathbb C[x_1,\cdots,x_n]^{W = \mr{id},\deg=k},
\end{multline*}
we conclude that the $\mathbb C$-vector spaces $\mathcal H^{(k)}$ and $\mathbb C[x_1,\cdots,x_n]^{W = \mathrm{id},\deg=k}$ are of the same dimension and the lemma follows.
\end{proof}

\subsection{Satake transformation in terms of orbital integrals}
In this section we prove the inclusion
\begin{equation}\label{Weyl}
\mathcal S \left(\mathcal H\right) \subset \mathbb C[x_1,\cdots,x_n]^{W=\mathrm{id}}.
\end{equation}

Let $U_{n-1}:= U$ and inductively define $U_{i-1}:=[U_i,U_i]$ as the derived subgroup, meaning the minimal normal subgroup such that $U_i/U_{i-1}$ is abelian. Explicitly,  
$$U_i=\left\{(a_{ij})_{n\times n}:a_{k,k}=1\text{ and }a_{k,k+j}=0 \text{ for } j<0 \text{ and for }0<j<n-i\right\}.$$
 Then 
$$U_i/U_{i-1}\cong\{(a_{1,1+n-i},a_{2,2+n-i},\cdots,a_{i,n})\}\cong  F^{\oplus i}$$
 and for any $t=\mathrm{diag}(t_1,\cdots,t_n)\in T$, the map 
$$
U\longrightarrow U;\qquad   u\longmapsto  t^{-1} u tu^{-1}
$$
induces linear transformations on $U_i/U_{i-1}$ with eigenvalues $\{t_k^{-1}t_{k+n-i}-1\}_{1\leq k\leq i}$. Assume that the $t_i$ are pairwise different. Then, denoting by $du$ the Haar measure on $U$, this implies that
$$
d(t^{-1}utu^{-1}) = \prod_{i=1}^{n-1}\prod_{k=1}^i |t_k^{-1}t_{k+n-i}-1|_F\cdot du=|\det t|_F^{-\frac{n-1}2}\cdot \delta_T(t)^{-\frac 12}\prod_{1\leq i<j\leq n}\left|t_i-t_j\right|_F du.
$$
Note that the product of $|t_i-t_j|_F$ for $1\leq i<j\leq n$ equals to $\left|\mathrm{Disc}(\mathrm{char}_t)\right|_F^{\frac12}$ where $\mathrm{char}_t(\lambda)=\det(\lambda\cdot\mathrm{id}-t)$ is the characteristic polynomial of $t$, and where $\mathrm{Disc}$ denotes the discriminant. We have
$$
\begin{aligned}
\mathcal S[f](t) &\ =\ \delta_T(t)^{\frac12}\int_{U(F)} f\left(t (t^{-1}utu^{-1})\right) d(t^{-1}utu^{-1})\\
&\ =\ |\det t|_F^{-\frac{n-1}2} \left|\mathrm{Disc}(\mathrm{char}_t)\right|_F^{\frac12}\int_{U(F)} f(utu^{-1})du.
\end{aligned}
$$
Since $\GL_n(F)=\mathrm{GL}_n(O_F) U(F) T(F)$ by the Iwasawa decomposition and since $f$ is $\GL_n(O_F)$-invariant, the occurring orbital integral is
$$
\int_{U(F)} f(utu^{-1})du = \int_{\GL_n(F)/T(F)} f(hth^{-1})dh.
$$
This expression only depends on $t$ up to conjugation. Therefore, the Satake transformation is Weyl-invariant as claimed in \eqref{Weyl}.

\subsection{Satake transformation of miniscule Hecke functions}
We next determine the images $\mcS[\mathbf S_k]$. Write $V_n=F^{n}$ and let $V_n\supset V_{n-1}\supset\cdots\supset V_0=\{0\}$ be the unique $U$-invariant filtration such that $V_i/V_{i-1}\cong F$. For any $t=\mathrm{diag}(t_1,t_2,\cdots,t_n)$ and any $u\in U$, the linear transformation $tu:V_n\longrightarrow V_n$ induces scaling by $t_i$ on $V_i/V_{i-1}$. Therefore, $O_F^{n}\supseteq tu O_F^{n}\supseteq \pi\cdot O_F^{n}$ implies that $t_i\in  O_F^\times\cup\pi\cdot O_F^\times$. This shows that
$$
\mathcal S[\mathbf S_k]=\sum_{\substack{I\subset \{1,\cdots,n\}\\\#I=k}}C_I\cdot\prod_{i\in I}x_i
$$ 
for some constants $C_I$. The polynomial on the right hand side is $W$-invariant by \eqref{Weyl}, so the constants $C_I$ are actually independent of $I$. 

Now we determine $C_{\{n-k+1,\cdots,n\}}$. Consider the element $t=\mathrm{diag}(1,1,\cdots,1,\pi,\cdots\pi)$ and note that $\mathbf S_k(tu)=1$ if and only if $u\in U(O_F)$. Therefore $\mathcal S[\mathbf S_k](t)=\delta_T(t)^{\frac 12}=q^{\frac{kn-k^2}2}$ which implies that
$$
\mathcal S[\mathbf S_k]=q^{\frac{kn-k^2}2}\sum_{\substack{I\subset \{1,\cdots,n\}\\\#I=k}}\prod_{i\in I}x_i
$$
is (a non-zero scalar multiple of) an elementary symmetric polynomial. It is well-known that these generate $\mathbb C[x_1,\cdots,x_n]^{W=\mathrm{id}}$, so
$$
\mathcal S (\mcH) = \mathbb C[x_1,\cdots,x_n]^{W=\mathrm{id}}.
$$
At this point, we have shown (2) of Thm. \ref{mubiao} for the set $\{\mathbf S_k\}_{0\leq k \leq n}$.

\subsection{Generating symmetric polynomials}
The next result provides an alternative generating set for the algebra of symmetric polynomials. Define
\begin{equation}\label{generators}
s_k(x_1,\cdots,x_n):=\sum_{\substack{I\subset \{1,2,\cdots,n\}\\\#I=k}}\prod_{i\in I}x_i\qquad \text{and}\qquad b_m(x_1,\cdots,x_n):=\sum_{\substack{m=m_1+\cdots+m_n\\m_1,\cdots,m_n\geq 0}}\prod_{i=1}^n x_i^{m_i}.
\end{equation}
It was already said and used that $\{s_k\}_{0\leq k\leq n}$ is a set of generators of $\mathbb C[x_1,\cdots,x_n]^{W=\mathrm{id}}$.

\begin{prop}\label{shengcheng}
The set $\{b_k(x_1,\cdots,x_n)\}_{0\leq k\leq n}$ generates $\mathbb C[x_1,\cdots,x_n]^{W=\mathrm{id}}$ as a $\mathbb C$-algebra.
\end{prop}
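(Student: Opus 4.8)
The plan is to prove the equality $\mathbb C[b_0,\dots,b_n]=\mathbb C[x_1,\dots,x_n]^{W=\mathrm{id}}$ by establishing both inclusions. One inclusion is immediate: the $b_m$ are the complete homogeneous symmetric polynomials, so each $b_m$ lies in the ring of symmetric polynomials $\mathbb C[x_1,\dots,x_n]^{W=\mathrm{id}}$, which by the paragraph preceding the proposition coincides with $\mathbb C[s_0,\dots,s_n]$; since also $b_0=1$, this gives $\mathbb C[b_0,\dots,b_n]\subseteq\mathbb C[s_0,\dots,s_n]$. The substance of the statement is therefore the reverse inclusion, i.e.\ expressing each elementary symmetric polynomial $s_1,\dots,s_n$ as a polynomial in $b_1,\dots,b_n$.

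For this I would use the classical reciprocity between the two families. In the formal power series ring $\mathbb C[x_1,\dots,x_n][[t]]$ one has
\[
\sum_{k=0}^{n}(-1)^k s_k\,t^k=\prod_{i=1}^n(1-x_i t),\qquad
\sum_{m\geq 0}b_m\,t^m=\prod_{i=1}^n\frac{1}{1-x_i t},
\]
the first being the defining expansion of the $s_k$, the second obtained by expanding each geometric series and collecting monomials of equal total degree. The product of the two left-hand sides is $1$, so comparing the coefficient of $t^m$ for each $m\geq 1$ yields Newton's recursion
\[
\sum_{k=0}^{\min(m,n)}(-1)^k s_k\,b_{m-k}=0 .
\]
For $1\leq m\leq n$, isolating the $k=m$ term and using $b_0=1$ rewrites this as
\[
s_m=(-1)^{m+1}\bigl(b_m-s_1 b_{m-1}+s_2 b_{m-2}-\dots+(-1)^{m-1}s_{m-1}b_1\bigr),
\]
so by induction on $m$ each $s_m$ is a polynomial in $b_1,\dots,b_m$. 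This is the reverse inclusion, and together with the first paragraph it proves $\mathbb C[b_0,\dots,b_n]=\mathbb C[x_1,\dots,x_n]^{W=\mathrm{id}}$, as asserted. (One could equally use the other form of the recursion, $b_m=\sum_{k=1}^m(-1)^{k+1}s_k b_{m-k}$ for $1\leq m\leq n$, to re-derive the first inclusion without invoking symmetry.)

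I do not expect any serious obstacle: this is a standard change of basis between the elementary and the complete homogeneous symmetric polynomials. The only point requiring care is the truncation convention $s_k=0$ for $k>n$ — equivalently, that the factor $\prod_{i=1}^n(1-x_i t)$ is a polynomial of degree exactly $n$ in $t$ — which is precisely what guarantees that Newton's recursion closes up on the finite families $\{s_1,\dots,s_n\}$ and $\{b_1,\dots,b_n\}$ and never forces a $b_m$ with $m>n$ to appear on the right-hand side of the displayed formula for $s_m$.
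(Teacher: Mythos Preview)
Your argument is correct and is essentially the same as the paper's: both establish the Newton-type recursion $\sum_{i=0}^{k}(-1)^i b_i s_{k-i}=0$ by multiplying the generating series for the $s_k$ with that for the $b_m$, then induct to express each $s_k$ in terms of $b_1,\dots,b_k$. The only cosmetic difference is that the paper works with the variable $X$ in $\prod_i(X-x_i)$ rather than your $t$ in $\prod_i(1-x_it)$.
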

\begin{proof}
Note that $b_0=s_0=1$. We claim that, for all $k\geq 1$,
\begin{equation}
\sum_{i=0}^k (-1)^ib_is_{k-i} = 0.
\end{equation}
To prove this, consider the polynomial expansion
$$
\prod_{i=1}^n(X-x_i) = \sum_{i=0}^n (-1)^i s_i X^{n-i}.
$$
and that of its reciprocal
$$
\prod_{i=1}^n(X-x_i)^{-1} = \sum_{i=0}^\infty b_i X^{-i-n}.
$$
The claim follows by multiplying these two equations together. The proposition now follows by induction from the identities $s_k=-\sum_{i=1}^{k}(-1)^ib_is_{k-i}$, $k\geq 1$.
\end{proof}

\subsection{Satake transformation of $\mathbf T_m$}
\label{ss:proof_end}
\begin{prop}\label{prop:Satake_T}
We have 
$$
\mathcal S[\mathbf T_m] =q^{\frac{m(n-1)}2} b_m(x_1,\cdots,x_n).
$$
\end{prop}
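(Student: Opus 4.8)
The plan is to compute $\mathcal S[\mathbf T_m]$ head-on (for $m\ge 0$): through the identification \eqref{polynomials} one regards $\mathcal S[\mathbf T_m]$ as a function on $T(F)/T(O_F)$, determines its value on each diagonal element $t=\diag(\pi^{a_1},\dots,\pi^{a_n})$ straight from the defining integral \eqref{eq:part_Satake}, namely $\mathcal S[\mathbf T_m](t)=\delta_T(t)^{1/2}\int_{U(F)}\mathbf T_m(tu)\,du$, and then reads off the coefficient of each monomial $x_1^{a_1}\cdots x_n^{a_n}$.

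For the integral, fix $a_1,\dots,a_n\in\mathbb Z$ and $t=\diag(\pi^{a_1},\dots,\pi^{a_n})$. For $u=(u_{ij})\in U$ (so $u_{ii}=1$ and $u_{ij}=0$ for $i>j$) the product $tu$ has diagonal entries $\pi^{a_i}$, entries $\pi^{a_i}u_{ij}$ above the diagonal, zeros below, and $\det(tu)=\pi^{a_1+\cdots+a_n}$. Hence $\mathbf T_m(tu)=1$ if and only if $a_i\ge 0$ for all $i$, $\sum_i a_i=m$, and $u_{ij}\in\pi^{-a_i}O_F$ for all $i<j$, and it is $0$ otherwise. Since $du$ is normalized by $U(O_F)$ and $[\pi^{-a_i}O_F:O_F]=q^{a_i}$, this gives $\int_{U(F)}\mathbf T_m(tu)\,du=\prod_{i<j}q^{a_i}=q^{\sum_i(n-i)a_i}$ on the support and $0$ elsewhere. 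On the other hand, conjugation by $t$ scales the coordinate $u_{ij}$ ($i<j$) by $\pi^{a_i-a_j}$, so specializing \eqref{eq:modular_character} to the partition $(1,\dots,1)$ gives $\delta_T(t)=q^{\sum_i(2i-1-n)a_i}$.

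Multiplying, for $a_i\ge 0$ with $\sum_i a_i=m$ one obtains $\mathcal S[\mathbf T_m](t)=q^{\sum_i(\frac{2i-1-n}{2}+(n-i))a_i}$; the exponent of each $a_i$ is identically $\frac{n-1}{2}$, so this collapses to $q^{\frac{n-1}{2}\sum_i a_i}=q^{\frac{m(n-1)}{2}}$, independent of the composition chosen. Feeding this back into \eqref{polynomials} yields
$$\mathcal S[\mathbf T_m]=q^{\frac{m(n-1)}{2}}\sum_{\substack{m=a_1+\cdots+a_n\\a_1,\dots,a_n\ge 0}}\prod_{i=1}^n x_i^{a_i}=q^{\frac{m(n-1)}{2}}\,b_m(x_1,\dots,x_n)$$
by the definition of $b_m$ in \eqref{generators}, which is exactly the assertion.

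I do not expect any genuine obstacle here: the argument is bookkeeping with valuations. The two points that need attention are matching the normalization conventions — the Haar measure on $U$ normalized by $U(O_F)$, and the normalization of $\delta_T$ via \eqref{eq:modular_character} — so that the powers of $q$ assemble precisely into $q^{m(n-1)/2}$; and the observation that the support condition ``$a_i\ge 0$, $\sum_i a_i=m$'' is verbatim the index set defining $b_m$ in \eqref{generators}, so that the resulting symmetric polynomial is $b_m$ and not, say, a Schur polynomial or a power sum.
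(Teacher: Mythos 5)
Your proposal is correct and is essentially the paper's own argument: evaluate $\mathcal S[\mathbf T_m]$ at $t=\diag(\pi^{a_1},\dots,\pi^{a_n})$, unfold the integral over $U$ into one-dimensional integrals giving $\prod_i q^{(n-i)a_i}$ on the support $a_i\ge 0$, $\sum_i a_i=m$, and combine with $\delta_T(t)^{1/2}$ so that the exponents collapse to $\tfrac{n-1}{2}\sum_i a_i$, identifying the result with $q^{m(n-1)/2}b_m$. Your write-up is in fact slightly more explicit than the paper's about the diagonal support condition $a_i\ge 0$ (and it uses $\delta_T^{1/2}$ correctly, where the paper's underbrace mislabels that factor as $\delta_T$), but there is no difference in method.
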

\begin{proof}
Consider a tuple $m_\bullet = (m_1,\ldots,m_n)$ of integers $m_i\geq 0$ and put $|m_\bullet|=m_1+\cdots+m_n$. Let
$$
t=\mathrm{diag}(\pi^{m_1},\pi^{m_2},\cdots,\pi^{m_n})\in T,\qquad u=(a_{ij})_{1\leq i,j\leq n}\in U.
$$
We have
$$\det(tu)=\det(t)=\pi^{|m_\bullet|}.$$
Note that $u\in U$ implies that $a_{ij}=0$ if $j<i$ and $a_{ii}=1$ for $1\leq i\leq n$. We have
$$
\begin{aligned}
\int_{U(F)} \mathbf T_m\left(
tu
\right)du
& \ =\ \mathbf 1_{\pi^m O_F^\times}\left(\pi^{|m_\bullet|}\right)\cdot\prod_{i=1}^n\prod_{j=i+1}^n \int_F \mathbf 1_{ O_F}(\pi^{m_i}a_{ij}) d a_{ij}\\
& \ =\ \mathbf 1_{\pi^m O_F^\times}\left(\pi^{|m_\bullet|}\right)\cdot\prod_{i=1}^n|\pi^{m_i}|^{-(n-i)}.
\end{aligned}
$$ 
Therefore, if $|m_\bullet| = m$, then
$$
\mathcal S[\mathbf T_m](t)=\underbrace{\prod_{i=1}^n |\pi^{m_i}|^{\frac{n+1}2-i}}_{\delta_T(t)}\cdot \prod_{i=1}^n|\pi^{m_i}|^{-(n-i)}
=\prod_{i=1}^n|\pi^{m_i}|^{\frac{1-n}2}
=|\pi^m|^{\frac{1-n}2},
$$
which implies
$$
\mathcal S[\mathbf T_m] = |\pi^m|^{\frac{1-n}2}\sum_{m=m_1+\cdots+m_n} \prod_{i=1}^nx_i^{m_i}=q^{\frac{m(n-1)}2} b_m(x_1,\cdots,x_n),
$$
where $b_m$ is the polynomial from \eqref{generators}. 
\end{proof}
Combining Prop. \ref{prop:Satake_T} with Prop. \ref{shengcheng} and the bijectivity of the Satake transformation proves that $\left\{\mathbf T_m\right\}_{0\leq m\leq n}$ is a set of generators for $\mathcal H$. The proof of Thm. \ref{mubiao} is now complete.

\newcommand{\Mat}{\mathrm{Mat}}
\subsection{Partial Satake transformation of $\mathbf T_m$}

We lastly come to the second result of this appendix, namely an explicit formula for the partial Satake transformation of $\mathbf T_m$. Suppose the partition in question is $n=n_1+n_2$ and put $g=(g_1,g_2)\in L = \GL_{n_1}(F)\times\GL_{n_2}(F)$. To avoid confusion, we write $\mathbf T_{\GL_n,m}$ for the function $\mathbf T_m$ on $GL_n(F)$.
\begin{prop}\label{explicit} For any integer $m\geq 0$, we have
$$
\mathcal S[\mathbf T_{\GL_n,m}]=\sum_{m_1+m_2=m}q^{\frac{m_1n_2+m_2n_1}2}\cdot\mathbf T_{\GL_{n_1},m_1}\boxtimes \mathbf T_{\GL_{n_2},m_2}.
$$
\end{prop}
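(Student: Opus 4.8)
The plan is to compute $\mathcal S[\mathbf T_{\GL_n,m}]$ directly from the integral formula \eqref{eq:part_Satake}. Since both sides are unchanged upon replacing $P$ by a conjugate, I would take $P = P_{(n_1,n_2)}$ to be the standard block upper triangular parabolic, so that $L = \GL_{n_1}(F)\times\GL_{n_2}(F)$ consists of block-diagonal matrices and $U$ of the matrices $u(X) = \bigl(\begin{smallmatrix} I_{n_1} & X \\ 0 & I_{n_2}\end{smallmatrix}\bigr)$ with $X\in\mathrm{Mat}_{n_1\times n_2}(F)$; the Haar measure $dX$ normalized so that $\mathrm{Mat}_{n_1\times n_2}(O_F)$ has volume one is exactly the measure on $U(F)$ used in \eqref{eq:part_Satake}. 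For $g = (g_1,g_2)\in L$ one has $g\,u(X) = \bigl(\begin{smallmatrix} g_1 & g_1X \\ 0 & g_2\end{smallmatrix}\bigr)$, and since $\mathbf T_{\GL_n,m}$ is bi-$\GL_n(O_F)$-invariant, $\mathbf T_{\GL_n,m}(g\,u(X)) = 1$ exactly when $g_1$, $g_2$ and $g_1X$ all have entries in $O_F$ and $v(\det g_1)+v(\det g_2) = m$.

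Next I would carry out the inner integral. For fixed $g=(g_1,g_2)$ with $g_1,g_2$ integral, the condition $g_1X\in\mathrm{Mat}_{n_1\times n_2}(O_F)$ describes the coset $g_1^{-1}\mathrm{Mat}_{n_1\times n_2}(O_F)$, whose volume — computed column-by-column, or by diagonalizing $g_1$ via Smith normal form — equals $|\det g_1|_F^{-n_2} = q^{n_2\,v(\det g_1)}$. Grouping by $m_i := v(\det g_i)\ge 0$ therefore gives
\begin{equation*}
\int_{U(F)}\mathbf T_{\GL_n,m}(g\,u(X))\,dX \;=\; \sum_{\substack{m_1+m_2=m\\ m_1,m_2\ge 0}} q^{n_2 m_1}\,\mathbf T_{\GL_{n_1},m_1}(g_1)\,\mathbf T_{\GL_{n_2},m_2}(g_2).
\end{equation*}
By \eqref{eq:modular_character}, the parabolic $P_{(n_1,n_2)}$ has modular character $\delta_L(g_1,g_2) = |\det g_1|_F^{n_2}\,|\det g_2|_F^{-n_1}$, so on the support of the $(m_1,m_2)$-summand $\delta_L(g)^{1/2} = q^{(-n_2 m_1 + n_1 m_2)/2}$; multiplying the summand by this factor produces the coefficient $q^{(-n_2m_1+n_1m_2)/2}\cdot q^{n_2m_1} = q^{(n_2m_1+n_1m_2)/2}$, which is precisely the exponent in the statement. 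Summing over all $m_1+m_2=m$ yields the claimed identity, the sum being finite since only the terms with $m_1,m_2\ge 0$ survive (and note that each summand is automatically bi-$\GL_{n_i}(O_F)$-invariant, so the answer lies in the Hecke algebra of $L$ as it must).

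I expect no serious obstacle here: the computation is elementary, and the one point that needs care is the bookkeeping that pairs the half-modulus $\delta_L(g)^{1/2}$ against the volume factor $q^{n_2\,v(\det g_1)}$ coming from the $U$-integral, which is exactly what produces the symmetric exponent $\tfrac12(m_1 n_2+m_2 n_1)$. For the stated generality of arbitrary $m\in\mathbb Z$, the case $m\ge 0$ treated above is the one actually used elsewhere (e.g. in \eqref{eq:Satake}); the remaining case follows either by running the same computation with $(g\,u(X))^{-1}$ in place of $g\,u(X)$, or formally from the $m\ge 0$ case together with Lemma \ref{lem:Satake_ring_hom} and Theorem \ref{mubiao}(3), which expresses $\mathbf T_{\GL_n,m}$ as a polynomial in the invertible element $\mathbf S_{-n}=1_{\pi^{-1}\GL_n(O_F)}$ and $\mathbf T_1,\dots,\mathbf T_n$, whose Satake transforms are already determined.
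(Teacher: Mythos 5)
Your computation is correct and follows essentially the same route as the paper's proof: unwind the defining integral over the unipotent radical of the standard $(n_1,n_2)$-parabolic, note that the support condition forces $g_1,g_2,g_1X$ integral with $v(\det g_1)+v(\det g_2)=m$, evaluate the inner integral as $|\det g_1|_F^{-n_2}$, and pair it with $\delta_L(g)^{1/2}$ to obtain the exponent $\tfrac12(m_1n_2+m_2n_1)$. Your closing remark on negative $m$ (via inverses or the algebra structure) is a harmless addition that the paper leaves implicit.
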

\begin{proof}
For any
$$\mathbf u:=\m{I_{n_1}}u,{}{I_{n_2}}.\in U_{(n_1,n_2)}(F),$$
we have 
$$
\mathbf T_{\GL_n,m}(g\cdot\mathbf u)=1\iff
\begin{cases}(g_1,g_2)\in\Mat_{n_1\times n_1}(O_F)\times \Mat_{n_2\times n_2}(O_F), \\
 g_1u\in \Mat_{n_1\times n_2}(O_F), \text{ and } \\
\det g_1\cdot \det g_2\in\pi^m O_F^\times.
\end{cases}
$$
Therefore, for any $g=(g_1,g_2)\in \Mat_{n_1\times n_1}(O_F)\times \Mat_{n_2\times n_2}(O_F)$ with $\det g_1\cdot \det g_2\in\pi^m O_F^\times$, we find
$$
\mathcal S(\mathbf T_m)(g)=
\frac{|\det(g_1)|_F^{\frac{n_2}2}}{|\det(g_2)|_F^{\frac{n_1}2}}\cdot
\int_{\Mat_{n_1\times n_2}(F)}\mathbf 1_{\Mat_{n_1\times n_2}(O_F)}(g_1\cdot u)du=|\det(g_1)|_F^{-\frac{n_2}2}\cdot |\det(g_2)|_F^{-\frac{n_1}2}
$$ as desired.
\end{proof}

\end{document}